\pgfplotsset{compat=1.13}
\newcommand\scalemath[2]{\scalebox{#1}{\mbox{\ensuremath{\displaystyle #2}}}}
\newcommand\xleftrightarrow[2][]{\ext@arrow 0099{\longleftrightarrowfill@}{#1}{#2}}
\def\longleftrightarrowfill@{\arrowfill@\leftarrow\relbar\rightarrow}
\let\emph\relax % there's no \RedeclareTextFontCommand
\DeclareTextFontCommand{\emph}{\bfseries\em}
\DeclareMathOperator{\Inv}{Inv}
\newtheorem{Thm}{Theorem}
\newtheorem{Prop}[Thm]{Proposition}
\newtheorem{Lem}[Thm]{Lemma}
\newtheorem{Cor}[Thm]{Corollary}
\theoremstyle{remark}
\newtheorem{Rem}[Thm]{Remark}
\newtheorem{Ex}[Thm]{Example}
\theoremstyle{definition}
\newtheorem{Def}[Thm]{Definition}
\title{Promotion and Cyclic Sieving on Rectangular $\delta$-semistandard Tableaux}
\author{Tair Akhmejanov and Bal\'azs Elek}
\begin{document}
\maketitle

%\date{}                                           % Activate to display a given date or no date

\begin{abstract}
Let $\delta=(\delta_1,\ldots,\delta_n)$ be a string of letters $h$ and $v$. We define a Young tableau to be $\delta$-semistandard if the entries are weakly increasing along rows and columns, and the entries $i$ form a horizontal strip if $\delta_i=h$ and a vertical strip if $\delta_i=v$. We define $\delta$-promotion on such tableaux via a modified jeu-de-taquin. The first main result is that $\delta$-promotion has period $n$ on rectangular $\delta$-semistandard tableaux, generalizing the results of Haiman and Rhoades for standard and semistandard tableaux. The second main result states that the set of rectangular $\delta$-semistandard tableaux for fixed $\delta$ and content $\gamma$ exhibits the cyclic sieving phenomenon with the generalized Kostka polynomial. To do so we follow Fontaine--Kamnitzer and associate to $(\delta,\gamma)$ an $SL_m$-invariant space $\Inv(V_{\lambda^1}\otimes\cdots\otimes V_{\lambda^n})$ where each $V_{\lambda^i}$ is an alternating or symmetric representation. We show that the Satake basis of the corresponding invariant space is indexed by the set of tableaux corresponding to $(\delta,\gamma)$ and is permuted by rotation of tensor factors. We then diagonalize the rotation action using the fusion product. This cyclic sieving generalizes the result of Rhoades, and of Fontaine-Kamnitzer (in type A), and is closely related to that of Westbury.
\end{abstract}

%\setcounter{tocdepth}{1} %stops at section, i.e. doesn't show subsection, subsubsection
%\addcontentsline{toc}{section}{Introduction}
\tableofcontents

%%%%%%%%%%%%%%%%%%%%%%%%%%%%%%%%%%%%%%%%%%%%%%%%%%%%
\section{Introduction}

In this paper we define $\delta$-semistandard tableaux where $\delta=(\delta_1,\ldots,\delta_n)$ is a binary string of letters $\bf h$ and $\bf v$. The letter $\delta_i$ specifies whether the $i$ entries of the tableau form a horizontal ($\bf h$) or vertical ($\bf v$) strip. The $\delta$-semistandard tableaux specialize to the usual semistandard tableaux when $\delta=(\bf h,\ldots,h)$. We generalize the Sch\"utzenberger promotion operator on semistandard tableaux to promotion on $\delta$-semistandard tableaux via a generalized jeu-de-taquin. The generalized $\delta$-promotion operator maps a $\delta$-semistandard tableau $T$ with content $\gamma=(\gamma_1,\ldots,\gamma_n)$ to a $R(\delta)$-semistandard tableau $\partial_\delta(T)$ with content $R(\gamma)$ where $R$ is rotation of strings defined by $R(\delta)=(\delta_2,\ldots,\delta_n,\delta_1)$. Our first main result is that $\delta$-promotion has period $n$ on the set of rectangular $\delta$-semistandard tableaux. This generalizes the well-known result for rectangular semistandard tableaux \cite{Haiman, Rho}. 

\begin{Thm}\label{thm:periodicity}
Fix an orientation string $\delta=(\delta_1,\ldots,\delta_n)$. For any rectangular $\delta$-semistandard tableau $T$ we have $\partial_{R^{n-1}(\delta)}\cdots\partial_{R(\delta)}\partial_\delta(T)=T$.
\end{Thm}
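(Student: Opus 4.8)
The plan is to prove the identity by first giving $\delta$-promotion a combinatorial life as a composite of elementary involutions and then laying out its iterates in a growth diagram whose rectangular boundary is forced to be periodic. Encode a rectangular $\delta$-semistandard tableau $T$ of shape $R$ and content $\gamma$ as a chain $\emptyset=\mu^0\subseteq\mu^1\subseteq\cdots\subseteq\mu^n=R$, where $\mu^i/\mu^{i-1}$ is the set of cells equal to $i$, a horizontal strip if $\delta_i=h$ and a vertical strip if $\delta_i=v$. I would first realize $\partial_\delta$ as a product $\sigma_{n-1}\cdots\sigma_2\sigma_1$ of $\delta$-switches, where each $\sigma_i$ is the modified jeu-de-taquin avatar of the Bender--Knuth involution: it exchanges the two adjacent strips $\mu^{i-1}\subseteq\mu^i\subseteq\mu^{i+1}$ for the unique intermediate shape $\tilde\mu^i$ with the strip types interchanged, and thereby sends a $\delta$-tableau to a tableau for the string with $\delta_i,\delta_{i+1}$ swapped. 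Applied in order, these switches move the strip of $1$'s to the top and automatically rotate $\delta\mapsto R(\delta)$ and $\gamma\mapsto R(\gamma)$, matching the jeu-de-taquin definition of $\partial_\delta$. With this description the theorem becomes the assertion that the full cycle $\sigma$-word obtained from $\partial_{R^{n-1}(\delta)}\cdots\partial_\delta$ acts trivially on chains.

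The next block of work is to establish the structural properties of these switches: that each $\sigma_i$ is well defined and involutive, that $\sigma_i\sigma_j=\sigma_j\sigma_i$ whenever $|i-j|\ge 2$ (far-apart strips do not interact), and that the associated $\delta$-evacuation $\varepsilon$ (iterated promotion reversing all values) is an involution carrying $\delta$-tableaux to $\delta^{\mathrm{rev}}$-tableaux and satisfying the dihedral relation $\varepsilon\,\partial\,\varepsilon=\partial^{-1}$. Because the $\delta$-modification mixes horizontal and vertical strips, none of these is inherited for free from ordinary jeu-de-taquin: each reduces to a confluence/commutation check for three consecutive strips, governed by the octahedron recurrence, and must be verified separately for each pattern of strip types $hh$, $hv$, $vh$, $vv$. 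I expect this local analysis to be the main obstacle, since it is exactly the place where the uniform behaviour of classical jeu-de-taquin could break down, and everything downstream rests on the switches being reversible with the octahedron symmetry intact.

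Rectangularity then enters through a single clean symmetry. I would lay out the iterates $T,\partial T,\partial^2 T,\ldots$ as the rows of a two-dimensional array of partitions $\mu^{k,i}$, whose unit plaquettes obey the involutive, associative local rules of $\delta$-jeu-de-taquin established above, and whose left and right columns are the constant boundaries $\mu^{k,0}=\emptyset$ and $\mu^{k,n}=R$ for every $k$. For a rectangular shape $R$ the $180^\circ$ rotation is a symmetry of the diagram, and rotating a tableau while complementing values $i\mapsto n+1-i$ preserves strip types and carries a $\delta$-tableau to a $\delta^{\mathrm{rev}}$-tableau; generalizing the classical fact that evacuation equals this rotate-and-complement map on rectangles, I would identify $\varepsilon$ with a concrete, manifestly order-two operation $\rho$. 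The constancy of the boundary columns together with the reflection symmetry supplied by $\rho$ pins down the dihedral data and forces the reversible array to be periodic with period $n$ in the promotion direction, which is precisely $\partial_{R^{n-1}(\delta)}\cdots\partial_\delta=\mathrm{id}$.

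Two remarks guide the write-up. The delicate second point, after the local octahedron analysis, is extracting the exact period from rectangularity: I would phrase it so that the rotate-and-complement symmetry $\rho$ makes the $n$-fold composite unwind to a full rotation of the cyclically arranged content slots that visibly returns to its start, rather than relying on a generic dihedral-order computation. Conceptually one should expect period $n$ because this combinatorics shadows the rotation of $n$ tensor factors in the Fontaine--Kamnitzer framework used later for the cyclic sieving result, where $n$-periodicity is automatic; carrying out the argument purely combinatorially, however, keeps Theorem~\ref{thm:periodicity} self-contained and independent of that representation-theoretic input.
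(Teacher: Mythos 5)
Your first half---encoding $T$ as a chain of strips, decomposing $\partial_\delta$ into adjacent Bender--Knuth-type switches, and reducing the well-definedness of the mixed switches to an octahedron-recurrence case analysis over the patterns $hh$, $hv$, $vh$, $vv$---is essentially the paper's own route (Corollary \ref{cor:BK decomp} together with Proposition \ref{prop:main}), and you correctly identify the mixed case as the technical core. The genuine gap is in your final step. The dihedral data you propose, namely $\varepsilon^2=\mathrm{id}$, $\varepsilon\,\partial\,\varepsilon=\partial^{-1}$, and the identification of $\varepsilon$ with rotate-and-complement $\rho$ on rectangles, does not force $\partial$ to have order dividing $n$: a dihedral group is consistent with the rotation generator having any order whatsoever. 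The classical argument needs one more identity, of the form $\partial^n=\varepsilon^*\circ\varepsilon$ (the $n$-fold promotion equals dual evacuation composed with evacuation), which holds for \emph{every} shape and whose proof---via the evacuation growth triangle or Haiman's dual equivalence---is the substantive content; rectangularity enters only afterwards, through $\varepsilon=\varepsilon^*=\rho$. Your substitute claim, that ``constancy of the boundary columns together with the reflection symmetry forces the reversible array to be periodic with period $n$,'' fails as stated: for a non-rectangular shape $\lambda$ the very same array of promotion iterates has constant boundary columns $\emptyset$ and $\lambda$ and perfectly reversible, involutive plaquettes, yet promotion typically has order far exceeding $n$. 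Similarly, ``the $180^\circ$ rotation is a symmetry of the diagram'' is precisely what must be proved, not an available input; and in the $\delta$-setting even stating the dihedral relation requires care, since $\varepsilon$ reverses the orientation string while $\partial$ rotates it, so the operators act between different sets of tableaux.

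By contrast, the paper obtains the wrap-around with no evacuation identity at all: the entire promotion array is realized inside a single $(n+1)$-hive, whose boundary weights $\alpha_{ij}$ are defined for indices modulo $n+1$ and satisfy $\alpha_{j,i+n+1}=\alpha_{ij}^*$ automatically; Proposition \ref{prop:main} (the break-path excavation analysis) shows each unit square of the resulting staircase is a $\delta$-BK move, and rectangularity enters only through Lemma \ref{lem:rectangle}, which lets the dual half of the staircase be complemented and merged so that $\alpha_{1j}=\alpha_{1+n+2,\,j+n+2}$, i.e.\ period $n$. If you wish to salvage your evacuation route, you would need to (a) define $\delta$-evacuation and prove $\varepsilon\,\partial_\delta\,\varepsilon=\partial^{-1}$ with the orientation strings correctly intertwined, (b) prove the $\delta$-analogue of $\partial^n=\varepsilon^*\circ\varepsilon$ via a triangular growth array whose consistency is again your local octahedron analysis, and (c) prove $\varepsilon=\rho$ on rectangular $\delta$-tableaux; each of (b) and (c) is comparable in difficulty to the hive argument it would replace, and none follows from the involutivity and commutation properties you list.
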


To prove this we define a generalized Bender--Knuth involution when $\delta_i\not=\delta_{i+1}$. The proof use Knutson--Tao hives, which can also be interpreted as Fock--Goncharov coordinates \cite{FG}. 

The second main result is an instance of the cyclic sieving phenomenon for the set of rectangular $\delta$-semistandard tableaux with fixed orientation $\delta$ and content $\gamma$, generalizing the result of Rhoades \cite{Rho} and of Fontaine--Kamnitzer \cite{FK} (when the results of \cite{FK} are restricted to type A). Note that Fontaine--Kamnitzer worked in the setting of any simply-laced group $G$, but we are only concerned with $G=SL_m$. Our result is closely related to that of Westbury \cite{Wes}, and an alternate proof using the techniques of \cite{Wes} is discussed in Remark \ref{rem:Westbury} below. Westbury \cite{Wes} proved a general cyclic sieving on the set of invariant crystal elements in a tensor power $B^{\otimes n}$ of a crystal $B$ of a semi-simple Lie algebra (note that $B$ is not assumed to be connected). See also \cite{WesLoc} for an interpretation in terms of crystal local rules. 

To briefly recall cyclic sieving, let $X$ be a set with an action of the cyclic group $C_l=\langle c\rangle$, $\zeta$ a primitive $l$th root of unity, and $f(x)$ a polynomial. Then the triple $\left(X, c,f(x)\right)$ exhibits the cyclic sieving phenomenon if $f(\zeta^d)=\lvert X^{c^d}\rvert$ for all integers $d\geq 0$ where $X^{c^d}$ denotes the fixed-point set of $c^d$. See \cite{RSW, Sag} for details.

Fix orientation and content strings $\delta=(\delta_1,\ldots,\delta_n)$ and $\gamma=(\gamma_1,\ldots,\gamma_n)$, and let $r$ be the smallest positive integer such that $R^r(\delta)=\delta$ and $R^r(\gamma)=\gamma$. Set $l=n/r$ and $\lvert \gamma\rvert=\sum \gamma_i$. Let $RT_{m}(\delta,\gamma)$ denote the set of $\delta$-semistandard, $(m\times \frac{\lvert\gamma\rvert}{m})$-rectangular tableaux with content $\gamma$. Let the cyclic group $C_l$ act on $RT_m(\delta,\gamma)$ with generator $c=\partial_{R^{r-1}(\delta)}\cdots\partial_\delta$. To $(\delta,\gamma)$ associate the sequence of row and column partitions $\vec\lambda=(\lambda^1,\ldots,\lambda^n)$ where $\lambda^i$ is a row (resp. column) of size $\gamma_i$ if $\delta_i=\bf h$ (resp. $\delta_i=\bf v$). Let $\mu$ denote the $(m\times \frac{\lvert \gamma\rvert}{m})$-rectangular partition and $K_{\mu,\vec{\lambda}}(q)$ the generalized (cocharge) Kostka polynomial of \cite{KS,SS,SW}.

We show that the techniques of Fontaine, Kamnitzer \cite{FK} generalize easily to establish the following theorem. 
\begin{Thm}\label{thm:csp}
The following triple exhibits the cyclic sieving phenomenon.
\begin{align*}
\left(RT_{m}(\delta,\gamma), \partial_{R^{r-1}(\delta)}\cdots\partial_\delta, q^{\langle\lvert\vec\lambda\rvert,\rho\rangle}K_{\mu,\vec{\lambda}}(q)\right)
\end{align*}
\end{Thm}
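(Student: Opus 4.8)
The plan is to follow the Fontaine–Kamnitzer strategy of realizing the combinatorial set as a basis of an invariant space, showing rotation-equivariance, and then diagonalizing the rotation operator via a fusion/q-deformation argument. Let me sketch the main steps.

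First, establish the representation-theoretic dictionary. To the pair $(\delta,\gamma)$ I associate the tensor product $V_{\vec\lambda} = V_{\lambda^1}\otimes\cdots\otimes V_{\lambda^n}$ of $SL_m$-representations, where each $V_{\lambda^i}$ is a symmetric power (if $\delta_i = \mathbf{h}$) or an alternating/exterior power (if $\delta_i = \mathbf{v}$). The key combinatorial input is a bijection between $RT_m(\delta,\gamma)$ and a natural basis of the invariant space $\mathrm{Inv}(V_{\vec\lambda}) = (V_{\vec\lambda})^{SL_m}$. As promised in the abstract, this is the Satake basis: I would invoke the geometric Satake correspondence to index a basis of the invariant space by components of a convolution variety / MV-cycles, and match these against the rectangular $\delta$-semistandard tableaux. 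The dimension count here is the generalized Kostka number, so $|RT_m(\delta,\gamma)| = K_{\mu,\vec\lambda}(1) = \dim\mathrm{Inv}(V_{\vec\lambda})$, which is forced to be consistent.

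Second, I want to match the cyclic action. The cyclic group $C_l$ acts on $RT_m(\delta,\gamma)$ by the promotion operator $c = \partial_{R^{r-1}(\delta)}\cdots\partial_\delta$; on the representation side there is the rotation-of-tensor-factors operator on $\mathrm{Inv}(V_{\vec\lambda})$. The crucial geometric claim is that the Satake basis is \emph{permuted} by the rotation operator, and that under the bijection of the previous step, this permutation is intertwined with $\delta$-promotion. This is where I would lean heavily on the periodicity Theorem~\ref{thm:periodicity}: periodicity of promotion guarantees that $c$ has order dividing $l$, so both sides carry genuine $C_l$-actions, and the compatibility of the Satake rotation with jeu-de-taquin promotion (an analogue of the classical fact that promotion corresponds to rotation of the associated web/invariant) is what transfers one cyclic action to the other.

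Third, diagonalize. Rather than compute fixed points of $c^d$ combinatorially, I would compute the trace of the rotation operator on $\mathrm{Inv}(V_{\vec\lambda})$ by passing to the fusion product: the $q$-analogue replaces the rotation eigenvalues with powers of a root of unity, and the graded trace is exactly the generalized Kostka polynomial $K_{\mu,\vec\lambda}(q)$ up to the normalizing shift $q^{\langle|\vec\lambda|,\rho\rangle}$. Evaluating this graded character at $q=\zeta^d$ recovers $|X^{c^d}|$ by the general cyclic-sieving-from-representation-theory principle (the eigenvalues of $c$ are roots of unity whose multiplicities are read off from the $q$-graded dimension). The main obstacle I anticipate is precisely the equivariance in the second step: proving that the Satake basis is permuted by rotation and that this permutation matches $\delta$-promotion requires a careful identification of the promotion operator with the geometric rotation, generalizing the Fontaine–Kamnitzer argument from the standard/semistandard case to the mixed symmetric/alternating setting dictated by $\delta$; verifying the fusion-product computation of the graded trace yields the correctly-shifted Kostka polynomial is the essential bookkeeping but should go through once the equivariance is in hand.
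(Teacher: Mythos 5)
Your proposal follows essentially the same route as the paper: identify $RT_m(\delta,\gamma)$ with the Satake basis of $\Inv(V_{\lambda^1}\otimes\cdots\otimes V_{\lambda^n})$ via geometric Satake, show rotation of tensor factors permutes that basis in agreement with $\delta$-promotion (up to the sign $(-1)^{\langle\lambda^1,2\rho\rangle}$, which the paper tracks and which your ``normalizing shift'' bookkeeping absorbs), and diagonalize $R^r$ on the fusion product so that its trace at the $k$th graded piece is $\zeta^k$, yielding $q^{\langle\lvert\vec\lambda\rvert,\rho\rangle}K_{\mu,\vec\lambda}(q)$. You also correctly locate the genuine work in the equivariance step, which the paper carries out by indexing components through Goncharov--Shen hives (Proposition \ref{prop:main} and Theorem \ref{thm:geomrotsymalt}) and by embedding $\Inv(\vec\lambda)$ into a minuscule sequence's invariant space to import Theorem \ref{thm:FK}.
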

Here $\rho$ is the half-sum of positive roots of $\mathfrak{sl}_m$, $\lvert \vec\lambda\rvert=\sum \lambda^i$, and we interpret partitions as dominant $\mathfrak{sl}_m$ weights. Note that the generalized Kostka polynomial is equivalent to the graded multiplicity of the trivial representation in the fusion product of symmetric and alternating representations of $\mathfrak{sl}_m$, as proved in \cite{AKS}. 

We briefly summarize the proof. A typical method to establish cyclic sieving involves two steps. Firstly, one defines a vector space $W$ with a basis labelled by $X$ and a linear automorphism $\overline{c}$ that permutes the basis in agreement with the action of $c$ on $X$. Secondly, one computes the traces of the diagonalizable operators $\overline{c}^d$ for $d\geq 0$, which give the sizes of the fixed-point sets $X^{c^d}$. 

Fix $m,\delta,\gamma$ and the corresponding sequence $\vec\lambda$. Let $V_\lambda$ be the irreducible representation of $\mathfrak sl_m$ associated to the dominant weight $\lambda$. As in \cite{FK}, we let $W$ be the space of invariants $\Inv(\vec\lambda)=\Inv(V_{\lambda^1}\otimes\cdots\otimes V_{\lambda^n})$ and $\overline{c}$ the linear operator given by rotation of tensor factors. By the Pieri rule, or more generally the Littelmann path model, the cardinality of $RT_m(\delta,\gamma)$ is equal to the dimension of this invariant space. We show that the Satake basis is labelled by $\delta$-semistandard tableaux of content $\gamma$ and that rotation of tensor factors permutes this basis according to $\delta$-promotion (up to global sign). We do so using Knutson--Tao hives, or equivalently Fock--Goncharov coordinates, together with a result of Goncharov--Shen \cite{GS}. That the Satake basis is indexed by $\delta$-semistandard tableaux for the case $\delta=(\bf v,\cdots,v)$ was shown in \cite{FKK} by more direct methods. The recent preprint \cite{BGL20} also shows that the Satake basis for arbitrary dominant weights is permuted by rotation.

The rotation action can be diagonalized by considering the fusion product to show that the trace of $\overline{c}^d$ is equal to $K_{\mu,\vec \lambda}(\zeta^d)$, precisely as in \cite[\S4.2]{FK} and \cite[\S8.3]{Wes}. Combining these two results establishes the cyclic sieving phenomenon. We observe that the proofs go through for an arbitrary sequence of dominant weights, yielding a cyclic sieving phenomenon on hives, but we are not aware of an effective method for computing the corresponding graded multiplicity polynomial defined by the fusion product.

In \S2 we give the main definitions of $\delta$-promotion and the generalized Bender--Knuth involution. In \S3 we prove Theorem \ref{thm:periodicity} and in \S4 we prove Theorem \ref{thm:csp}. Below are two examples where we let $\sigma_i$ and $\omega_i$ denote the $\mathfrak{sl}_m$ weights corresponding to the $i$th symmetric and alternating representations.

\begin{Ex}
Let $\delta=\bf(h,v,h,v,h,v)$ and $\gamma=(2,2,2,2,2,2)$, so that $\vec\lambda=(\sigma_2,\omega_2)^3$, $r=2$, $l=3$, $C_3=\langle c\rangle=\langle\partial_{R(\delta)}\circ\partial_\delta\rangle$ is the cyclic group of order $3$, $\zeta$ is a primitive $3$rd root of unity, and $\mu=(4,4,4)$. Then $\Inv((V_{\sigma_2}\otimes V_{\omega_2})^{\otimes 3})$ is $6$ dimensional. The $c$-orbits of the corresponding $\delta$-semistandard tableaux of shape $(3\times 4)$ are shown below. There's one orbit of size $3$ and three orbits of size $1$.
\ytableausetup{boxsize=3.5mm}
\ytableausetup{centertableaux}
\begin{align*}
\begin{ytableau}1&1&2&5\\2&3&4&6\\3&4&5&6\end{ytableau}\mapsto&\begin{ytableau}1&1&2&3\\2&3&4&6\\4&5&5&6\end{ytableau}\mapsto\begin{ytableau}1&1&3&4\\2&3&4&6\\2&5&5&6\end{ytableau}
\hspace{15mm}\begin{ytableau}1&1&3&5\\2&3&4&6\\2&4&5&6\end{ytableau}
\hspace{15mm}\begin{ytableau}1&1&2&4\\2&3&4&6\\3&5&5&6\end{ytableau}
\hspace{15mm}\begin{ytableau}1&1&2&4\\2&3&3&6\\4&5&5&6\end{ytableau}
\end{align*}
The generalized Kostka polynomial is $K_{\mu,\vec{\lambda}}(q)=q^6+q^8+2q^9+q^{10}+q^{12}$. Then $\langle\lvert\vec{\lambda}\rvert,\rho\rangle=9$, so for $f(q)=q^{9}K_{\mu,\vec{\lambda}}$ one gets $f(1)=6,f(\zeta)=3,f(\zeta^2)=3$, as desired. Below we show the jeu-de-taquin steps for a single application of $\delta$-promotion of the first tableau, resulting in a $R(\delta)$-semistandard tableau.
\begin{align*}
\begin{ytableau}\cdot&\cdot&2&5\\2&3&4&6\\3&4&5&6\end{ytableau}\mapsto&\begin{ytableau}\cdot&2&\cdot&5\\2&3&4&6\\3&4&5&6\end{ytableau}\mapsto\begin{ytableau}\cdot&2&4&5\\2&3&\cdot&6\\3&4&5&6\end{ytableau}\mapsto\begin{ytableau}\cdot&2&4&5\\2&3&5&6\\3&4&\cdot&6\end{ytableau}\mapsto\begin{ytableau}\cdot&2&4&5\\2&3&5&6\\3&4&6&\cdot\end{ytableau}\\
\mapsto&\begin{ytableau}2&\cdot&4&5\\2&3&5&6\\3&4&6&\cdot\end{ytableau}\mapsto\begin{ytableau}2&3&4&5\\2&\cdot&5&6\\3&4&6&\cdot\end{ytableau}\mapsto\begin{ytableau}2&3&4&5\\2&4&5&6\\3&\cdot&6&\cdot\end{ytableau}\mapsto\begin{ytableau}2&3&4&5\\2&4&5&6\\3&6&\cdot&\cdot\end{ytableau}\mapsto\begin{ytableau}1&2&3&4\\1&3&4&5\\2&5&6&6\end{ytableau}
\end{align*}
\end{Ex}

\begin{Ex}
Let $\delta=\bf(h,v,h,v,h,v,h,v)$ and $\gamma=(2,3,2,3,2,3,2,3)$. There are $24$ $\delta$-semistandard $(4\times 5)$-tableaux of content $\gamma$ on which $C_4=\langle \partial_{R(\delta)}\circ\partial_\delta\rangle$ acts. There are $4$ orbits of size $4$, $2$ orbits of size $2$, and $4$ orbits of size $1$. Computing via rigged configurations or the cyclage poset (in \cite{SageMath} or otherwise), the generalized Kostka polynomial is
\begin{align*}
K_{(5,5,5,5),\vec\lambda}(q)=q^{10}+q^{12}+2q^{13}+4q^{14}+2q^{15}+4q^{16}+2q^{17}+4q^{18}+2q^{19}+q^{20}+q^{22},
\end{align*}
so the cyclic sieving polynomial is $f(q)=q^{18}K_{(5,5,5,5),\vec\lambda}(q)$. Plugging in powers of a primitive $4$th root of unity $\zeta$, one gets $f(1)=24$, $f(\zeta)=4$, $f(\zeta^2)=8$, $f(\zeta^3)=4$, as desired. The corresponding $24$-dimensional invariant space of $\mathfrak{sl}_4$ is $\Inv((V_{\sigma_2}\otimes V_{\omega_3})^{\otimes 4})$.
\end{Ex}

\begin{Rem}\label{rem:Westbury}
With some additional discussion, Theorems 1 and 2 should follow from the work of Westbury \cite{Wes,WesLoc} when combined with that of Lenart \cite{Len} and van Leeuwen \cite{vLee}. This alternate proof is in the setting of crystals.

Recall that the Littelmann path model gives a model for crystals. As already alluded to, $\delta$-tableaux can be viewed as encoding dominant Littelmann paths for a tensor product of symmetric and alternating representations of $\mathfrak{sl}_m$. In particular, the rectangular $\delta$-semistandard tableaux correspond to the isolated crystal elements. The category of $\mathfrak{g}$-crystals has a coboundary structure given by the crystal commutor \cite{HK}, and one can define ``crystal promotion'' on a tensor product of crystals in terms of the commutor (see \cite[\S6.3-4]{Wes} and \cite[\S2.3]{FK}). Westbury showed in \cite{Wes} that the crystal promotion operator has order $n$ on the isolated crystal elements.

One then needs to show that this action of the crystal promotion operator on the invariant crystal elements, when encoded in terms of tableaux, agrees with the $\delta$-promotion operator defined here. One way to do this is to decompose the commutor in terms of local rules, as described in \cite{WesLoc}. Lenart \cite{Len} showed that the rule appearing in \cite{vLee} is the local rule for a tensor product of minuscule representations. This local rule when interpreted in terms of row-strict tableaux in type A is the (transpose) Bender--Knuth involution. The generalized Bender--Knuth involution given here should be the local rule for a tensor product of symmetric and alternating representations of $\mathfrak{sl}_m$.
\end{Rem}

\subsection{Acknowledgements}
The first author would like to thank Pavlo Pylyavskyy and David Speyer for independently suggesting to incorporate rows to the methods of \cite{Akh}. The authors thank Joel Kamnitzer, Allen Knutson, and Bruce Westbury for helpful comments and discussions.

\section{Definitions and Generalizations to $\delta$-semistandard Tableaux}
\subsection{$\delta$-semistandard Tableaux}
We use the English notation for partitions and tableaux, so that a Young diagram is a left-justified collection of boxes, or cells, with weakly decreasing row lengths from top down. Define an \emph{orientation string} to be a sequence $\delta=(\delta_1,\ldots,\delta_n)$ such that for all $i$ either $\delta_i=\bf h$ or $\delta_i=\bf v$, where the $\bf h$ and $\bf v$ stand for horizontal and vertical respectively. A \emph{horizontal strip} in a tableau is a subset of cells such that no two cells are contained in the same column. Similarly, a \emph{vertical strip} is a subset of cells no two of which are contained in the same row. 
\begin{Def}
Let $\delta$ be an orientation string. A tableau $T$ is a \emph{$\delta$-semistandard tableau} if the entries are weakly increasing along the rows and down columns, and the cells of $T$ that contain an $i$ form a horizontal strip if $\delta_i=\bf h$ and a vertical strip if $\delta_i=\bf v$. 
\end{Def}
The $(\bf h,\ldots,h)$-semistandard tableaux are the usual semistandard tableaux whose entries are weakly increasing along rows and strictly down columns. Similarly, the $(\bf v,\ldots,v)$-semistandard tableaux are row-strict semistandard tableaux whose entries weakly increase down columns and strictly increase along rows. The \emph{content} of a tableau $T$ is the string $\gamma=(\gamma_1,\ldots,\gamma_n)$ where $\gamma_i$ is equal to the number of cells with entry $i$ in $T$. Let $\lvert\gamma\rvert$ denote the sum $\sum \gamma_i$. The pair $(\delta,\gamma)$ can be thought of together as the orientation-content pair. Let $RT_m(\delta,\gamma)$ denote the set of $\left(m\times \frac{\lvert\gamma\rvert}{m}\right)$-rectangular $\delta$-semistandard tableaux with content $\gamma$. 

\subsection{Promotion}
To briefly recall promotion, let $T$ be a $(\bf h,\ldots,h)$-semistandard tableau. Replace every $1$ entry by a dot. Choose the rightmost dot and apply outwards jeu-de-taquin slides as follows. Let $a$ and $b$ denote respectively the entries of the cell directly below and directly to the right of the dotted cell. If $b>a$, then slide the dot one cell to the right and fill the cell where the dot used to be with the entry $b$. If $a\leq b$, then slide the dot one cell down and fill the previously dotted cell with the entry $a$. If there is no cell directly below (resp. to the right), then perform a rightward (resp. downward) slide. If $T$ does not contain an undotted cell directly to the right nor below the current dotted cell, then this completes the jeu-de-taquin slides for this dot. 

Repeat this procedure for each of the dots, applying the above jeu-de-taquin slides to the rightmost dot each time. When all dots are in the bottom-right corner of the tableau replace each dot with $n+1$ and subtract $1$ from each entry, resulting to get the promotion tableau $\partial(T)$. The result is again a semistandard tableau since the jeu-de-taquin sliding rules preserve the semistandard condition. The content of $\partial(T)$ is $R(\gamma)=(\gamma_2,\ldots,\gamma_n,\gamma_1)$.

Note that our definition of promotion agrees with the definition in \cite{Sta} and our promotion is the inverse of the promotion operator defined in in \cite{Rho}.

We now alter the above definition of jeu-de-taquin to preserve the $\delta$-semistandard condition for an arbitrary orientation string $\delta$.

\begin{Def}
Fix an orientation string $\delta$ and let $T$ be a $\delta$-semistandard tableau. The $\delta$-\emph{jeu-de-taquin} is defined as follows. Consider a dotted cell in $T$ with entry $a$ in the cell directly below and $b$ in the cell directly to the right. If $a<b$ or $a>b$, then apply the usual jeu-de-taquin rule by sliding the dot into the cell containing the smaller value of $a$ and $b$. If $a=b$, then exchange the dot with the entry of the cell directly below if $\delta_a=\bf h$ and with the cell to the right $b$ if $\delta_a=\bf v$.

To define $\delta$-\emph{promotion} on $T$ replace all of the $1$ entries with dots. If $\delta_1=\bf h$ then this will be a row of dots, and if $\delta=\bf v$ then this will be a column of dots. Perform $\delta$-jeu-de-taquin slides to the dots, starting with the rightmost dot (resp. lowest dot) if $\delta_1=\bf h$ (resp. $\delta_1=\bf v$). Once all of the dots are in a row or column in the bottom-rightmost portion of the tableau, replace each dot with $n+1$ and subtract $1$ from each entry to get the $\delta$-promotion tableau $\partial_\delta(T)$.
\end{Def}

\begin{Ex}
Let $\delta=(\bf h,v,h,v,h,v,h,v)$, $\gamma=(2,3,2,3,2,3,2,3)$, so that the following tableau $T$ is rectangular $\delta$-semistandard with content $\gamma$. Its promotion tableau $\partial_\delta(T)$ has orientation and content pair $R(\delta)=(\bf v,h,v,h,v,h,v,h)$, $R(\gamma)=(3,2,3,2,3,2,3,2)$. Note that $T$ can also be interpreted as a $\delta'$-semistandard tableau where $\delta=(\bf h,v,h,h,h,v,h,v)$, in which case $\delta'$-promotion gives a different tableau with a different orientation string $R(\delta')=(\bf v,h,h,h,v,h,v,h)$.
\begin{align*}
\ytableausetup{boxsize=3.5mm}
T=\begin{ytableau}1&1&2&3&4\\2&3&4&6&8\\2&4&5&6&8\\5&6&7&7&8\end{ytableau} \hspace{20mm} \partial_\delta(T)=\begin{ytableau}1&2&2&3&7\\1&3&4&5&7\\1&3&5&6&7\\4&5&6&8&8\end{ytableau}\hspace{20mm} \partial_{\delta'}(T)=\begin{ytableau}1&2&2&3&7\\1&3&3&5&7\\1&4&5&6&7\\4&5&6&8&8\end{ytableau}
\end{align*}
\end{Ex}

\subsection{BK Involutions}
To prove our main theorem regarding the periodicity of $\delta$-promotion on rectangular tableaux we will need the Bender--Knuth involutions.

Recall that if $T$ is semistandard in the usual sense, then promotion can be decomposed into a sequence of Bender--Knuth involutions. The $i$th BK involution $t_i$ applied to a semistandard tableau $T$ only affects the entries containing an $i$ and $i+1$ and is defined as follows (see \cite{BK, Ste}). Let $S$ be the skew tableau consisting of the cells of $T$ that contain an entry equal to $i$ or $i+1$. An entry $i$ (resp. $i+1$) of $S$ is called \emph{free} if there is no $i+1$ (resp. $i$) in the same column. In any row of $S$ the free $i$ entries appear in consecutive columns followed by free $i+1$ entries in consecutive columns. The tableau $t_i(T)$ is obtained from $T$ by exchanging the number of free $i$ entries and free $i+1$ entries in each row, so that if a row of $T$ has $a$ free $i$'s and $b$ free $i+1$'s, then the same row of $t_i(T)$ has $b$ free $i+1$'s and $a$ free $i$'s. Hence, if $T$ has content $(\gamma_1,\ldots,\gamma_n)$, then $t_i(T)$ has content $\tau_i(\gamma)=(\gamma_1,\ldots,\gamma_{i+1},\gamma_i,\ldots,\gamma_n)$. Promotion can be decomposed into the sequence $\partial(T)=t_{n-1}\cdots t_1(T)$ \cite{Gan}. See Figure \ref{fig:BK example} for an example.

We now define a modified BK involution on $\delta$-semistandard tableaux. As before, the $i$th BK involution will only affect the entries of a tableau that contain $i$ and $i+1$. In addition to transposing the $i$ and $i+1$ entries of the content string, it also transposes the $i$ and $i+1$ entries of the orientation string $\delta$. If $\delta_i$ and $\delta_{i+1}$ differ, then the skew tableau $S$ consisting of cells that contain an $i$ or $i+1$ is a disjoint union of connected ribbons. That is, $S$ does not contain a $2\times 2$ subdiagram.

\begin{Def}\label{def:mixed BK}
The $i$th $\delta$-BK involution $t_i$ applied to a $\delta$-semistandard tableau $T$ is defined as follows. If $\delta_i={\bf h}=\delta_{i+1}$, then apply the classical BK involution to get $t_i(T)$. If $\delta_i={\bf v}=\delta_{i+1}$, then transpose $T$ to get $T^t$, apply the classical BK involution to $T^t$, and transpose the result to get $t_i(T)$. 

If $\delta_i\not=\delta_{i+1}$, then let $S$ denote the ribbon tableau consisting of the cells of $T$ with an entry equal to $i$ or $i+1$. Let $C$ be a connected component of $S$ and suppose that $\delta_i={\bf h}$ and $\delta_{i+1}={\bf v}$. Let $a$ denote the top--rightmost cell of $C$ and $b$ the bottom-leftmost cell. Move every $i$ entry one cell to the right and every $i+1$ entry up one cell. The entry in cell $a$ is placed in cell $b$. Change every $i$ to an $i+1$ and every $i+1$ to an $i$. Then $t_i(T)$ is the result of applying this procedure to every connected component of $S$. Analogously, if $\delta_i={\bf v}$ and $\delta_{i+1}={\bf h}$, then in a connected component of $S$ all of the $i$ entries are moved down a cell, all of the $i+1$ entries are moved one cell to the left, the entry in cell $b$ is placed in cell $a$, and the $i$, $i+1$ values are interchanged. 

The resulting tableau $t_i(T)$ is $\tau_i(\delta)$-semistandard. When $\delta_i\not=\delta_{i+1}$ we call $t_i$ the \emph{mixed BK involution} or the \emph{mixed BK move}. Below is an example for $i=7$ where only the cells containing a $7$ or $8$ of the tableaux are shown.
\end{Def}

\begin{figure}
\begin{align*}
\ytableausetup{boxsize=3.5mm}
\begin{array}{p{2cm}cccccc}
&T&t_1(T)&t_2t_1(T)& t_3t_2t_1(T)&t_4t_3t_2t_1(T)\\[5mm]
tableau:&\begin{ytableau}1&1&1&2&2\\2&2&3&3&4\\3&4&4&5&5\end{ytableau}& \begin{ytableau}1&1&1&1&2\\2&2&3&3&4\\3&4&4&5&5\end{ytableau}&\begin{ytableau}1&1&1&1&3\\2&2&2&3&4\\3&4&4&5&5\end{ytableau}&\begin{ytableau}1&1&1&1&3\\2&2&2&4&4\\3&3&4&5&5\end{ytableau}&\begin{ytableau}1&1&1&1&3\\2&2&2&4&4\\3&3&5&5&5\end{ytableau}&\\[8mm]
partial promotion:&\begin{ytableau}\cdot&\cdot&\cdot&2&2\\2&2&3&3&4\\3&4&4&5&5\end{ytableau}& \begin{ytableau}1&1&1&1&\cdot\\\cdot&\cdot&3&3&4\\3&4&4&5&5\end{ytableau}&\begin{ytableau}1&1&1&1&\cdot\\2&2&2&\cdot&4\\\cdot&4&4&5&5\end{ytableau}&\begin{ytableau}1&1&1&1&3\\2&2&2&\cdot&\cdot\\3&3&\cdot&5&5\end{ytableau}&\begin{ytableau}1&1&1&1&3\\2&2&2&4&4\\3&3&\cdot&\cdot&\cdot\end{ytableau}&\\
\end{array}
\end{align*}
\caption{A semistandard tableau $T$ and applications of BK involutions. \label{fig:BK example}}
\end{figure}

\begin{align*}
\ytableausetup{boxsize=3.5mm}
\begin{ytableau}\none&\none&\none&\none&\none&7&8&8&8\\\none&\none&\none&\none&\none&7\\\none&\none&\none&\none&\none&7\\\none&\none&\none&\none&8&8\\\none&\none&7\\7&8&8\end{ytableau}
\longleftrightarrow
%\hspace{10mm}
\begin{ytableau}\none&\none&\none&\none&\none&7&7&7&7\\\none&\none&\none&\none&\none&8\\\none&\none&\none&\none&\none&8\\\none&\none&\none&\none&7&8\\\none&\none&8\\7&7&8\end{ytableau}
\end{align*}

As mentioned above, promotion in the semistandard case can be decomposed as $\partial(T)=t_{n-1}\cdots t_1(T)$. More generally, $t_{k-1}\cdots t_1(T)$ is the result of applying the promotion operator to the tableau consisting of the cells with entries from the set $\{1,\ldots, k\}$ of $T$, while keeping all other entries fixed. Put differently, $t_{k-1}\cdots t_1(T)$ is the result of replacing the $1$ entries with dots, jeu-de-taquin sliding the dots only past the entries at most $k$, subtracting $1$ from all such entries, and replacing the dots with $k$'s. The analogous result holds for $\delta$-promotion on $\delta$-semistandard tableaux. In particular, the mixed BK move agrees with sliding dots past entries $i+1$ when the dots and $i+1$ entries form strips of differing orientations.

\begin{Prop}
Let $\delta$ be an orientation string such that $\delta_i=\bf h$, $\delta_{i+1}=\bf v$ and let $C$ be a connected component of the ribbon consisting of the $i$ and $i+1$ entries of a $\delta$-semistandard tableau $T$. The following agrees with the mixed BK move $t_i$ of Definition \ref{def:mixed BK}. Change the $i$ entries to dots, jeu-de-taquin slide the dots past the $i+1$'s starting with the rightmost dot, replace the dots with $i$'s, and interchange $i$'s and $i+1$'s. The analogous result holds for $\delta_i=\bf v$ and $\delta_{i+1}=\bf h$ where jeu-de-taquin is applied to the bottom-most dot first.
\end{Prop}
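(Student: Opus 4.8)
The plan is to prove the statement for the case $\delta_i=\mathbf h,\delta_{i+1}=\mathbf v$; the case $\delta_i=\mathbf v,\delta_{i+1}=\mathbf h$ then follows by transposing the tableau, since transposition interchanges rows and columns, the letters $\mathbf h$ and $\mathbf v$, rightward and downward slides, and the rightmost and bottom-most dots. I would first observe that both procedures act independently on the connected components of the skew shape $S$ of cells holding an $i$ or $i+1$: since distinct components are not edge-adjacent, a dot introduced in a component $C$ and slid only past $i+1$'s can never leave $C$, so it suffices to analyze a single component $C$ (the global ``rightmost dot first'' order merely interleaves the components without affecting the order within $C$).

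Next I would record the combinatorial structure of $C$. Orient it as a border strip and read its cells $c_1,\dots,c_N$ from the bottom-left cell $b=c_1$ to the top-right cell $a=c_N$, each step going up (U) or right (R). Using that entries weakly increase along rows and down columns while the $i$'s (resp.\ $i+1$'s) form a horizontal (resp.\ vertical) strip, one checks the dichotomy: a cell containing $i$ is always followed by an R-step and a cell containing $i+1$ by a U-step, with the sole exception of $a$. Consequently the column index is weakly increasing along the path; the cells of each fixed column occur consecutively with all their $i+1$'s at the bottom and at most one $i$ at the very top; and the top of every column-segment is an $i$ except possibly the last column, whose top is $a$. Moreover, by the no-$2\times2$ condition the only in-component neighbors of $c_k$ are $c_{k-1}$ and $c_{k+1}$: the right-neighbor of $c_k$ is $c_{k+1}$ exactly when $c_k$ originally held an $i$, and the below-neighbor of $c_k$ is $c_{k-1}$ exactly when $c_{k-1}$ originally held an $i+1$.

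With this in hand the mixed BK move of Definition \ref{def:mixed BK} becomes transparent: ``move every $i$ one cell right'' sends the content of an $i$-cell $c_k$ to its right-neighbor $c_{k+1}$, ``move every $i+1$ one cell up'' sends the content of an $i+1$-cell $c_k$ to its above-neighbor $c_{k+1}$, and ``$a\mapsto b$'' is the wrap $c_N\mapsto c_1$; so the move is precisely the forward cyclic shift of contents along the path followed by the relabeling $i\leftrightarrow i+1$. It therefore remains to show the jeu-de-taquin description produces the same forward shift. Writing $D\subseteq\{1,\dots,N\}$ for the set of path-positions originally occupied by $i$'s (the dots), I claim that after sliding the dots past the $i+1$'s, rightmost dot first, the set of dot-positions is exactly $D+1\pmod N$. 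Since the mixed BK move also lands the dots at $D+1$ before relabeling (everything shifts forward, sending the $i+1$'s to the complement of $D+1$), and the relabeling is identical, the two outputs coincide.

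Proving this claim is the main obstacle, and is where I would spend the effort. Because the columns are weakly increasing along the path and each carries at most one dot (at its top), processing the rightmost dot first amounts to processing the columns from right to left; and the only branch point in a dot's trajectory is at its starting cell, since an interior column cell holds an $i+1$ and hence has no right-neighbor, so a descending dot cannot be diverted sideways. I would then argue by induction from the rightmost column leftward that each dot either slides straight down to the bottom of its column or slides one step right into the connecting cell of the next column, and that in either case the net effect on the configuration is to advance it by one position along the path. The delicate points are the interaction between adjacent columns through the processing order—whether the connecting cell to the right of a column top has already been vacated and filled with a dot (forcing a downward slide) or still holds an $i+1$ (forcing a rightward slide)—and the boundary behavior at $a$ and $b$, which supplies the cyclic wrap. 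An alternative to this trajectory bookkeeping would be to characterize the jeu-de-taquin output as the unique filling of $C$ by the correct multiset of dots and $i+1$'s that is compatible with $\tau_i(\delta)$-semistandardness, namely with the dots at the bottom of each column, and to verify that $D+1$ is precisely that filling; I would fall back on this characterization if the explicit case analysis becomes unwieldy.
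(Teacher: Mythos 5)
Your proposal is sound and differs from the paper's proof mainly in packaging, with one substantive point you must make explicit (see below). The paper's proof is a short direct trajectory analysis organized row by row: degenerate components (no $i$'s or no $i+1$'s) are dispatched first; otherwise, when the first dot of a row has an $i+1$ directly below it, that dot slides all the way down its column while the remaining dots of that row stay put, whereas when the top-rightmost cell holds an $i+1$ every dot slides exactly one step right; the final configuration is then matched against Definition \ref{def:mixed BK} via the observation that the top-rightmost entry ends up in the bottom-leftmost cell. Your two additions are genuinely different and useful: the reformulation of the mixed BK move as the forward cyclic shift of contents along the border-strip path $c_1,\dots,c_N$ (resting on the correct dichotomy that an $i$-cell is followed by an R-step and an $i+1$-cell by a U-step), and the reduction of the whole proposition to the single set identity that jeu-de-taquin carries the dot set $D$ to $D+1 \pmod N$. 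This makes the wrap $a\mapsto b$ transparent, treats the degenerate components uniformly, and turns the verification into checkable bookkeeping; the paper's version is shorter but leaves the same trajectory facts at the level of inspection. Be aware that the "main obstacle" you defer is precisely the entire content of the paper's proof; your sketch of it is correct as stated (dots lie in distinct columns and are processed right to left; a ribbon contains no cell with in-component neighbors both above and to the right, so a descending dot cannot be diverted and must reach the bottom of its column segment; a dot stepping right lands at the bottom of the next column segment and stops), so the induction does close.

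The point to make explicit: the branch decision can be a tie, and resolving it requires the equal-entries clause of the $\delta$-jeu-de-taquin, which your write-up never invokes. A dot can face an $i+1$ both directly below and directly to the right --- already at the start at an in-corner of the ribbon (e.g.\ the component with cells $b=(r+1,c)$, $(r,c)$, $a=(r,c+1)$ carrying $i+1$, $i$, $i+1$), and again in your "interaction" case after the dot of the adjacent column vacates its cell and the slot is refilled by an $i+1$. The generic "slide toward the smaller entry" rule is silent here; it is exactly the clause that when the entries below and to the right are equal to $i+1$ and $\delta_{i+1}=\mathbf v$ the dot exchanges rightward that produces your "one step right" branch. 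If the dot instead descended, the output would fail $\tau_i(\delta)$-semistandardness and disagree with the mixed BK move, so the asserted agreement is false without this clause. Your case split ("still holds an $i+1$ $\Rightarrow$ rightward slide") is consistent with the rule, but cite it when you execute the induction; with that, and with your fallback uniqueness characterization kept in reserve, the argument is complete.
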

\begin{proof}
Consider the $\delta_i=\bf h$ and $\delta_{i+1}=\bf v$ case. This is almost immediate from the definitions. If there are no $i$'s, or if there are no $i+1$'s, then this agrees with Definition \ref{def:mixed BK}. Suppose $C$ contains both $i$'s and $i+1$'s, and replace all $i$'s with dots. If the top-rightmost cell contained an $i$, then the leftmost dot in the top row of $C$ will slide all the way down its column. The same holds for any row containing dots as long as the first dot has an $i+1$ entry directly below it. If the lowest row of the connected component contains dots, then these dots don't move, and in any case the lowest row contains at least one dot after sliding. This agrees with the mixed BK move since the top-rightmost entry is placed in the bottom-leftmost cell and then $i$'s and $i+1$'s interchanged. Similarly, if the top-rightmost cell contains an $i+1$, then the dot in each row slides to the right. The other case is similar.
\end{proof}

\begin{Cor}\label{cor:BK decomp}
Let $T$ be a $\delta$-semistandard tableau. Then $t_{k-1}\cdots t_1(T)=\partial_\delta\vert_{[k]}(T)$ for all $k$. In particular, $\partial_\delta(T)=t_{n-1}\cdots t_1(T)$.
\end{Cor}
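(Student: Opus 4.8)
The plan is to induct on $k$, proving $t_{k-1}\cdots t_1(T)=\partial_\delta\vert_{[k]}(T)$, where $\partial_\delta\vert_{[k]}(T)$ denotes the tableau obtained by dotting the $1$'s, $\delta$-jeu-de-taquin sliding the dots past exactly the entries at most $k$, subtracting $1$ from those entries, and relabelling the dots with $k$. The base case $k=1$ is immediate: the only cells with value $\le 1$ are the dots themselves, so there is nothing for the dots to slide past, $\partial_\delta\vert_{[1]}(T)=T$, and $t_{k-1}\cdots t_1$ is the empty composite, i.e.\ the identity.

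For the inductive step I would fix $k$, set $U:=t_{k-1}\cdots t_1(T)=\partial_\delta\vert_{[k]}(T)$, and aim to show $\partial_\delta\vert_{[k+1]}(T)=t_k(U)$. First I record the relevant data of $U$: tracking the transpositions $\tau_1,\ldots,\tau_{k-1}$ shows that the orientation at position $k$ of $U$ is $\delta_1$ and that at position $k+1$ is $\delta_{k+1}$; moreover the $k$-strip of $U$ consists exactly of the cells reached by the dots after sliding past all entries $\le k$, while the $(k+1)$-strip of $U$ still sits in the original cells of the $(k+1)$-entries of $T$, having been untouched by $t_1,\ldots,t_{k-1}$.

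The key step is a factorization of the slide: sliding the dots past all entries at most $k+1$ can be performed by first sliding them past the entries at most $k$ (treating the $(k+1)$'s as frozen), and then sliding the resulting configuration past the $(k+1)$'s. Granting this, the dots of $\partial_\delta\vert_{[k+1]}(T)$ occupy the cells obtained from the dot-positions of $U$ by sliding past the $(k+1)$'s, and the $(k+1)$-entries are displaced accordingly. By the Proposition (in the mixed case $\delta_1\neq\delta_{k+1}$), together with the classical Bender--Knuth involution and its transpose (in the equal-orientation cases $\delta_1=\delta_{k+1}$), this is precisely the description of $t_k$ acting on $U$: the $k$-strip is slid past the $(k+1)$-strip and the labels $k,k+1$ are interchanged. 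Matching the relabellings — the slid dots become $k+1$, the original $(k+1)$'s become $k$, and the entries $1,\ldots,k-1$ of $U$ are left fixed — identifies $t_k(U)$ with $\partial_\delta\vert_{[k+1]}(T)$, closing the induction. Taking $k=n$ yields $\partial_\delta=t_{n-1}\cdots t_1$, since replacing the dots by $n$ while subtracting $1$ from every entry agrees with the full-promotion rule of replacing the dots by $n+1$ and then subtracting $1$.

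The main obstacle is justifying the factorization of the slide. The underlying reason is local: in $\delta$-jeu-de-taquin a dot always exchanges with the strictly smaller of its lower and right neighbors (ties, which only occur between equal values, are broken by the orientation of that value), and every entry $\le k$ is strictly smaller than $k+1$; hence a dot never moves into a $(k+1)$-cell while an entry $\le k$ is available to move into. Consequently the two-stage slide and the one-stage slide visit the same cells and produce the same final placement of both the dots and the $(k+1)$'s. Turning this local observation into an identity of tableaux requires a confluence argument, since the trajectories of different dots interact; I would supply it from the standard order-independence of jeu-de-taquin slides, checking in particular that the tie-breaking convention governed by $\delta_{k+1}$ (when a dot sees $k+1$ both below and to its right) is respected under both stagings.
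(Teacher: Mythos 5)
Your proposal is correct and takes essentially the same route as the paper: the paper gives no separate argument for this corollary, deriving it directly from the preceding Proposition (mixed BK move $=$ sliding dots past $(i+1)$'s) together with the factorization of the slide past entries at most $k+1$ into a slide past entries at most $k$ followed by a slide past the $(k+1)$'s, which is exactly your inductive step, with the equal-orientation cases handled by the classical Bender--Knuth involution and its transpose. The confluence issue you flag as the main obstacle is precisely the point the paper also leaves implicit (it asserts ``the analogous result holds for $\delta$-promotion'' by analogy with the classical decomposition $\partial=t_{n-1}\cdots t_1$ of Gansner), so your write-up, including the local observation that a dot never enters a $(k+1)$-cell while a strictly smaller neighbor is available and that once past a $k+1$ all its neighbors are at least $k+1$, is if anything more explicit than the paper's own treatment.
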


\section{Proof of Order $n$}\label{sec:periodicity}
Theorem \ref{thm:periodicity} states that promotion on rectangular $\delta$-semistandard tableaux has period $n$. We prove it in this section using Knutson--Tao hives. The main idea is already present in \cite{Akh} with the main addition being the analysis of the ``mixed'' setup corresponding to the mixed BK involution.

\begin{figure}[b]
\centering
\scalemath{.5}{
\begin{tikzpicture}
  \begin{ternaryaxis}[
  %%separate axis lines,
%%y axis line style= { draw opacity=0 },
  ylabel=$\scalemath{2}{\sigma_3=(3,0,0,0)}$,xlabel=$\scalemath{2}{(2,1,0,0)}$,zlabel=$\scalemath{2}{(4,2,0,0)}$,
    clip=false,xticklabels={},yticklabels={},zticklabels={},
    xmin=0,
    ymin=0,
    zmin=0,
    xmax=8,
    ymax=8,
    zmax=8,
    xtick={2,4,6},
    ytick={2,4,6},
    ztick={2,4,6},
    major tick length=0,
    %axis .style={opacity=.1}
    %%opacity=0.8,
    %%clip=false
]    
  	\node at (axis cs:0,24,0) {$\scalemath{2}{0}$};
	\node at (axis cs:2,22,0) {$\scalemath{2}{3}$};
	\node at (axis cs:4,20,0) {$\scalemath{2}{3}$};
	\node at (axis cs:6,18,0) {$\scalemath{2}{3}$};
	\node at (axis cs:8,16,0) {$\scalemath{2}{3}$};
	
	\node at (axis cs:0,22,2) {$\scalemath{2}{4}$};
	\node at (axis cs:2,20,2) {$\scalemath{2}{5}$};
	\node at (axis cs:4,18,2) {$\scalemath{2}{5}$};
	\node at (axis cs:6,16,2) {$\scalemath{2}{5}$};
	
	\node at (axis cs:0,20,4) {$\scalemath{2}{6}$};
	\node at (axis cs:2,18,4) {$\scalemath{2}{6}$};
	\node at (axis cs:4,16,4) {$\scalemath{2}{6}$};
	
	\node at (axis cs:0,18,6) {$\scalemath{2}{6}$};
	\node at (axis cs:2,16,6) {$\scalemath{2}{6}$};
	
	\node at (axis cs:0,16,8) {$\scalemath{2}{6}$};
		
  \end{ternaryaxis}
\end{tikzpicture}
}
\hspace{5mm}
\scalemath{.5}{
\begin{tikzpicture}
  \begin{ternaryaxis}[
  %%separate axis lines,
%%y axis line style= { draw opacity=0 },
  ylabel=$\scalemath{2}{\omega_2=(1,1,0,0)}$,xlabel=$\scalemath{2}{(2,1,0,0)}$,zlabel=$\scalemath{2}{(3,1,1,0)}$,
    clip=false,xticklabels={},yticklabels={},zticklabels={},
    xmin=0,
    ymin=0,
    zmin=0,
    xmax=8,
    ymax=8,
    zmax=8,
    xtick={2,4,6},
    ytick={2,4,6},
    ztick={2,4,6},
    major tick length=0,
    %axis .style={opacity=.1}
    %%opacity=0.8,
    %%clip=false
]    
  	\node at (axis cs:0,24,0) {$\scalemath{2}{0}$};
	\node at (axis cs:2,22,0) {$\scalemath{2}{1}$};
	\node at (axis cs:4,20,0) {$\scalemath{2}{2}$};
	\node at (axis cs:6,18,0) {$\scalemath{2}{2}$};
	\node at (axis cs:8,16,0) {$\scalemath{2}{2}$};
	
	\node at (axis cs:0,22,2) {$\scalemath{2}{3}$};
	\node at (axis cs:2,20,2) {$\scalemath{2}{4}$};
	\node at (axis cs:4,18,2) {$\scalemath{2}{4}$};
	\node at (axis cs:6,16,2) {$\scalemath{2}{4}$};
	
	\node at (axis cs:0,20,4) {$\scalemath{2}{4}$};
	\node at (axis cs:2,18,4) {$\scalemath{2}{5}$};
	\node at (axis cs:4,16,4) {$\scalemath{2}{5}$};
	
	\node at (axis cs:0,18,6) {$\scalemath{2}{5}$};
	\node at (axis cs:2,16,6) {$\scalemath{2}{5}$};
	
	\node at (axis cs:0,16,8) {$\scalemath{2}{5}$};
	
	\addplot3[color=red] coordinates {(4,4,0)(2,4,2)(2,2,4)(0,2,6)(0,0,8)};
		
  \end{ternaryaxis}
\end{tikzpicture}
}
\caption{Two examples of $3$-hives with $m=4$. The break path of the hive on the right is indicated in red. \label{fig:hive examples}}
\end{figure}

\subsection{Knutson--Tao Hives}
We briefly recall Knutson--Tao hives \cite{KT}. Hives count $GL_m$ tensor product multiplicities. Let 
\begin{align*}
\Delta_m^3=\{(i,j,k)\in \mathbb Z_{\geq0}^3\mid i+j+k=m\}
\end{align*}
be the set of lattice points in a triangle of size $m$. A \emph{hive}, or \emph{3-hive}, is an assignment of an integer $f_{i,j,k}$ to each point of $\Delta_m^3$ that satisfies the following rhombus inequalities, 
\begin{align*}
f_{i,j,k}+f_{i,j+1,k-1}&\geq f_{i+1,j,k-1}+f_{i-1,j+1,k}\\
f_{i,j,k}+f_{i+1,j-1,k}&\geq f_{i+1,j,k-1}+f_{i,j-1,k+1}\\
f_{i,j,k}+f_{i+1,j,k-1}&\geq f_{i,j+1,k-1}+f_{i+1,j-1,k},
\end{align*}
for all $(i,j,k)$ in $\Delta_m^3$. The rhombus inequalities state that for any unit rhombus in $\Delta_m^3$ the sum of the labels of the two vertices across the short diagonal is at least the sum of the labels of the vertices across the long diagonal. Two hives are considered equivalent if there is a constant $c$ such that their labels at each lattice point differ by $c$. See Figure \ref{fig:hive examples} for examples. 

As a consequence of the hive inequalities, the successive differences of the edge values $(f_{m,0,0},f_{m-1,1,0},\ldots,f_{0,m,0})$ give a weakly decreasing sequence of integers 
\begin{align*}
\lambda=(f_{m-1,1,0}-f_{m,0,0},f_{m-2,2,0}-f_{m-1,2,0},\ldots,f_{1,m-1,0}-f_{0,m,0}).
\end{align*}
Similarly, the successive differences of the hive values along the other two external edges $(f_{0,m,0},f_{0,m-1,1},\ldots,f_{0,0,m})$ and $(f_{m,0,0},f_{m-1,0,1},\ldots,f_{0,0,m})$ also give weakly decreasing sequences $\mu$ and $\nu$. In this case, we will say that the $3$-hive is of \emph{type} $(\lambda,\mu;\nu)$. Recall that the dominant weights of $GL_m$ are weakly decreasing sequences of integers $\lambda_1\geq \lambda_2\geq \cdots \geq\lambda_m$. Reading the differences along an edge in the opposite order gives the dual dominant weight $\lambda^*=(-\lambda_m,\ldots,-\lambda_1)$. When a dominant weight has no negative parts we will often interpret it as a partition.

We will also need the definition of $n$-hives, although we will primarily be working with $4$-hives. Let
\begin{align*}
\Delta_m^n=\{(i_1,\ldots,i_n)\in \mathbb Z_{\geq 0}^n\mid i_1+\cdots +i_n=m\}
\end{align*}
be the set of lattice points in an $(n-1)$-dimensional tetrahedron of size $m$. An $n$-hive is an assignment of integers $f_{i_1,\ldots,i_n}$ to each point of $\Delta_m^n$ such that the rhombus inequalities are satisfied when restricting to every two-dimensional face, and for all $\vec{j}=(j_1,\ldots,j_n)$ such that $\sum j_k=m-2$ and all $1\leq a<b<c<d\leq n$ the following octahedron recurrence holds.
\begin{align}\label{oct rec}
f_{\vec{\jmath}+e_a+e_c}+f_{\vec{\jmath}+e_b+e_d}=\max\left(f_{\vec{\jmath}+e_a+e_b}+f_{\vec{\jmath}+e_c+e_d},f_{\vec{\jmath}+e_a+e_d}+f_{\vec{\jmath}+e_b+e_c}\right)
\end{align}
Here $e_k$ denotes the vector with a $1$ in the $k$th position and $0$'s elsewhere. As mentioned, the most important case for the proof of periodicity will be when $n=4$, so that the hive values label lattice points of a tetrahedron. In this case, the octahedron recurrence can be visualized as shown in Figure \ref{fig:octahedron}. For each $i$ in the range $1\leq i \leq n-1$ let $\lambda^i$ be the dominant weight given by the successive differences of the $n$-hive along the edge from lattice point $m\cdot e_i$ to $m\cdot e_{i+1}$ and $\mu$ given by the differences along the edge from $m\cdot e_1$ to $m\cdot e_n$. The \emph{type} of the $n$-hive is then defined to be $(\lambda^1,\ldots,\lambda^{n-1};\mu)$.
\begin{figure}[b]
\begin{center}
\begin{tikzpicture}[line join=bevel,z=-7]
\coordinate (A1) at (0,0,-1);
\coordinate (A2) at (-1,0,0);
\coordinate (A3) at (0,0,1);
\coordinate (A4) at (1,0,0);
\coordinate (B) at (0,1,0);
\coordinate (C) at (0,-1,0);

\node at (A1) {b};
\node[left] at (A2) {a};
\node[below,left] at (A3) {d};
\node[right] at (A4) {c};
\node[above] at (B) {e};
\node[below] at (C) {f=max(a+c,b+d)-e};
\draw[dotted] (A1) -- (A2) -- (B) -- cycle;
\draw[dotted] (A4) -- (A1) -- (B) -- cycle;
\draw[dotted] (A1) -- (A2) -- (C) -- cycle;
\draw[dotted] (A4) -- (A1) -- (C) -- cycle;
\draw (A2) -- (A3) -- (B) -- cycle;
\draw (A3) -- (A4) -- (B) -- cycle;
\draw (A2) -- (A3) -- (C) -- cycle;
\draw (A3) -- (A4) -- (C) -- cycle;
\end{tikzpicture}
\caption{The octahedron recurrence. \label{fig:octahedron}}
\end{center}
\end{figure}

Knutson and Tao \cite{KT} established the following theorem for $n=3,4$, which was later extended to all $n$ in \cite{Kam, GS}. For a dominant weight $\lambda$ let $V_{\lambda}$ denote the corresponding irreducible representation of $GL_m$.
\begin{Thm}[\cite{KT,Kam,GS}]\label{thm:hives}
Let $(\lambda^1,\ldots,\lambda^{n-1};\mu)$ be a sequence of dominant weights. The number of $n$-hives of type $(\lambda^1,\ldots,\lambda^{n-1};\mu)$ is equal to the multiplicity of $V_{\mu}$ in $V_{\lambda^1}\otimes\cdots\otimes V_{\lambda^{n-1}}$.
\end{Thm}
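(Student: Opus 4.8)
My plan is to prove this by induction on $n$, taking the Knutson--Tao results for $n=3$ and $n=4$ as the base and using the octahedron recurrence \eqref{oct rec} to pass from $n-1$ to $n$. The case $n=3$ is the classical hive model: a $3$-hive of type $(\lambda^1,\lambda^2;\mu)$ is counted by the Littlewood--Richardson coefficient, that is, by the multiplicity of $V_\mu$ in $V_{\lambda^1}\otimes V_{\lambda^2}$, and I take this as given (and $n=4$ is itself the first instance where \eqref{oct rec} is genuinely used). The target recursion on the representation-theoretic side is the associativity identity
\begin{align*}
\dim\Hom\!\Big(V_\mu,\textstyle\bigotimes_{i=1}^{n-1}V_{\lambda^i}\Big)=\sum_{\nu}\dim\Hom\!\Big(V_\nu,\textstyle\bigotimes_{i=1}^{n-2}V_{\lambda^i}\Big)\cdot\dim\Hom\!\big(V_\mu,V_\nu\otimes V_{\lambda^{n-1}}\big),
\end{align*}
so it suffices to produce a matching recursion on the hive side.

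On the hive side I would build a bijection that ``peels off the last tensor factor.'' The forward direction is easy: every two-dimensional face of an $n$-hive automatically satisfies the rhombus inequalities, hence is a $3$-hive. Restricting an $n$-hive of type $(\lambda^1,\ldots,\lambda^{n-1};\mu)$ to the facet $\{i_n=0\}$ yields an $(n-1)$-hive of some type $(\lambda^1,\ldots,\lambda^{n-2};\nu)$, where $\nu$ is read along the edge from $m e_1$ to $m e_{n-1}$, while restricting to the triangular face on the vertices $m e_1, m e_{n-1}, m e_n$ yields a $3$-hive of type $(\nu,\lambda^{n-1};\mu)$; the two restrictions agree along their shared edge. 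Summing the resulting correspondence over the intermediate weight $\nu$ reproduces exactly the associativity identity above, so the theorem reduces to showing this correspondence is a bijection.

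The heart of the argument, and what I expect to be the main obstacle, is the inverse map: given an $(n-1)$-hive and a compatible $3$-hive sharing the weight $\nu$, one must reconstruct a genuine $n$-hive restricting to them, and show it is the unique such. This is precisely where the octahedron recurrence \eqref{oct rec} does the work, propagating the boundary data into the interior as in Figure \ref{fig:octahedron}. Two points require care. First, the reconstruction must be well defined and integral: the interior values must be determined independently of the order in which one applies the recurrence, which is the associativity (path-independence) of the octahedron recurrence. Second, the reconstructed assignment must satisfy the rhombus inequalities on \emph{every} $2$-face, not merely on the faces one starts from; this positivity is what certifies that the output is a bona fide hive. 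Establishing these two facts is exactly the content that upgrades the $n=3,4$ cases of \cite{KT} to all $n$ in \cite{Kam,GS}, and once they are in place the forward restriction and the recurrence-based reconstruction are visibly mutually inverse, completing the induction.
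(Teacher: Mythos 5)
First, a point of comparison: the paper contains no proof of this statement at all --- it is imported wholesale from \cite{KT} (for $n=3,4$) and \cite{Kam,GS} (for general $n$), so there is no internal argument to measure yours against. That said, your skeleton is the standard one and is consistent with the mechanism the paper itself invokes later: the proof of the Corollary in \S 4 explicitly uses the fact that ``for any triangulation $T$ of the $n$-gon, the face hive values of the faces of $T$ determine the entire $n$-hive by the octahedron recurrence,'' and your peeling of the facet $\{i_n=0\}$ together with the triangle $(me_1,me_{n-1},me_n)$, iterated, is exactly the fan triangulation at $me_1$. Your forward direction is sound (restriction to the facet preserves both the rhombus inequalities and the octahedron relations with indices in $\{1,\ldots,n-1\}$), and summing over $\nu$ does reproduce the associativity identity.

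The gap is in the inverse map, and it sits slightly differently from where you place it. Uniqueness is actually the easy part and is not really ``associativity'': solving the octahedron relation for $f_{\vec\jmath+e_b+e_n}$ forces the values slice by slice in $i_n$ (and within a slice by induction on a second coordinate), so each unknown is determined by an essentially forced order of excavation --- path-independence is not the issue. The genuinely hard content, which your proposal names but then defers to the very references the theorem cites, is \emph{existence}: that the forced values are consistent with all octahedron relations not used in the propagation, and that they satisfy the rhombus inequalities on the newly revealed $2$-faces. For $n=4$ this is precisely Knutson--Tao's excavation lemma (the local analysis that the paper's own Proposition \ref{prop:main} redoes in a special case), but your induction step for $n>4$ cannot simply quote the $n=4$ case: points of $\Delta_m^n$ with support on more than four coordinates lie in no sub-tetrahedron $(me_1,me_i,me_{n-1},me_n)$, so the inequality propagation must be re-established globally. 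Note also that the actual proofs in \cite{Kam,GS} are geometric (fibers of the convolution morphism on the affine Grassmannian, respectively tropicalized Fock--Goncharov cluster coordinates), not a combinatorial induction; so what you propose is a genuinely different, purely combinatorial route, but as written it reduces the theorem to its hardest special cases and an unproved propagation lemma rather than proving it.
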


Let $\omega_i=(1,\ldots,1,0,\ldots,0)$ denote the $i$th fundamental weight where there are $i$ many $1$'s. Let $\sigma_i$ denote the weight $(i,0,\ldots,0)$ corresponding to the $i$th symmetric power representation. Recall that for any irreducible representation $V_\lambda$, the decompositions of $V_{\omega_i}\otimes V_\lambda$ and $V_{\sigma_i}\otimes V_\lambda$ into direct sums are multiplicity free. By the previous theorem the number of $3$-hives of type $(\omega_i,\lambda;\mu)$ and $(\sigma_i,\lambda;\mu)$ is either $0$ or $1$. By the Pieri rule the irreducible representations occurring in $V_{\omega_i}\otimes V_\lambda=\oplus V_\mu$ correspond to $\mu$ gotten from $\lambda$ by adding a vertical strip of size $i$. Similarly, those arising in $V_{\sigma_i}\otimes V_\lambda=\oplus V_\mu$ correspond to $\mu$ gotten from $\lambda$ by adding a horizontal strip of size $i$. In this case, the $3$-hives are easy to understand, as described by the following two lemmas. The first appears in \cite[Lem.~2]{KT}, and the second is similar so we omit the proofs. In the following, for a $GL_m$-weight $\lambda$ we will use the notation $\lambda^{(k)}$ to denote $(\lambda_1,\ldots,\lambda_k)$, the truncation of $\lambda$ to the first $k$ parts.

\begin{Lem}[{\cite[Lem.~2]{KT}}]\label{lem:alt}
Let $f$ be a $GL_m$ $3$-hive of type $(\omega_i,\lambda;\mu)$, so that the northwest external edge has successive differences $\omega_i$ when read from southwest to northeast. Then the southwest-to-northeast oriented strip located one step in from the northwest external edge has successive differences $\omega_i^{(m-1)}$ or $\omega_{i-1}^{(m-1)}$. The first case occurs if and only if $\lambda_1=\mu_1$ and the second case if and only if $\mu_1=\lambda_1+1$. The pattern continues so that the $j$th southwest-to-northeast strip has differences given by $\omega_{i_j}^{(m-j+1)}$ for $i=i_1\geq i_2\geq\cdots\geq i_m$ such that $i_{j+1}=i_j$ if and only if $\mu_j=\lambda_j$ and $i_{j+1}=i_j-1$ if and only if $\mu_j=\lambda_j+1$. The analogous result holds for a hive of type $(\lambda,\omega_i;\mu)$.
\end{Lem}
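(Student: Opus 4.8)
The plan is to argue directly from the rhombus inequalities, following the successive differences of the hive along the strips parallel to the $\omega_i$ edge. Index these strips by $j\ge 1$, with the external northwest edge being $j=1$, and let $a^j=(a^j_1,a^j_2,\dots)$ be the vector of successive differences of the hive values along strip $j$, read from southwest to northeast, so that $a^1=\omega_i=(1^i,0^{m-i})$. Everything will be deduced from how $a^{j+1}$ is constrained by $a^j$.

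The key step is to show that consecutive strips \emph{interlace}: $a^{j}_{t+1}\le a^{j+1}_t\le a^{j}_t$ for all $t$. Each of the two inequalities is a single rhombus inequality applied to the unit rhombus straddling strips $j$ and $j+1$ at position $t$: the upper bound $a^{j+1}_t\le a^j_t$ is the first of the three rhombus inequalities, and the lower bound $a^{j+1}_t\ge a^j_{t+1}$ is the third, once one identifies the short and long diagonals of the relevant rhombus. I expect this to be the main obstacle, not because it is deep but because it requires carefully matching each geometric unit rhombus to the correct one of the three stated inequalities and tracking the orientation conventions; once set up it is a finite check.

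Granting the interlacing, the structural claim follows by induction on $j$. The base case $a^1=\omega_i$ is a weakly decreasing $0$–$1$ vector. If $a^j$ is weakly decreasing and $0$–$1$ valued, then the sandwich $a^j_{t+1}\le a^{j+1}_t\le a^j_t$ forces every $a^{j+1}_t\in\{0,1\}$ and forces $a^{j+1}$ to be weakly decreasing, so $a^{j+1}=\omega_{i_{j+1}}^{(m-j)}$ for some $i_{j+1}$; comparing the blocks of $1$'s in $a^j$ and $a^{j+1}$ shows $i_{j+1}\in\{i_j-1,i_j\}$. In particular the strip one step in from the external edge is $\omega_i^{(m-1)}$ or $\omega_{i-1}^{(m-1)}$.

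It remains to determine which case occurs. Summing the differences along strip $j$ telescopes to $i_j=f_{0,m-j+1,j-1}-f_{m-j+1,0,j-1}$, the difference of the two endpoints of the strip, which lie on the $\lambda$ edge and the $\mu$ edge respectively. Subtracting the corresponding identity for strip $j+1$ and recognizing that the $j$th successive differences along those two edges are exactly $\lambda_j$ and $\mu_j$ yields $i_j-i_{j+1}=\mu_j-\lambda_j$. Since $\mu$ arises from $\lambda$ by adding a vertical strip of size $i$ (the Pieri rule), each $\mu_j-\lambda_j\in\{0,1\}$, so $i_{j+1}=i_j$ iff $\mu_j=\lambda_j$ and $i_{j+1}=i_j-1$ iff $\mu_j=\lambda_j+1$, as claimed. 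The assertion for hives of type $(\lambda,\omega_i;\mu)$ then follows from the symmetry of $\Delta_m^3$ that interchanges the $\omega_i$ edge with the $\lambda$ edge while fixing the $\mu$ edge.
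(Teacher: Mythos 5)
The paper offers no proof to compare against: this lemma is quoted from Knutson--Tao, and the text says explicitly that ``the first appears in \cite[Lem.~2]{KT}, and the second is similar so we omit the proofs.'' So your argument stands in for a proof the paper defers entirely to \cite{KT}, and judged on its own it is correct. The one step you flagged as delicate does check out: parametrizing strip $j$ by $P^j_t=(m-j+1-t,\,t,\,j-1)$ for $t=0,\dots,m-j+1$, with $a^j_t=f(P^j_t)-f(P^j_{t-1})$, the instance of the paper's third rhombus inequality based at $P^{j+1}_t$ reads $f(P^{j+1}_t)+f(P^j_t)\geq f(P^j_{t+1})+f(P^{j+1}_{t-1})$, i.e.\ $a^{j+1}_t\geq a^j_{t+1}$, and the instance of the first inequality based at $P^{j+1}_{t-1}$ reads $f(P^{j+1}_{t-1})+f(P^j_t)\geq f(P^j_{t-1})+f(P^{j+1}_t)$, i.e.\ $a^{j+1}_t\leq a^j_t$ --- exactly the matching you guessed. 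The interlacing then gives the $0$--$1$ property, weak decrease via $a^{j+1}_{t+1}\leq a^j_{t+1}\leq a^{j+1}_t$, and $i_{j+1}\in\{i_j-1,i_j\}$; and your telescoping identity $i_j-i_{j+1}=\mu_j-\lambda_j$ has the correct signs, since the strip endpoints $(0,m-j+1,j-1)$ and $(m-j+1,0,j-1)$ lie on the $\lambda$- and $\mu$-edges respectively. One remark: the appeal to the Pieri rule is redundant (and slightly backwards), since $\mu_j-\lambda_j\in\{0,1\}$ already follows from $i_{j+1}\in\{i_j-1,i_j\}$ and the identity; Pieri-for-hives is downstream of Theorem \ref{thm:hives}, whereas your interlacing derives the fact directly.

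The only genuine soft spot is the final sentence. The symmetry of $\Delta_m^3$ that interchanges the first-weight edge with the second-weight edge while fixing the third, namely $f_{a,b,c}\mapsto f_{c,b,a}$, fixes the $\mu$-edge only setwise: it reverses the reading direction along every edge and therefore dualizes every boundary weight, sending type $(\lambda,\omega_i;\mu)$ to $(\omega_i^*,\lambda^*;\mu^*)$. Since $\omega_i^*=(0,\dots,0,-1,\dots,-1)$ is not of the form $\omega_{i'}$, you cannot literally quote the case already proved. This is repairable: either add an affine-linear function such as $(a,b,c)\mapsto b$ to the reflected hive (which preserves the rhombus inequalities and turns the first weight into $\omega_{m-i}$, after which one must untangle the resulting complementation and reversal of indices), or, more simply, rerun your interlacing argument verbatim on the strips parallel to the $\omega_i$-edge of the original hive, using the appropriate orientations of the rhombus inequalities. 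As written, though, ``by symmetry'' does not deliver the second statement.
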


We can encode this information in the form of a lattice path through the hive that begins at the northwest external edge and ends at the vertex $me_3$. With the notation of the previous lemma, the path visits the vertices 
\begin{align*}
(m-i_1,i_1,0), (m-i_2-1,i_2,1), (m-i_3-2,i_3,2), \ldots, (1-i_{m-1},i_{m-1},m-1),(0,0,m). 
\end{align*}
We call this the \emph{break path}. Note that the hive values along a strip increase by $1$ until they reach the break path, after which they are constant. The break path is indicated in the hive on the right of Figure \ref{fig:hive examples}. When drawn as in Figure \ref{fig:hive examples} there is a slanted step between the $j$ and $j+1$ vertices if and only if $\mu_j=\lambda_j+1$ and a horizontal step if and only if $\mu_j=\lambda_j$. 

\begin{Lem}\label{lem:sym}
Let $f$ be a $GL_m$ $3$-hive of type $(\sigma_i,\lambda;\mu)$, so that the northwest external edge has successive differences $\sigma_i$ when read from southwest to northeast. The $j$th southwest-to-northeast strip has differences $\sigma_{i_j}^{(m-j+1)}$ for some $i_j$ such that $i=i_1\geq i_2 \geq \cdots \geq i_m$. More precisely, the $i_j$ are given by the equation $\mu_{j}=\lambda_j+(i_j-i_{j+1})$. The analogous result holds for a hive of type $(\lambda,\sigma_i;\mu)$.
\end{Lem}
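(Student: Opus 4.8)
The plan is to mirror the structure of the proof of Lemma \ref{lem:alt} (which the authors cite from \cite[Lem.~2]{KT}), replacing the vertical-strip Pieri rule by the horizontal-strip one. First I would set up the combinatorial framework: a $3$-hive of type $(\sigma_i,\lambda;\mu)$ has its northwest external edge carrying successive differences $\sigma_i=(i,0,\ldots,0)$, so only the first edge-step is nonzero. The key structural claim is that each southwest-to-northeast strip, read one step further in from the northwest edge, carries differences of the form $\sigma_{i_j}^{(m-j+1)}$ for a weakly decreasing sequence $i=i_1\geq i_2\geq\cdots\geq i_m$. I would prove this by induction on the strip index $j$, using the rhombus inequalities across each unit rhombus between consecutive strips: the inequalities force that once we leave the innermost strip, the only place a nonzero difference can appear is again in the first position, so the shape $\sigma_{i_j}$ propagates inward with $i_j$ only able to decrease.

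Next I would extract the relation $\mu_j=\lambda_j+(i_j-i_{j+1})$. The idea is that the second external edge (carrying $\lambda$) and the bottom edge (carrying $\mu$) are connected through the hive by exactly the chain of strips just analyzed. At the $j$th step, the amount by which the $\mu$-difference exceeds the $\lambda$-difference is precisely the drop $i_j-i_{j+1}$ in the leading entry of the strip labels; summing or reading these off locally gives the stated equation. Since $\sum_j(i_j-i_{j+1})=i_1=i$, this automatically records that $\mu$ is obtained from $\lambda$ by adding a horizontal strip of total size $i$, matching the Pieri rule $V_{\sigma_i}\otimes V_\lambda=\bigoplus_\mu V_\mu$, and the multiplicity-freeness guarantees the hive is uniquely determined.

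The step I expect to be the main obstacle is verifying that the rhombus inequalities genuinely force the strip differences to retain the $\sigma$-shape (nonzero only in the first position) rather than spreading into later coordinates; this is the horizontal-strip analogue of the careful rhombus bookkeeping in \cite[Lem.~2]{KT}, and one must check all three rhombus orientations to rule out any leakage of difference into positions $2,\ldots,m-j+1$. Once that propagation is established, the symmetric-power case is in fact slightly cleaner than the alternating case, since $\sigma_i$ concentrates all its weight in one coordinate and there is no break-path bookkeeping of horizontal versus slanted steps to track. The analogous statement for type $(\lambda,\sigma_i;\mu)$ follows by the symmetry of the hive under interchanging the two external edges carrying $\lambda$ and $\sigma_i$, i.e.\ reflecting the triangle, so no separate argument is needed.
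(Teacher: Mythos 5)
Your proposal is correct and coincides with the paper's intended argument: the paper omits this proof entirely, remarking only that it is ``similar'' to the proof of Lemma \ref{lem:alt} (\cite[Lem.~2]{KT}), and your inductive propagation of the $\sigma$-shape via the interlacing rhombus inequalities between consecutive strips (which, given strip differences $\sigma_{i_j}^{(m-j+1)}$, force the next strip's differences to vanish in positions $2,\ldots,m-j$ and hence to equal $\sigma_{i_{j+1}}^{(m-j)}$ with $i_{j+1}\leq i_j$), together with the endpoint bookkeeping along the $\lambda$- and $\mu$-edges yielding $\mu_j=\lambda_j+(i_j-i_{j+1})$, is exactly the horizontal-strip adaptation the authors have in mind. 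The one imprecision is your appeal to reflection symmetry for the $(\lambda,\sigma_i;\mu)$ case: reflecting a hive dualizes the boundary weights, giving type $(\sigma_i^*,\lambda^*;\mu^*)$ with $\sigma_i^*=(0,\ldots,0,-i)$ no longer of $\sigma$-shape, so rather than quoting a symmetry one should simply run the identical local interlacing argument on the strips parallel to the northeast edge (equivalently, reverse the reading conventions), which is immediate.
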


Let $(\lambda^1,\ldots,\lambda^n)$ be a sequence of $GL_m$-weights such that for all $i$, $\lambda^i$ is equal to $\omega_{j_i}$ or $\sigma_{j_i}$. We will call such a sequence a \emph{sym-alt sequence}. Let $\delta$ be the corresponding orientation string, so that $\delta_i=\bf v$ if $\lambda^i=\omega_{j_i}$ and $\delta_i=\bf h$ if $\lambda^i=\sigma_{j_i}$. Let $s=\sum j_i$ and $\mu$ be the $m\times (s/m)$ rectangular partition. By the iterated Pieri rule the multiplicity of the the irreducible representation $V_\mu$ in $V_{\lambda^1}\otimes\cdots\otimes V_{\lambda^n}$ is equal to the number of $\delta$-semistandard tableaux of shape $\mu$ and content $(j_1,j_2,\ldots , j_n)$. By Theorem \ref{thm:hives} this is equal to the number of $(n+1)$-hives of type $(\lambda^1,\ldots,\lambda^n;\mu)$.

To see the bijection between $(n+1)$-hives and $\delta$-semistandard tableaux, let $\alpha_{i,j}$ denote the dominant weight given by the successive differences of the hive values at the lattice points along the edge $(me_i,me_j)$. Then each subhive with vertices $(me_1,me_i,me_{i+1})$ is a $3$-hive of type $(\alpha_{1,i},\lambda^i;\alpha_{1,i+1})$. By Theorem \ref{thm:hives} and the Pieri rule the dominant weights $\alpha_{1,i}$ and $\alpha_{1,i+1}$ can be interpreted as partitions that differ by a strip of size $j_i$ with orientation depending on $\delta_i$. Then $\alpha_{1,1},\alpha_{1,2},\ldots,\alpha_{1,n}$ is a nested sequence of partitions ending at a rectangular shape $\mu$, which can be interpreted $\delta$-semistandard tableau $T$, and this is a bijection. 

The following is the main proposition that we will need to prove periodicity. It says that the weights $\alpha_{i,j},\alpha_{i+1,j},\alpha_{i,j+1},\alpha_{i+1,j+1}$, when appropriately interpreted as skew tableaux, are related by the $\delta$-BK move. More precisely, consider sequences of partitions $\lambda \subset \mu \subset \nu$, $\lambda\subset \rho \subset \nu$ such that the skew shapes $\mu/\lambda$ and $\nu/\rho$ are horizontal strips, while $\nu/\mu$ and $\rho/\lambda$ are vertical strips, so in particular, $\nu/\lambda$ is a ribbon. The sequences $\lambda \subset \mu \subset \nu$ and $\lambda\subset \rho \subset \nu$ can be interpreted as a pair of skew tableau with $\mu/\lambda$, $\rho/\lambda$ containing $1$'s and $\nu/\mu$, $\nu/\rho$ containing $2$'s. We will say that the two sequences are related by the mixed BK move if these corresponding ribbon tableaux are related by the mixed BK move.

\begin{Prop}\label{prop:main}
Consider a $4$-hive with $\alpha_{i,j}$ being the dominant weights given by the successive differences of the hive values along the edges from $me_i$ to $me_j$ for $1\leq i\leq j\leq 4$. Suppose that $\alpha_{1,2}=\sigma_l$ and $\alpha_{3,4}=\omega_k$ and that the $\alpha_{i,j}$ for $1\leq i\leq j\leq 4$ do not have negative parts. Then $\alpha_{1,3}/\alpha_{2,3}$ and $\alpha_{1,4}/\alpha_{2,4}$ are horizontal strips, and $\alpha_{1,4}/\alpha_{1,3}$ and $\alpha_{2,4}/\alpha_{2,3}$ are vertical strips. The sequences of partitions 
\begin{align*}
\alpha_{2,3}\subset \alpha_{1,3}\subset \alpha_{1,4}\\
\alpha_{2,3}\subset \alpha_{2,4}\subset \alpha_{1,4}
\end{align*}
are related by the mixed BK move. The analogous result holds for $\alpha_{1,2}=\omega_k$ and $\alpha_{3,4}=\sigma_l$. Similarly, if $\alpha_{1,2}=\sigma_l$, $\alpha_{3,4}=\sigma_k$, then the partitions are related by the usual BK move, and if $\alpha_{1,2}=\omega_l$, $\alpha_{3,4}=\omega_k$, then the partitions are related by the transpose of the usual BK move.
\end{Prop}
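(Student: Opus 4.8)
The plan is to read the four relevant weights off the four triangular faces of the tetrahedron $\Delta_m^4$, and then to extract the mixed BK move from the single octahedron recurrence that a $4$-hive carries.

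\textbf{Step 1 (the strip conditions).} Each face of $\Delta_m^4$ obtained by setting one coordinate to zero is a $3$-hive whose type is recorded by the differences along its three edges. Applying Theorem \ref{thm:hives} together with the Pieri rule in the forms of Lemmas \ref{lem:sym} and \ref{lem:alt}: the face $i_4=0$ has type $(\sigma_l,\alpha_{2,3};\alpha_{1,3})$, so $\alpha_{1,3}/\alpha_{2,3}$ is a horizontal $l$-strip; the face $i_3=0$ has type $(\sigma_l,\alpha_{2,4};\alpha_{1,4})$, so $\alpha_{1,4}/\alpha_{2,4}$ is a horizontal $l$-strip; the face $i_2=0$ has type $(\alpha_{1,3},\omega_k;\alpha_{1,4})$, so $\alpha_{1,4}/\alpha_{1,3}$ is a vertical $k$-strip; and the face $i_1=0$ has type $(\alpha_{2,3},\omega_k;\alpha_{2,4})$, so $\alpha_{2,4}/\alpha_{2,3}$ is a vertical $k$-strip. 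Hence $\alpha_{1,4}/\alpha_{2,3}$ is a ribbon, and the two chains $\alpha_{2,3}\subset\alpha_{1,3}\subset\alpha_{1,4}$ and $\alpha_{2,3}\subset\alpha_{2,4}\subset\alpha_{1,4}$ are precisely the two ribbon tableaux, of orientation types $({\bf h},{\bf v})$ and $({\bf v},{\bf h})$, appearing in the setup of the mixed BK move of Definition \ref{def:mixed BK}. The hypothesis that no $\alpha_{i,j}$ has a negative part is what guarantees that all of these weights are genuine partitions, so that the Pieri and ribbon language applies verbatim.

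\textbf{Step 2 (reduction to one octahedron relation).} Because $n=4$, the only admissible quadruple $a<b<c<d$ is $(1,2,3,4)$, so the whole $4$-hive is governed by the single octahedron recurrence \eqref{oct rec} whose poles lie along edges $13$ and $24$ and whose equator lies along edges $12,34$ and $14,23$. The content of the proposition is thus entirely a statement about how the values on the four interior edges $13,14,23,24$ are forced once edge $12$ reads $\sigma_l$ and edge $34$ reads $\omega_k$. Since the mixed BK move of Definition \ref{def:mixed BK} acts independently on each connected component of the ribbon, I would first restrict attention to a single connected component of $\alpha_{1,4}/\alpha_{2,3}$.

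\textbf{Step 3 (break-path matching, the crux).} On the two alternating faces $i_2=0$ and $i_1=0$ I would encode the vertical strips $\alpha_{1,4}/\alpha_{1,3}$ and $\alpha_{2,4}/\alpha_{2,3}$ by their break paths via Lemma \ref{lem:alt}, and on the two symmetric faces $i_4=0$ and $i_3=0$ I would encode the horizontal strips $\alpha_{1,3}/\alpha_{2,3}$ and $\alpha_{1,4}/\alpha_{2,4}$ via Lemma \ref{lem:sym}. The octahedron recurrence \eqref{oct rec} then determines the interior edge values, and hence pins down the row-by-row position of the horizontal strip and the column-by-column position of the vertical strip of the second chain in terms of those of the first. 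Substituting and comparing with Definition \ref{def:mixed BK} — every $i$ moves one column to the right, every $i+1$ moves one row up, and the top-right corner cell wraps around to the bottom-left — is the heart of the argument and the step I expect to be the main obstacle, since it requires controlling the $\max$ in \eqref{oct rec} exactly at the corner cell where the horizontal and vertical strips of the ribbon meet.

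\textbf{Step 4 (remaining cases).} The case $\alpha_{1,2}=\omega_k$, $\alpha_{3,4}=\sigma_l$ is entirely analogous, with the roles of Lemmas \ref{lem:alt} and \ref{lem:sym} interchanged so that the $i$'s now descend and the $i+1$'s move left. The purely symmetric case $(\sigma_l,\sigma_k)$ is handled by the same computation with both strips horizontal, using Lemma \ref{lem:sym} on all four faces, which reproduces the classical Bender--Knuth involution; and the purely alternating case $(\omega_l,\omega_k)$ follows by transposing every tableau, under which $\sigma_j$ and $\omega_j$ are exchanged, horizontal strips become vertical strips, and the classical Bender--Knuth involution becomes its transpose.
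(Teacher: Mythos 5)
Your Step 1 is correct and is exactly how the paper opens its proof: the four faces $i_4=0$, $i_3=0$, $i_2=0$, $i_1=0$ are $3$-hives of the types you list, and Lemmas \ref{lem:alt} and \ref{lem:sym} give the strip conditions, so the two chains form the ribbon tableaux of Definition \ref{def:mixed BK}. But there is a genuine gap, and it is precisely the part you defer in Step 3. Your Step 2 also misstates the structure of the problem: for $n=4$ the quadruple $(a,b,c,d)=(1,2,3,4)$ is indeed the only one, but \eqref{oct rec} imposes one relation for \emph{each} $\vec\jmath$ with $\sum j_k=m-2$, i.e.\ on the order of $\binom{m+1}{3}$ octahedra, not a single relation, and the passage from the chain $\lambda\subset\mu\subset\nu$ to $\rho$ is not controlled by one $\max$ at ``the corner cell.'' The paper's proof excavates the bottom face $(me_1,me_3,me_4)$ lattice point by lattice point, applying \eqref{oct rec} at each, to reveal the interior face $\{(i_1,1,i_3,i_4)\mid i_1+i_3+i_4=m-1\}$, which is a $3$-hive of type $(\lambda^{(m-1)},\omega_{k'}^{(m-1)};\rho^{(m-1)})$, and then reads $\rho/\lambda$ off the new break path.

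Concretely, the delicate $\max$ evaluations do not sit at a single cell where the strips meet: points strictly above the break path and points on its horizontal segments excavate trivially, and the interesting case is at every ``out elbow'' of the break path, where the inequality $\mu_{j+1}+a_{j-1}\leq\mu_j+a_j$ (from the horizontal-strip condition combined with $\mu_j=\lambda_j+(a_{j-1}-a_j)$) decides whether the revealed strip carries $\omega_{k_j}^{(m-1)}$ or $\omega_{k_j+1}^{(m-1)}$. Moreover the boundary case $\mu_{j+1}+a_{j-1}=\mu_j+a_j$ \emph{propagates} down an entire slanted run of the break path, one excavation feeding the next, as long as $\mu_{j+1}+a_{j-1}-a_j=\mu_j$ holds strip by strip; it is exactly this propagation that shifts the slanted steps of a component occupying rows $x,\ldots,y$ from positions $x+1,\ldots,y$ to $x,\ldots,y-1$ when the top row contains a $1$, and shows no shift occurs when every row contains a $2$ --- which is the row-by-row content of the mixed BK move. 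Without executing this case analysis your proposal is a correct setup plus a statement of intent; Steps 1 and 4 are fine (the paper likewise dispatches the other orientation cases as similar, citing \cite{Akh} for the $\omega_l,\omega_k$ case), but Step 3, which you yourself flag as the main obstacle, is the entire proof.
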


Before giving the proof of Proposition \ref{prop:main} we apply it to give the proof of the main periodicity theorem. To prove periodicity we need one more easy lemma.

\begin{Lem}\label{lem:rectangle}
Let $k\omega_m=(k,\ldots,k)$. There is one hive of type $(k\omega_m,\lambda;\mu)$ if an only if $\mu_i=\lambda_i+k$ for all $i$. There are zero such hives otherwise.
\end{Lem}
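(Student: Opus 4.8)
The plan is to reduce the statement to the Pieri rule via Theorem~\ref{thm:hives}, and then verify the ``one or zero hives'' count by directly constructing the unique hive when it exists and showing the construction is forced. Since $k\omega_m=(k,\ldots,k)$ is the weight of a one-dimensional representation (the $k$th power of the determinant of $GL_m$), the representation $V_{k\omega_m}$ is the one-dimensional determinant character raised to the $k$th power. Tensoring by a one-dimensional representation just shifts weights, so $V_{k\omega_m}\otimes V_\lambda\cong V_{\lambda+k\omega_m}$, which is irreducible. Hence the multiplicity of $V_\mu$ in $V_{k\omega_m}\otimes V_\lambda$ is $1$ if $\mu=\lambda+k\omega_m$, i.e.\ $\mu_i=\lambda_i+k$ for all $i$, and $0$ otherwise. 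By Theorem~\ref{thm:hives} the number of $3$-hives of type $(k\omega_m,\lambda;\mu)$ equals this multiplicity, which gives the claim.

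Alternatively, and more in the spirit of the hive analysis used elsewhere (this is the route I would actually write out, to keep the argument self-contained and combinatorial), I would iterate Lemma~\ref{lem:alt}. The weight $k\omega_m$ can be viewed as adding $k$ full columns, so I would decompose the northwest edge data and track the break paths. Applying Lemma~\ref{lem:alt} with $i=m$ (a full vertical strip of size $m$), the successive southwest-to-northeast strips are forced to have differences $\omega_m^{(m-j+1)}=\omega_{m-j+1}$ with $i_j=m-j+1$ strictly decreasing, since $\lambda$ and $\mu$ each have exactly $m$ parts and a vertical strip of size $m$ must occupy one cell in every row; this forces $\mu_j=\lambda_j+1$ for all $j$. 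Iterating this $k$ times (equivalently treating $k\omega_m$ as $k$ stacked fundamental weights $\omega_m$), the only compatible type forces $\mu_j=\lambda_j+k$ for every $j$, and in that case the interior hive values are completely determined by the rhombus/break-path conditions, giving exactly one hive.

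Concretely, when $\mu_i=\lambda_i+k$ for all $i$, I would exhibit the unique hive explicitly: set $f_{i,j,k}$ along each southwest-to-northeast strip to increase by the prescribed amount until the break path and remain constant afterward, and check that the three families of rhombus inequalities hold with the right equalities forced by the minuscule/symmetric Pieri shape. The uniqueness is immediate from Lemma~\ref{lem:alt}, since once the type is fixed every break path is determined. The only real content is ruling out all other $\mu$, which follows because any deviation $\mu_j\ne\lambda_j+k$ would violate the constraint that $k\omega_m$ adds exactly $k$ to every part of a length-$m$ weight.

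The main obstacle is essentially bookkeeping rather than a genuine difficulty: one must be careful that $\lambda$ and $\mu$ are treated as full-length $GL_m$-weights with exactly $m$ parts, so that ``vertical strip of size $m$'' means one box per row and the Pieri rule leaves no freedom. Given that, the result is immediate, so I expect the proof to be a short paragraph; the authors indeed state it as an ``easy lemma.''
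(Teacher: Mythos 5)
Your first paragraph is a complete and correct proof, and it is surely what the authors intended: the paper states this as an ``easy lemma'' and gives no proof at all, so there is nothing to diverge from. Since $V_{k\omega_m}=\det^k$ is one-dimensional, $V_{k\omega_m}\otimes V_\lambda\cong V_{\lambda+k\omega_m}$ is irreducible, and Theorem~\ref{thm:hives} immediately gives exactly one hive when $\mu_i=\lambda_i+k$ for all $i$ and zero otherwise. One small caveat on your alternative route: Lemma~\ref{lem:alt} is stated only for $\omega_i$ (the $k=1$ case), so ``iterating it $k$ times'' implicitly uses $V_{k\omega_m}\cong V_{\omega_m}^{\otimes k}$ --- which is again the determinant argument --- or else should be replaced by a direct rhombus argument: if the northwest edge has all successive differences equal to $k$, the two families of rhombus inequalities force the differences along each parallel strip to interlace, hence to be identically $k$, so the whole hive is the unique affine one and $\mu_j=\lambda_j+k$ follows; but since your first paragraph already settles the lemma, this is only a matter of polish.
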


\begin{proof}[Proof of Periodicity (Theorem \ref{thm:periodicity})]
Let $T$ be a $\left(m\times \frac{\lvert \gamma\rvert}{m}\right)$-rectangular $\delta$-semistandard tableau with content $\gamma$. To the pair $(\delta,\gamma)$ associate the corresponding $GL_m$ sym-alt sequence $\vec\lambda=(\lambda^1,\ldots,\lambda^n)$ as described above and let $\mu$ be the $\left(m\times \frac{\lvert \gamma\rvert}{m}\right)$-rectangular shape. Let $f_{\vec i}$ be the corresponding $(n+1)$-hive of type $(\vec\lambda;\mu)$ and $\alpha_{ij}$ the dominant weight along the $(me_i,me_j)$-edge for all $i, j$. 

Consider the following staircase-shaped set of lattice points indexed by pairs of integers 
\begin{align*}
St_{n+1}=\{(i,j)\mid i\leq j\leq i+n+1\}.
\end{align*}
The lattice point $(i,j)$ is viewed as being in the $i$th row and $j$th column as in matrix notation with indices increasing down and to the right. Label each lattice point with the weight $\alpha_{ij}$ where indices are taken mod $(n+1)$. In particular, $\alpha_{ii}=\vec 0=\alpha_{i,i+n+1}$ for all $i$, and the first line is labelled with $\alpha_{11},\ldots,\alpha_{1n}$ (encoding the tableau $T$) and ends at $\alpha_{1,n+1}=\vec0$. Also, note that $\alpha_{j,i+n+1}=\alpha_{ij}^*$ since reading successive differences in the opposite order along an edge of a hive gives the dual weight.

Here is an example diagram with $n=4$.
\begin{align*}
\begin{array}{ccccccccccc}
\alpha_{11}&\alpha_{12}&\alpha_{13}&\alpha_{14}&\alpha_{15}&\alpha_{16}\\
&\alpha_{22}&\alpha_{23}&\alpha_{24}&\alpha_{25}&\alpha_{26}&\alpha_{27}\\
&&\alpha_{33}&\alpha_{34}&\alpha_{35}&\alpha_{36}&\alpha_{37}&\alpha_{38}\\
&&&\alpha_{44}&\alpha_{45}&\alpha_{46}&\alpha_{47}&\alpha_{48}&\alpha_{49}\\
&&&&\alpha_{55}&\alpha_{56}&\alpha_{57}&\alpha_{58}&\alpha_{59}&\alpha_{510}\\
&&&&&\alpha_{66}&\alpha_{67}&\alpha_{68}&\alpha_{69}&\alpha_{610}&\alpha_{611}\\
\end{array}
\end{align*}
\begin{align*}
\begin{array}{ccccccccccc}
\vec 0&\lambda^1&\alpha_{13}&\alpha_{14}&\mu&\vec0\\
&\vec 0&\lambda^2&\alpha_{24}&\alpha_{25}&{\lambda^1}^*&\vec0\\
&&\vec 0&\lambda^3&\alpha_{35}&\alpha_{13}^*&{\lambda^2}^*&\vec0\\
&&&\vec 0&\lambda^4&\alpha_{14}^*&\alpha_{24}^*&{\lambda^3}^*&\vec0\\
&&&&\vec 0&\mu^*&\alpha_{25}^*&\alpha_{35}^*&{\lambda^4}^*&\vec0\\
&&&&&\vec 0&\lambda^1&\alpha_{13}&\alpha_{14}&\mu&\vec0\\
\end{array}
\end{align*}

Consider first the weights for the triangular set of indices 
\begin{align*}
Tr=\{(i,j)\mid 1\leq i \leq n, i\leq j\leq n+1\}. 
\end{align*}
Note that each weight $\alpha_{ij}$ for $(i,j)\in Tr$ has no negative parts, so can be interpreted as a partition. By Proposition \ref{prop:main}, the partitions $\alpha_{ij},\alpha_{ij+1},\alpha_{i+1j},\alpha_{i+1j+1}$ around each unit square in this subdiagram are related by the $\delta$-BK involution. Similarly, every weight $\alpha_{ij}$ for $(i,j)$ in the dual triangular set of indices $Tr^*=\{(i,j)\mid n+2\leq j\leq 2n+2, j\leq i\leq n+1\}$ has no positive parts, and when interpreted appropriately each unit square satisfies the $\delta$-BK involution. Since $\alpha_{j,i+n+1}=\alpha_{ij}^*$, each $\alpha_{j,i+n+1}+\mu$ is the complement of the partition $\alpha_{ij}$ in $\mu$. Modify the diagram by replacing $\alpha_{ij}$ with $\alpha_{ij}+\mu$ for all $(i,j)\in Tr^*$. By Lemma \ref{lem:rectangle}, $\alpha_{i,n+1}=\alpha_{i,n+2}+\mu$ for all valid $i$ and $\alpha_{n+2,j}=\alpha_{n+1,j}$ for all valid $j$, so after the modification the $n+1$ and $n+2$ columns are equal and the $n+1$ and $n+2$ rows are equal, so we can merge them. 

The result is a diagram consisting of partitions that are related by the BK involution around each unit square. Since $\partial(T)=t_{n-1}\cdots t_1(T)$ by Corollary \ref{cor:BK decomp}, each row encodes the promotion tableau of the previous row and periodicity follows since $\alpha_{1j}=\alpha_{1+n+2,j+n+2}$.
\end{proof}

\begin{Rem}
Alternatively, one could consider hives for $SL_m$ where the hive values are in $\frac{1}{m}\mathbb Z$. Then the resulting staircase diagram of $\alpha_{ij}$ (when properly interpreted as partitions) would be precisely the promotion diagram of $T$, without having to shift and merge as in the proof of Theorem \ref{thm:periodicity}. But then we would have to deal with fractions in the proof of Proposition \ref{prop:main}.
\end{Rem}

\begin{Rem}
The previous proof can be interpreted in terms of Fock--Goncharov cluster variables on the Satake fiber where each triangulation of the $n$-gon corresponds to a cluster coordinate system. One begins with the cluster coordinates corresponding to the fan triangulation with the values along the edges $(1,i)$ encoding the $\delta$-semistandard tableau. Then a sequence of mutations, equivalent to applications of the octahedron recurrence in the hive as in the proof of Proposition \ref{prop:main}, flips an edge to get a neighboring triangulation. Flipping each edge of the initial fan triangulation yields the neighboring fan triangulation with edges $(2,i)$ and corresponds to promotion. Rotating $n$ times gets back to the original coordinate system.
\end{Rem}

\begin{proof}[Proof of Proposition \ref{prop:main}]
We will prove the ``mixed'' case $\alpha_{1,2}=\sigma_l$ and $\alpha_{3,4}=\omega_k$, the other cases being similar (see \cite{Akh} for the $\omega_l,\omega_k$ case).

To simplify notation let $\lambda=\alpha_{2,3}, \mu=\alpha_{1,3}, \nu=\alpha_{1,4}$ and $\rho=\alpha_{2,4}$, which we think of as two sequences of nested partitions $\lambda\subset \mu\subset \nu$ and $\lambda\subset \rho\subset \nu$. By Lemmas \ref{lem:alt} and \ref{lem:sym} $\mu/\lambda$ and $\nu/\rho$ are horizontal strips and $\nu/\mu$ and $\rho/\lambda$ are verticals trips. We interpret the sequence $\lambda\subset \mu\subset \nu$ as a ribbon-shaped tableau $T$ with $1$'s in the cells of $\mu/\lambda$ and $2$'s in the cells of $\nu/\mu$. Likewise, $\lambda\subset \rho\subset \nu$ is interpreted as a ribbon-shaped tableau $T'$ with $1$'s in the cells of $\rho/\lambda$ and $2$'s in the cells of $\nu/\rho$. A connected component of $T$ can be in one of the following two forms, depending on whether the top row contains a $2$ or not:

\begin{align*}
\ytableausetup{boxsize=3.5mm}
\begin{ytableau}\none&\none&\none&\none&2\\\none&\none&\none&\none&2\\\none&\none&\none&\none&2\\\none&\none&1&1&2\\\none&\none&2\\1&1&2\end{ytableau}
\hspace{10mm}
\begin{ytableau}\none&\none&\none&\none&1&1&1&1\\\none&\none&\none&\none&2\\\none&\none&\none&\none&2\\\none&\none&1&1&2\\\none&\none&2\\1&1&2\end{ytableau}
\end{align*}

The first case can be characterized as having a $2$ in every row, whereas the second case has a two in every row but the top. In the first case, we must show that the corresponding connected component of $T'$ contains a $1$ in every row. In the second case, we must show that it contains a $1$ in every row but the last. The rest is determined by the common shape $\nu$.
\begin{align*}
\ytableausetup{boxsize=3.5mm}
\begin{ytableau}\none&\none&\none&\none&1\\\none&\none&\none&\none&1\\\none&\none&\none&\none&1\\\none&\none&1&2&2\\\none&\none&1\\1&2&2\end{ytableau}
\hspace{10mm}
\begin{ytableau}\none&\none&\none&\none&1&2&2&2\\\none&\none&\none&\none&1\\\none&\none&\none&\none&1\\\none&\none&1&2&2\\\none&\none&1\\2&2&2\end{ytableau}
\end{align*}

Orient the $4$-hive such that it is balanced on the $(me_2,me_4)$ edge oriented vertically, so that it's labelled by $\rho$ when read from top to bottom. View the $4$-hive from above, so that the horizontally oriented $(me_1,me_3)$ edge , labelled by $\mu$ when read left to right, is closest to the viewer. The $3$-hives with vertices $(me_1,me_2,me_3)$ and $(me_1,me_3,me_4)$ are glued along $(me_1,me_3)$ and are closest to the viewer (see Figure \ref{fig:4hive example}). We will call the $3$-hive with vertices $(me_1,me_2,me_3)$ the top face and the one with $(me_1,me_3,me_4)$ the bottom face.

The northwestern external strip of the top face is labelled by the weight $\alpha_{1,2}=\sigma_l$. There are $m+1$ southwest-to-northeast oriented strips in this hive with the $j$th one labelled by $\sigma_{a_j}^{(m-j)}$ such that $l=a_0\geq a_1\geq \cdots\geq a_{m}=0$. Note that the index $j$ runs from $0$ to $m$ corresponding to strips from west to east. From this it follows that the hive values along the $j$th strip increase by $a_j$ in the first step and are constant thereafter. Hence, the first $m-1$ parts of $\lambda$ are given by the successive differences along the row one above the main horizontal: $\lambda_i=f_{(m-(i+1),1,i,0)}-f_{(m-i,1,i-1,0)}$ for $1\leq i\leq m-1$. Recall also that $\mu_j=\lambda_j+(a_{j-1}-a_{j})$.

By similar reasoning applied to the $(me_1,me_2,me_4)$ face (which is obscured from view), we see that the first $m-1$ parts of $\rho$ are given by the successive differences along the northwest-to-southeast oriented strip that is one step in from the boundary: $\rho_i=f_{(m-(i+1),1,0,i)}-f_{(m-i,1,0,i-1)}$ for $1\leq i\leq m-1$. Let $z_j$ denote the differences $f_{(m-1-j,1,0,j)}-f_{(m-j,0,0,j)}$, so that $\nu_j=\rho_j+(z_{j-1}-z_{j})$. 

The southeastern external strip of the bottom face is labelled by the weight $\gamma_{3,4}=\omega_k$ when read northeast to southwest. The $j$th northeast-to-southwest oriented strip is labelled by $\omega_{k_j}$ where we index also from west to east from $0$ to $m$ to coincide with the top face. Then $k_0=0$, $k_{m}=k$, and by Lemma \ref{lem:alt} we have $k_{j}=k_{j-1}$ if and only if $\nu_j=\mu_j$ and $k_{j}=k_{j-1}+1$ if and only if $\nu_j=\mu_j+1$. 

We view the break path of this face as beginning at vertex $me_1$. The first edge of the path connects $me_1=(m,0,0,0)$ to $(m-1,0,1,0)$ if $k_1=k_0$ and to $(m-1,0,0,1)$ if $k_1=k_0+1$. If the path is currently at vertex $v$, then the next edge ends at $v+(-1,0,1,0)$ if $k_{j}=k_{j-1}$ (horizontal step) and at $v+(-1,0,0,1)$ if $k_{j}=k_{j-1}$ (slanted step). Hence, $\nu_j=\mu_j$ if strips $j-1$ and $j$ have a horizontal step between them in the break path and $\nu_j=\mu_j+1$ if there is a slanted step. The hive values along a northeast-to-southwest strip of the bottom face increase by $1$ until they reach the break path, after which they remain constant.

We will excavate each lattice point of the bottom face using the octahedron recurrence to reveal the new two-dimensional face with coordinates $\{(i_1,1,i_3,i_4)\mid i_1+i_3+i_4=m-1\}$. This $3$-hive has type $(\lambda^{(m-1)},\omega_{k'}^{(m-1)};\rho^{(m-1)})$ where $k'=k$ or $k-1$. This will reveal a new break path from which we can determine the rows of $\rho/\lambda$. In particular let $C$ be a connected component of $\nu/\lambda$. If every row of $C$ contains a cell from $\nu/\mu$ we will show that every row of $C$ contains a cell from $\rho/\lambda$, in agreement with the mixed BK move. On the other hand, if every row of $C$ except for the top row contains a cell from $\nu/\mu$, then we will show that every row of $C$ except for the bottom row contains a cell of $\rho/\lambda$, also in agreement with the BK move. To do so, we analyze the possible change in break path upon excavation. In particular, we will see that the $j$th newly revealed northeast-to-southwest oriented strip is labelled by $\omega_{k_j}^{(m-1)}$ or $\omega_{k_j+1}^{(m-1)}$ for all $j$.

First consider excavating a point on the main horizontal on the $j$th strip. The local hive values are the following.
\begin{align*}
\begin{array}{ccccccc}
&&\sum_1^{j-1}\mu_i+a_{j-1}&&\sum_1^{j}\mu_i+a_{j}&&\\
&\sum_1^{j-1}\mu_i&&\sum_1^{j}\mu_i&&\sum_1^{j+1}\mu_i&\\
&&\sum_1^{j}\mu_i+x&& \sum_1^{j+1}\mu_i+y&&
\end{array}
\end{align*}
The possible $x$ and $y$ values are $(x=0,y=0)$, $(x=0,y=1)$, and $(x=1,y=1)$. Since $\lambda\subset \mu$ differ by a horizontal strip we have $\mu_{j+1}\leq \lambda_j$. Combining with $\mu_j=\lambda_j+(a_{j-1}-a_{j})$, we have that $\mu_{j+1}\leq \mu_j-(a_{j-1}-a_{j})$. We will use the inequality $\mu_{j+1}+a_{j-1}\leq \mu_j+a_{j}$ in the calculation of the octahedron recurrence below. 

Consider first the case of $(x=1,y=1)$ which corresponds to the excavation point $(m-j,0,j,0)$ being strictly above the break path. Applying the octahedron recurrence gives the following.
\begin{align*}
&\max\left(\sum^{j-1}\mu_i+a_{j-1}+\sum^{j+1}\mu_i+1,2\left(\sum^j\mu_i\right)+a_{j}+1\right)-\sum^j\mu_i\\
=&\max\left(\sum^{j-1}\mu_i+\mu_{j+1}+a_{j-1}+1,\sum^{j-1}\mu_i+\mu_j+a_{j}+1\right)\\
=&\sum^{j}\mu_i+a_{j}+1.
\end{align*}
The new value is one greater than the value directly to the northeast at $(m-j-1,1,j,0)$. By similar reasoning, one can show that excavating any point on the bottom face that is strictly above the break path produces a similar result - the new value is one greater than the value at the (already revealed) lattice point directly to the northeast. Hence, we may assume that every point strictly above the break path has been excavated, so in particular we know that the new values along each strip have differences $\omega_{q_j}^{(m-1)}$ for some $q_j\geq k_j$. 

Now consider excavating a point on the break path. First consider excavating a point on the horizontal segment of the break path that is closest to the main horizontal edge of the $4$-hive. Suppose that this is on the $c$th horizontal down from the main horizontal (so a lattice point $(m-j-c,0,j,c)$). The local data at such a point along the $j$ strip is the following.
\begin{align*}
\begin{array}{ccccccc}
&&\sum_1^{j-1}\mu_i+a_{j-1}+c&&\sum_1^{j}\mu_i+a_{j}+c&&\\
&\sum_1^{j-1}\mu_i+c&&\sum_1^{j}\mu_i+c&&\sum_1^{j+1}\mu_i+c&\\
&&\sum_1^{j}\mu_i+c+x&& \sum_1^{j+1}\mu_i+c+y&&
\end{array}
\end{align*}

If both $(m-c-j,0,j,c)$ and $(m-c-j-1,0,j+1,c)$ are on the break path, then we have that $x=0$ and $y=0$, which gives
\begin{align*}
&\max\left(\sum^{j-1}\mu_i+a_{j-1}+\sum^{j+1}\mu_i+2c,2\left(\sum^j\mu_i\right)+a_{j}+2c\right)-\sum^j\mu_i-c\\
=&\sum^{j}\mu_i+a_{j}+c.
\end{align*}
This value is equal to the value directly to the northeast. The remaining excavated values must be equal (since we know each strip is an $\omega_i^{(m-1)}$ for some $i$), so the revealed strip has differences given by the weight $\omega_{k_{j}}^{(m-1)}$ from northeast to southwest.

Now suppose that the excavation point is at an ``out elbow'' on the break path, so that $x=0, y=1$. This is the interesting case.
\begin{align*}
&\max\left(\sum^{j-1}\mu_i+a_{j-1}+\sum^{j+1}\mu_i+1+2c,2\left(\sum^j\mu_i\right)+a_{j}+2c\right)-\sum^j\mu_i-c\\
=&\max\left(\sum^{j-1}\mu_i+\mu_{j+1}+a_{j-1}+c+1,\sum^{j-1}\mu_i+\mu_j+a_{j}+c\right)\\
\end{align*}
This is equal to $\sum^j \mu_i+a_{j}+c$ if $\mu_{j+1}+a_{j-1}<\mu_j+a_{j}$ and is equal to $\sum^j \mu_i+a_{j}+c+1$ if $\mu_{j+1}+a_{j-1}=\mu_j+a_{j}$. In the former case, the strip is labelled by $\omega_{k_{j}}^{(m-1)}$. In the latter case, the new value is one greater than the value at the lattice point to the northeast, and the remaining values along the revealed strip can be shown to be equal. Hence, the $j$th strip has differences $\omega_{k_{j}+1}^{(m-1)}$.

Now consider excavating the point on the break path directly after an out elbow. The local data is the following where $z=-1$ or $z=0$ depending on the two cases just mentioned ($z=0$ being the ``special case'').
\begin{align*}
\begin{array}{ccccccc}
&&\sum_1^{j-1}\mu_i+a_{j-1}+c+z&&\sum_1^{j}\mu_i+a_{j}+c&&\\
&&&\sum_1^{j}\mu_i+c&&&\\
&&\sum_1^{j}\mu_i+c&& \sum_1^{j+1}\mu_i+c+1&&
\end{array}
\end{align*}
Computing we have
\begin{align*}
&\max\left(\sum^{j-1}\mu_i+a_{j-1}+\sum^{j+1}\mu_i+1+2c,2\left(\sum^j\mu_i\right)+a_{j}+2c\right)-\sum^j\mu_i-c\\
=&\max\left(\sum^{j-1}\mu_i+\mu_{j+1}+a_{j-1}+c+1+z,\sum^{j-1}\mu_i+\mu_j+a_{j}+c\right)\\
\end{align*}
If $z=-1$, then this evaluates to $\sum^j \mu_i+a_{j}+c$. If $z=0$, then we have the same two expressions in the maximum as before, so two cases can occur once again, depending on whether $\mu_{j+1}+a_{j-1}<\mu_j+a_{j}$ or $\mu_{j+1}+a_{j-1}=\mu_j+a_{j}$. This corresponds to the current strip having differences $\omega_{k_j}^{(m-1)}$ or $\omega_{k_j+1}^{(m-1)}$. The same analysis applies to the next lattice point on the slanted portion of the break path and so forth. Hence, the special case at an out-elbow can propagate down a slanted portion of the break path as long as $\mu_{j+1}+a_{j-1}-a_{j}=\mu_{j}$ holds for the current strip $j$. Hence, after excavation each weight label along a revealed strip agrees with the weight label $\omega_{k_j}$, except for a collection of subsequences of indices $x,\ldots, y$ for which the weight label is $\omega_{k_j+1}$ precisely when $\mu_{j+1}+a_{j-1}-a_{j}=\mu_j$ for all $j\in \{x,\ldots,y\}$ and there is an out elbow along strip $x$. The possible break path changes are shown below.

\begin{center}
\scalemath{.5}{
\begin{tikzpicture}
  \begin{ternaryaxis}[
  clip=false,
    xmin=0,
    ymin=0,
    zmin=0,
    xmax=10,
    ymax=10,
    zmax=10,
    yscale=-1,
    xtick={0, ..., 10},
    ytick={0, ..., 10},
    ztick={0, ..., 10},
    xticklabels={},
    yticklabels={},
    zticklabels={},
    major tick length=0,
        minor tick length=0,
            every axis grid/.style={gray},
    minor tick num=1,
    ]
          
   \addplot3[color=red,ultra thick] coordinates {(0,10,0)(2,8,0)(2,6,2)(5,3,2)(5,1,4)(6,1,4)};
   
   \addplot3[thick] coordinates {(0,7,3)(3,7,0)};
   \addplot3[thick] coordinates {(0,6,4)(4,6,0)};
   \addplot3[thick] coordinates {(0,5,5)(5,5,0)};
   \addplot3[thick] coordinates {(0,4,6)(6,4,0)};
   \addplot3[thick] coordinates {(0,3,7)(7,3,0)};
   
  \end{ternaryaxis}
\end{tikzpicture}
\begin{tikzpicture}
  \begin{ternaryaxis}[
  clip=false,
    xmin=0,
    ymin=0,
    zmin=0,
    xmax=11,
    ymax=11,
    zmax=11,
    yscale=-1,
    xtick={0, ..., 10},
    ytick={0, ..., 10},
    ztick={0, ..., 10},
    xticklabels={},
    yticklabels={},
    zticklabels={},
    major tick length=0,
        minor tick length=0,
            every axis grid/.style={gray},
    minor tick num=1,
    ]
          
   \addplot3[color=red,ultra thick] coordinates {(1,10,0)(3,8,0)(3,6,2)(6,3,2)(6,1,4)(7,1,4)};
   \addplot3[color=blue,ultra thick] coordinates {(0,10,1)(2,8,1)(2,7,2)(5,4,2)(5,1,5)};

   \addplot3[thick] coordinates {(1,0,10)(0,1,10)(0,10,1)(1,10,0)};
   \addplot3[thick] coordinates {(0,1,10)(9,1,1)(0,10,1)};
   \addplot3[thick] coordinates {(1,0,10)(1,10,0)};
   
   \addplot3[thick] coordinates {(0,7,4)(3,7,1)};
   \addplot3[thick] coordinates {(0,6,5)(4,6,1)};
   \addplot3[thick] coordinates {(0,5,6)(5,5,1)};
   \addplot3[thick] coordinates {(0,4,7)(6,4,1)};
   \addplot3[thick] coordinates {(0,3,8)(7,3,1)};
   
     \addplot3[color=white,thick] coordinates {(0,10,1)(0,11,0)(1,10,0)};
    \addplot3[color=white,thick] coordinates {(1,0,10)(0,0,11)(0,1,10)};
   
  \end{ternaryaxis}
\end{tikzpicture}
\begin{tikzpicture}
  \begin{ternaryaxis}[
  clip=false,
    xmin=0,
    ymin=0,
    zmin=0,
    xmax=11,
    ymax=11,
    zmax=11,
    yscale=-1,
    xtick={0, ..., 10},
    ytick={0, ..., 10},
    ztick={0, ..., 10},
    xticklabels={},
    yticklabels={},
    zticklabels={},
    major tick length=0,
        minor tick length=0,
            every axis grid/.style={gray},
    minor tick num=1,
    ]
          
   \addplot3[color=red,ultra thick] coordinates {(1,10,0)(3,8,0)(3,6,2)(6,3,2)(6,1,4)(7,1,4)};
   \addplot3[color=blue,ultra thick] coordinates {(0,10,1)(2,8,1)(2,6,3)(5,3,3)(5,1,5)};

   \addplot3[thick] coordinates {(1,0,10)(0,1,10)(0,10,1)(1,10,0)};
   \addplot3[thick] coordinates {(0,1,10)(9,1,1)(0,10,1)};
   \addplot3[thick] coordinates {(1,0,10)(1,10,0)};
   
   \addplot3[thick] coordinates {(0,7,4)(3,7,1)};
   \addplot3[thick] coordinates {(0,6,5)(4,6,1)};
   \addplot3[thick] coordinates {(0,5,6)(5,5,1)};
   \addplot3[thick] coordinates {(0,4,7)(6,4,1)};
   \addplot3[thick] coordinates {(0,3,8)(7,3,1)};
   
    \addplot3[color=white,thick] coordinates {(0,10,1)(0,11,0)(1,10,0)};
    \addplot3[color=white,thick] coordinates {(1,0,10)(0,0,11)(0,1,10)};
   
  \end{ternaryaxis}
\end{tikzpicture}}
\end{center}

Let $C$ be a connected component of the ribbon shape $\nu/\lambda$ and suppose that the cells of $C$ are contained in the rows numbered $x,\ldots, y$. Consider first the case that $C$ contains a $1$ entry in the top row $x$. Then $C$ contains a single $2$ in each of the rows $x+1,\ldots,y$. In particular, the break path has a slanted step between strips $j-1$ and $j$ for $j$ such that $x+1\leq j\leq y$ and a horizontal step between strips $x-1$ and $x$. In particular, there is an out elbow along strip $x$. Furthermore, $\mu_{j+1}=\mu_j-(a_{j-1}-a_{j})$ for all $j$ such that $x+1\leq j\leq y$ and $\mu_{y+1}< \mu_y-(a_{y-1}-a_{y})$. This implies that the above-mentioned special case occurs along each strip from strip $x$ to strip $y-1$. The new break path has a slanted step between strips $j-1$ and $j$ for all $j$ such that $x\leq j\leq y-1$ and horizontal step between strips $y-1$ and $y$. 

Now suppose that the connected component $C$ contains a $2$ in each of the rows $x,\ldots, y$. Then the break path has a slanted step between strips $j-1$ and $j$ for all $j$ in the range $x\leq j\leq y$. But $\mu_{x}<\mu_{x-1}-(a_{x-2}-a_{x-1})$, so the special case cannot occur along strip $x$, and therefore neither along any strip $j$ in the range $x\leq j\leq y$. Hence, this agrees with the mixed BK move.
\end{proof}

\begin{figure}[b]
\centering
\minipage[b]{0.5\textwidth}
\centering
\scalemath{.4}{
\begin{tikzpicture}
  \begin{ternaryaxis}[ylabel=$\scalemath{2}{\sigma_6}$,xlabel=$\scalemath{2}{\lambda}$,zlabel=$\scalemath{2}{\mu}$,
    clip=false,xticklabels={},yticklabels={},zticklabels={},xmin=0,
    ymin=0,
    zmin=0,
    xmax=14,
    ymax=14,
    zmax=14,
    xtick={2,4,6,8,10,12},
    ytick={2,4,6,8,10,12},
    ztick={2,4,6,8,10,12},
    major tick length=0,
    %axis .style={opacity=.1}
    %%opacity=0.8,
    %%clip=false
]    
  	\node at (axis cs:0,24,0) {$\scalemath{1.75}{0}$};
	\node at (axis cs:2,22,0) {$\scalemath{1.75}{6}$};
	\node at (axis cs:4,20,0) {$\scalemath{1.75}{6}$};
	\node at (axis cs:6,18,0) {$\scalemath{1.75}{6}$};
	\node at (axis cs:8,16,0) {$\scalemath{1.75}{6}$};
	\node at (axis cs:10,14,0) {$\scalemath{1.75}{6}$};
	\node at (axis cs:12,12,0) {$\scalemath{1.75}{6}$};
	\node at (axis cs:14,10,0) {$\scalemath{1.75}{6}$};
	
	\node at (axis cs:0,22,2) {$\scalemath{1.75}{9}$};
	\node at (axis cs:2,20,2) {$\scalemath{1.75}{14}$};
	\node at (axis cs:4,18,2) {$\scalemath{1.75}{14}$};
	\node at (axis cs:6,16,2) {$\scalemath{1.75}{14}$};
	\node at (axis cs:8,14,2) {$\scalemath{1.75}{14}$};
	\node at (axis cs:10,12,2) {$\scalemath{1.75}{14}$};
	\node at (axis cs:12,10,2) {$\scalemath{1.75}{14}$};
	
	\node at (axis cs:0,20,4) {$\scalemath{1.75}{15}$};
	\node at (axis cs:2,18,4) {$\scalemath{1.75}{20}$};
	\node at (axis cs:4,16,4) {$\scalemath{1.75}{20}$};
	\node at (axis cs:6,14,4) {$\scalemath{1.75}{20}$};
	\node at (axis cs:8,12,4) {$\scalemath{1.75}{20}$};
	\node at (axis cs:10,10,4) {$\scalemath{1.75}{20}$};
	
	\node at (axis cs:0,18,6) {$\scalemath{1.75}{21}$};
	\node at (axis cs:2,16,6) {$\scalemath{1.75}{23}$};
	\node at (axis cs:4,14,6) {$\scalemath{1.75}{23}$};
	\node at (axis cs:6,12,6) {$\scalemath{1.75}{23}$};
	\node at (axis cs:8,10,6) {$\scalemath{1.75}{23}$};
	
	\node at (axis cs:0,16,8) {$\scalemath{1.75}{24}$};
	\node at (axis cs:2,14,8) {$\scalemath{1.75}{26}$};
	\node at (axis cs:4,12,8) {$\scalemath{1.75}{26}$};
	\node at (axis cs:6,10,8) {$\scalemath{1.75}{26}$};
	
	\node at (axis cs:0,14,10) {$\scalemath{1.75}{27}$};
	\node at (axis cs:2,12,10) {$\scalemath{1.75}{27}$};
	\node at (axis cs:4,10,10) {$\scalemath{1.75}{27}$};
	
	\node at (axis cs:0,12,12) {$\scalemath{1.75}{28}$};
	\node at (axis cs:2,10,12) {$\scalemath{1.75}{28}$};
	
	\node at (axis cs:0,10,14) {$\scalemath{1.75}{29}$};

  \end{ternaryaxis}
\end{tikzpicture}}

\vspace{-5.2mm}

\hspace{.0000000001mm}

\scalemath{.4}{
\begin{tikzpicture}
  \begin{ternaryaxis}[ylabel=$\scalemath{2}{\nu}$,xlabel=$\scalemath{2}{\omega_4}$,xlabel style={xshift=0.5cm},ylabel style={xshift=-0.5cm},
    clip=false,xticklabels={},yticklabels={},zticklabels={},xmin=0,
    ymin=0,
    zmin=0,
    xmax=14,
    ymax=14,
    zmax=14,
    yscale=-1,
    xtick={2,4,6,8,10,12},
    ytick={2,4,6,8,10,12},
    ztick={2,4,6,8,10,12},
    major tick length=0,
    %axis .style={opacity=.1}
    %%opacity=0.8,
    %%clip=false
]    
  	\node at (axis cs:0,24,0) {$\scalemath{1.75}{0}$};
	
	\node at (axis cs:0,22,2) {$\scalemath{1.75}{9}$};
	\node at (axis cs:2,22,0) {$\scalemath{1.75}{10}$};
	
	\node at (axis cs:0,20,4) {$\scalemath{1.75}{15}$};
	\node at (axis cs:2,20,2) {$\scalemath{1.75}{16}$};
	\node at (axis cs:4,20,0) {$\scalemath{1.75}{16}$};
	
	\node at (axis cs:0,18,6) {$\scalemath{1.75}{21}$};
	\node at (axis cs:2,18,4) {$\scalemath{1.75}{22}$};
	\node at (axis cs:4,18,2) {$\scalemath{1.75}{22}$};
	\node at (axis cs:6,18,0) {$\scalemath{1.75}{22}$};
	
	\node at (axis cs:0,16,8) {$\scalemath{1.75}{24}$};
	\node at (axis cs:2,16,6) {$\scalemath{1.75}{25}$};
	\node at (axis cs:4,16,4) {$\scalemath{1.75}{26}$};
	\node at (axis cs:6,16,2) {$\scalemath{1.75}{26}$};
	\node at (axis cs:8,16,0) {$\scalemath{1.75}{26}$};
	
	\node at (axis cs:0,14,10) {$\scalemath{1.75}{27}$};
	\node at (axis cs:2,14,8) {$\scalemath{1.75}{28}$};
	\node at (axis cs:4,14,6) {$\scalemath{1.75}{29}$};
	\node at (axis cs:6,14,4) {$\scalemath{1.75}{30}$};
	\node at (axis cs:8,14,2) {$\scalemath{1.75}{30}$};
	\node at (axis cs:10,14,0) {$\scalemath{1.75}{30}$};
	
	\node at (axis cs:0,12,12) {$\scalemath{1.75}{28}$};
	\node at (axis cs:2,12,10) {$\scalemath{1.75}{29}$};
	\node at (axis cs:4,12,8) {$\scalemath{1.75}{30}$};
	\node at (axis cs:6,12,6) {$\scalemath{1.75}{31}$};
	\node at (axis cs:8,12,4) {$\scalemath{1.75}{32}$};
	\node at (axis cs:10,12,2) {$\scalemath{1.75}{32}$};
	\node at (axis cs:12,12,0) {$\scalemath{1.75}{32}$};
	
	\node at (axis cs:0,10,14) {$\scalemath{1.75}{29}$};
	\node at (axis cs:2,10,12) {$\scalemath{1.75}{30}$};
	\node at (axis cs:4,10,10) {$\scalemath{1.75}{31}$};
	\node at (axis cs:6,10,8) {$\scalemath{1.75}{32}$};
	\node at (axis cs:8,10,6) {$\scalemath{1.75}{33}$};
	\node at (axis cs:10,10,4) {$\scalemath{1.75}{33}$};
	\node at (axis cs:12,10,2) {$\scalemath{1.75}{33}$};
	\node at (axis cs:14,10,0) {$\scalemath{1.75}{33}$};
	
	\addplot3[color=red,ultra thick] coordinates {(0,14,0)(2,12,0)(2,8,4)(8,2,4)(8,0,6)};

  \end{ternaryaxis}
\end{tikzpicture}}
\subcaption{top faces \label{top break paths}}
\endminipage
\minipage[b]{0.5\textwidth}
\centering
\scalemath{.4}{
\begin{tikzpicture}
  \begin{ternaryaxis}[rotate=-30,ylabel=$\scalemath{2}{\sigma_6}$,xlabel=$\scalemath{2}{\rho}$,zlabel=$\scalemath{2}{\nu}$,zlabel style={xshift=-0.5cm},clip=false,xticklabels={},yticklabels={},zticklabels={},xmin=0,
    ymin=0,
    zmin=0,
    xmax=14,
    ymax=14,
    zmax=14,
    xtick={2,4,6,8,10,12},
    ytick={2,4,6,8,10,12},
    ztick={2,4,6,8,10,12},
    major tick length=0,
    %axis .style={opacity=.1}
    %%opacity=0.8,
    %%clip=false
]    
  	\node at (axis cs:0,24,0) {$\scalemath{1.75}{0}$};
	\node at (axis cs:2,22,0) {$\scalemath{1.75}{6}$};
	\node at (axis cs:4,20,0) {$\scalemath{1.75}{6}$};
	\node at (axis cs:6,18,0) {$\scalemath{1.75}{6}$};
	\node at (axis cs:8,16,0) {$\scalemath{1.75}{6}$};
	\node at (axis cs:10,14,0) {$\scalemath{1.75}{6}$};
	\node at (axis cs:12,12,0) {$\scalemath{1.75}{6}$};
	\node at (axis cs:14,10,0) {$\scalemath{1.75}{6}$};
	
	\node at (axis cs:0,22,2) {$\scalemath{1.75}{10}$};
	\node at (axis cs:2,20,2) {$\scalemath{1.75}{15}$};
	\node at (axis cs:4,18,2) {$\scalemath{1.75}{15}$};
	\node at (axis cs:6,16,2) {$\scalemath{1.75}{15}$};
	\node at (axis cs:8,14,2) {$\scalemath{1.75}{15}$};
	\node at (axis cs:10,12,2) {$\scalemath{1.75}{15}$};
	\node at (axis cs:12,10,2) {$\scalemath{1.75}{15}$};
	
	\node at (axis cs:0,20,4) {$\scalemath{1.75}{16}$};
	\node at (axis cs:2,18,4) {$\scalemath{1.75}{21}$};
	\node at (axis cs:4,16,4) {$\scalemath{1.75}{21}$};
	\node at (axis cs:6,14,4) {$\scalemath{1.75}{21}$};
	\node at (axis cs:8,12,4) {$\scalemath{1.75}{21}$};
	\node at (axis cs:10,10,4) {$\scalemath{1.75}{21}$};
	
	\node at (axis cs:0,18,6) {$\scalemath{1.75}{22}$};
	\node at (axis cs:2,16,6) {$\scalemath{1.75}{25}$};
	\node at (axis cs:4,14,6) {$\scalemath{1.75}{25}$};
	\node at (axis cs:6,12,6) {$\scalemath{1.75}{25}$};
	\node at (axis cs:8,10,6) {$\scalemath{1.75}{25}$};
	
	\node at (axis cs:0,16,8) {$\scalemath{1.75}{26}$};
	\node at (axis cs:2,14,8) {$\scalemath{1.75}{29}$};
	\node at (axis cs:4,12,8) {$\scalemath{1.75}{29}$};
	\node at (axis cs:6,10,8) {$\scalemath{1.75}{29}$};
	
	\node at (axis cs:0,14,10) {$\scalemath{1.75}{30}$};
	\node at (axis cs:2,12,10) {$\scalemath{1.75}{31}$};
	\node at (axis cs:4,10,10) {$\scalemath{1.75}{31}$};
	
	\node at (axis cs:0,12,12) {$\scalemath{1.75}{32}$};
	\node at (axis cs:2,10,12) {$\scalemath{1.75}{32}$};
	
	\node at (axis cs:0,10,14) {$\scalemath{1.75}{33}$};

  \end{ternaryaxis}
\end{tikzpicture}}
\scalemath{.4}{
\begin{tikzpicture}
  \begin{ternaryaxis}[rotate=-30,zlabel=$\scalemath{2}{\lambda}$, zlabel style={yshift=.5cm,xshift=1cm}, xlabel=$\scalemath{2}{\omega_4}$,xlabel style={yshift=-.5cm, xshift=0.5cm},
    clip=false,xticklabels={},yticklabels={},zticklabels={},xmin=0,
    ymin=0,
    zmin=0,
    xmax=14,
    ymax=14,
    zmax=14,
    yscale=-1,
    xtick={2,4,6,8,10,12},
    ytick={2,4,6,8,10,12},
    ztick={2,4,6,8,10,12},
    major tick length=0,
    %axis .style={opacity=.1}
    %%opacity=0.8,
    %%clip=false
]    
  	\node at (axis cs:0,24,0) {$\scalemath{1.75}{6}$};
	
	\node at (axis cs:0,22,2) {$\scalemath{1.75}{14}$};
	\node at (axis cs:2,22,0) {$\scalemath{1.75}{15}$};
	
	\node at (axis cs:0,20,4) {$\scalemath{1.75}{20}$};
	\node at (axis cs:2,20,2) {$\scalemath{1.75}{21}$};
	\node at (axis cs:4,20,0) {$\scalemath{1.75}{21}$};
	
	\node at (axis cs:0,18,6) {$\scalemath{1.75}{23}$};
	\node at (axis cs:2,18,4) {$\scalemath{1.75}{24}$};
	\node at (axis cs:4,18,2) {$\scalemath{1.75}{25}$};
	\node at (axis cs:6,18,0) {$\scalemath{1.75}{25}$};
	
	\node at (axis cs:0,16,8) {$\scalemath{1.75}{26}$};
	\node at (axis cs:2,16,6) {$\scalemath{1.75}{27}$};
	\node at (axis cs:4,16,4) {$\scalemath{1.75}{28}$};
	\node at (axis cs:6,16,2) {$\scalemath{1.75}{29}$};
	\node at (axis cs:8,16,0) {$\scalemath{1.75}{29}$};
	
	\node at (axis cs:0,14,10) {$\scalemath{1.75}{27}$};
	\node at (axis cs:2,14,8) {$\scalemath{1.75}{28}$};
	\node at (axis cs:4,14,6) {$\scalemath{1.75}{29}$};
	\node at (axis cs:6,14,4) {$\scalemath{1.75}{30}$};
	\node at (axis cs:8,14,2) {$\scalemath{1.75}{31}$};
	\node at (axis cs:10,14,0) {$\scalemath{1.75}{31}$};
	
	\node at (axis cs:0,12,12) {$\scalemath{1.75}{28}$};
	\node at (axis cs:2,12,10) {$\scalemath{1.75}{29}$};
	\node at (axis cs:4,12,8) {$\scalemath{1.75}{30}$};
	\node at (axis cs:6,12,6) {$\scalemath{1.75}{31}$};
	\node at (axis cs:8,12,4) {$\scalemath{1.75}{32}$};
	\node at (axis cs:10,12,2) {$\scalemath{1.75}{32}$};
	\node at (axis cs:12,12,0) {$\scalemath{1.75}{32}$};
	
	\node at (axis cs:0,10,14) {$\scalemath{1.75}{29}$};
	\node at (axis cs:2,10,12) {$\scalemath{1.75}{30}$};
	\node at (axis cs:4,10,10) {$\scalemath{1.75}{31}$};
	\node at (axis cs:6,10,8) {$\scalemath{1.75}{32}$};
	\node at (axis cs:8,10,6) {$\scalemath{1.75}{33}$};
	\node at (axis cs:10,10,4) {$\scalemath{1.75}{33}$};
	\node at (axis cs:12,10,2) {$\scalemath{1.75}{33}$};
	\node at (axis cs:14,10,0) {$\scalemath{1.75}{33}$};
	
	\addplot3[color=red,ultra thick] coordinates {(0,14,0)(2,12,0)(2,10,2)(8,4,2)(8,0,6)};

  \end{ternaryaxis}
\end{tikzpicture}}
\subcaption{bottom faces\label{bottom break paths}}
\endminipage
\caption{The top and bottom of a $4$-hive with break paths indicated. \label{fig:4hive example}}
\end{figure}

%%%%%%%%%%%%%%%%%%%%%%%%%%%%%%%%%%
\begin{figure}
\centering
\minipage[b]{0.5\textwidth}
\centering
\scalemath{.4}{
\begin{tikzpicture}
  \begin{ternaryaxis}[ylabel=$\scalemath{2}{\sigma_6}$,xlabel=$\scalemath{2}{\lambda}$,
    clip=false,xticklabels={},yticklabels={},zticklabels={},xmin=0,
    ymin=0,
    zmin=0,
    xmax=14,
    ymax=14,
    zmax=14,
    xtick={2,4,6,8,10,12},
    ytick={2,4,6,8,10,12},
    ztick={2,4,6,8,10,12},
    major tick length=0,
    %axis .style={opacity=.1}
    %%opacity=0.8,
    %%clip=false
]    
  	\node at (axis cs:0,24,0) {$\scalemath{1.75}{0}$};
	\node at (axis cs:2,22,0) {$\scalemath{1.75}{6}$};
	\node at (axis cs:4,20,0) {$\scalemath{1.75}{6}$};
	\node at (axis cs:6,18,0) {$\scalemath{1.75}{6}$};
	\node at (axis cs:8,16,0) {$\scalemath{1.75}{6}$};
	\node at (axis cs:10,14,0) {$\scalemath{1.75}{6}$};
	\node at (axis cs:12,12,0) {$\scalemath{1.75}{6}$};
	\node at (axis cs:14,10,0) {$\scalemath{1.75}{6}$};
	
	\node at (axis cs:0,22,2) {$\scalemath{1.75}{9}$};
	\node at (axis cs:2,20,2) {$\scalemath{1.75}{14}$};
	\node at (axis cs:4,18,2) {$\scalemath{1.75}{14}$};
	\node at (axis cs:6,16,2) {$\scalemath{1.75}{14}$};
	\node at (axis cs:8,14,2) {$\scalemath{1.75}{14}$};
	\node at (axis cs:10,12,2) {$\scalemath{1.75}{14}$};
	\node at (axis cs:12,10,2) {$\scalemath{1.75}{14}$};
	
	\node at (axis cs:0,20,4) {$\scalemath{1.75}{15}$};
	\node at (axis cs:2,18,4) {$\scalemath{1.75}{20}$};
	\node at (axis cs:4,16,4) {$\scalemath{1.75}{20}$};
	\node at (axis cs:6,14,4) {$\scalemath{1.75}{20}$};
	\node at (axis cs:8,12,4) {$\scalemath{1.75}{20}$};
	\node at (axis cs:10,10,4) {$\scalemath{1.75}{20}$};
	
	\node at (axis cs:0,18,6) {$\scalemath{1.75}{21}$};
	\node at (axis cs:2,16,6) {$\scalemath{1.75}{23}$};
	\node at (axis cs:4,14,6) {$\scalemath{1.75}{23}$};
	\node at (axis cs:6,12,6) {$\scalemath{1.75}{23}$};
	\node at (axis cs:8,10,6) {$\scalemath{1.75}{23}$};
	
	\node at (axis cs:0,16,8) {$\scalemath{1.75}{24}$};
	\node at (axis cs:2,14,8) {$\scalemath{1.75}{26}$};
	\node at (axis cs:4,12,8) {$\scalemath{1.75}{26}$};
	\node at (axis cs:6,10,8) {$\scalemath{1.75}{26}$};
	
	\node at (axis cs:0,14,10) {$\scalemath{1.75}{27}$};
	\node at (axis cs:2,12,10) {$\scalemath{1.75}{27}$};
	\node at (axis cs:4,10,10) {$\scalemath{1.75}{27}$};
	
	\node at (axis cs:0,12,12) {$\scalemath{1.75}{28}$};
	\node at (axis cs:2,10,12) {$\scalemath{1.75}{28}$};
	
	\node at (axis cs:0,10,14) {$\scalemath{1.75}{29}$};

  \end{ternaryaxis}
\end{tikzpicture}}

\vspace{-3.1mm}

\scalemath{.4}{
\begin{tikzpicture}
  \begin{ternaryaxis}[ylabel=$\scalemath{2}{\nu}$,xlabel=$\scalemath{2}{\omega_4}$,xlabel style={xshift=0.5cm},ylabel style={xshift=-0.5cm},
    clip=false,xticklabels={},yticklabels={},zticklabels={},xmin=0,
    ymin=0,
    zmin=0,
    xmax=14,
    ymax=14,
    zmax=14,
    yscale=-1,
    xtick={2,4,6,8,10,12},
    ytick={2,4,6,8,10,12},
    ztick={2,4,6,8,10,12},
    major tick length=0,
    %axis .style={opacity=.1}
    %%opacity=0.8,
    %%clip=false
]    
  	\node at (axis cs:0,24,0) {$\scalemath{1.75}{0}$};
	
	\node at (axis cs:0,22,2) {$\scalemath{1.75}{9}$};
	\node at (axis cs:2,22,0) {$\scalemath{1.75}{10}$};
	
	\node at (axis cs:0,20,4) {$\scalemath{1.75}{15}$};
	\node at (axis cs:2,20,2) {$\scalemath{1.75}{16}$};
	\node at (axis cs:4,20,0) {$\scalemath{1.75}{16}$};
	
	\node at (axis cs:0,18,6) {$\scalemath{1.75}{21}$};
	\node at (axis cs:2,18,4) {$\scalemath{1.75}{22}$};
	\node at (axis cs:4,18,2) {$\scalemath{1.75}{22}$};
	\node at (axis cs:6,18,0) {$\scalemath{1.75}{22}$};
	
	\node at (axis cs:0,16,8) {$\scalemath{1.75}{24}$};
	\node at (axis cs:2,16,6) {$\scalemath{1.75}{25}$};
	\node at (axis cs:4,16,4) {$\scalemath{1.75}{26}$};
	\node at (axis cs:6,16,2) {$\scalemath{1.75}{26}$};
	\node at (axis cs:8,16,0) {$\scalemath{1.75}{26}$};
	
	\node at (axis cs:0,14,10) {$\scalemath{1.75}{27}$};
	\node at (axis cs:2,14,8) {$\scalemath{1.75}{28}$};
	\node at (axis cs:4,14,6) {$\scalemath{1.75}{29}$};
	\node at (axis cs:6,14,4) {$\scalemath{1.75}{30}$};
	\node at (axis cs:8,14,2) {$\scalemath{1.75}{30}$};
	\node at (axis cs:10,14,0) {$\scalemath{1.75}{30}$};
	
	\node at (axis cs:0,12,12) {$\scalemath{1.75}{28}$};
	\node at (axis cs:2,12,10) {$\scalemath{1.75}{29}$};
	\node at (axis cs:4,12,8) {$\scalemath{1.75}{30}$};
	\node at (axis cs:6,12,6) {$\scalemath{1.75}{31}$};
	\node at (axis cs:8,12,4) {$\scalemath{1.75}{32}$};
	\node at (axis cs:10,12,2) {$\scalemath{1.75}{32}$};
	\node at (axis cs:12,12,0) {$\scalemath{1.75}{32}$};
	
	\node at (axis cs:0,10,14) {$\scalemath{1.75}{29}$};
	\node at (axis cs:2,10,12) {$\scalemath{1.75}{30}$};
	\node at (axis cs:4,10,10) {$\scalemath{1.75}{31}$};
	\node at (axis cs:6,10,8) {$\scalemath{1.75}{32}$};
	\node at (axis cs:8,10,6) {$\scalemath{1.75}{33}$};
	\node at (axis cs:10,10,4) {$\scalemath{1.75}{33}$};
	\node at (axis cs:12,10,2) {$\scalemath{1.75}{33}$};
	\node at (axis cs:14,10,0) {$\scalemath{1.75}{33}$};
	
	\addplot3[color=red,ultra thick] coordinates {(0,14,0)(2,12,0)(2,8,4)(8,2,4)(8,0,6)};

  \end{ternaryaxis}
\end{tikzpicture}}
\endminipage
\minipage[b]{0.5\textwidth}
\centering
\scalemath{.4}{
\begin{tikzpicture}
  \begin{ternaryaxis}[ylabel=$\scalemath{2}{\sigma_6}$,xlabel=$\scalemath{2}{\lambda}$,
    clip=false,xticklabels={},yticklabels={},zticklabels={},xmin=0,
    z axis line style= { draw opacity=.1 },
    ymin=0,
    zmin=0,
    xmax=14,
    ymax=14,
    zmax=14,
    xtick={2,4,6,8,10,12},
    ytick={2,4,6,8,10,12},
    ztick={2,4,6,8,10,12},
    major tick length=0,
    %axis .style={opacity=.1}
    %%opacity=0.8,
    %%clip=false
]    
  	\node at (axis cs:0,24,0) {$\scalemath{1.75}{0}$};
	\node at (axis cs:2,22,0) {$\scalemath{1.75}{6}$};
	\node at (axis cs:4,20,0) {$\scalemath{1.75}{6}$};
	\node at (axis cs:6,18,0) {$\scalemath{1.75}{6}$};
	\node at (axis cs:8,16,0) {$\scalemath{1.75}{6}$};
	\node at (axis cs:10,14,0) {$\scalemath{1.75}{6}$};
	\node at (axis cs:12,12,0) {$\scalemath{1.75}{6}$};
	\node at (axis cs:14,10,0) {$\scalemath{1.75}{6}$};
	
	\node at (axis cs:0,22,2) {$\scalemath{1.75}{15}$};
	\node at (axis cs:2,20,2) {$\scalemath{1.75}{14}$};
	\node at (axis cs:4,18,2) {$\scalemath{1.75}{14}$};
	\node at (axis cs:6,16,2) {$\scalemath{1.75}{14}$};
	\node at (axis cs:8,14,2) {$\scalemath{1.75}{14}$};
	\node at (axis cs:10,12,2) {$\scalemath{1.75}{14}$};
	\node at (axis cs:12,10,2) {$\scalemath{1.75}{14}$};
	
	\node at (axis cs:0,20,4) {$\scalemath{1.75}{21}$};
	\node at (axis cs:2,18,4) {$\scalemath{1.75}{20}$};
	\node at (axis cs:4,16,4) {$\scalemath{1.75}{20}$};
	\node at (axis cs:6,14,4) {$\scalemath{1.75}{20}$};
	\node at (axis cs:8,12,4) {$\scalemath{1.75}{20}$};
	\node at (axis cs:10,10,4) {$\scalemath{1.75}{20}$};
	
	\node at (axis cs:0,18,6) {$\scalemath{1.75}{24}$};
	\node at (axis cs:2,16,6) {$\scalemath{1.75}{23}$};
	\node at (axis cs:4,14,6) {$\scalemath{1.75}{23}$};
	\node at (axis cs:6,12,6) {$\scalemath{1.75}{23}$};
	\node at (axis cs:8,10,6) {$\scalemath{1.75}{23}$};
	
	\node at (axis cs:0,16,8) {$\scalemath{1.75}{27}$};
	\node at (axis cs:2,14,8) {$\scalemath{1.75}{26}$};
	\node at (axis cs:4,12,8) {$\scalemath{1.75}{26}$};
	\node at (axis cs:6,10,8) {$\scalemath{1.75}{26}$};
	
	\node at (axis cs:0,14,10) {$\scalemath{1.75}{28}$};
	\node at (axis cs:2,12,10) {$\scalemath{1.75}{27}$};
	\node at (axis cs:4,10,10) {$\scalemath{1.75}{27}$};
	
	\node at (axis cs:0,12,12) {$\scalemath{1.75}{29}$};
	\node at (axis cs:2,10,12) {$\scalemath{1.75}{28}$};
	
	\node at (axis cs:0,10,14) {$\scalemath{1.75}{29}$};
	
	\fill[gray, opacity=.5] (axis cs:0,22,2)--(axis cs:0,24,0)--(axis cs:2,22,0)--cycle;
         \fill[gray, opacity=.5] (axis cs:0,0,14)--(axis cs:2,0,12)--(axis cs:0,2,12)--cycle;
         
         \addplot3[color=black] coordinates {(2,12,0)(2,0,12)};
	\addplot3[color=black, thick] coordinates {(0,12,2)(2,12,0)};
	\addplot3[color=black, thick] coordinates {(0,2,12)(2,0,12)};
	
	\addplot3[color=red,ultra thick] coordinates {(0,12,2)(2,12,0)};

  \end{ternaryaxis}
\end{tikzpicture}}

\vspace{-3.1mm}

\scalemath{.4}{
\begin{tikzpicture}
  \begin{ternaryaxis}[
    clip=false,xticklabels={},yticklabels={},zticklabels={},xmin=0,
    z axis line style= { draw opacity=.1 },
    ymin=0,
    zmin=0,
    xmax=14,
    ymax=14,
    zmax=14,
    yscale=-1,
    xtick={2,4,6,8,10,12},
    ytick={2,4,6,8,10,12},
    ztick={2,4,6,8,10,12},
    major tick length=0,
    %axis .style={opacity=.1}
    %%opacity=0.8,
    %%clip=false
]    
	\fill[gray,opacity=.5] (axis cs:0,14,0)--(axis cs:14,0,0)--(axis cs:10,2,2)--(axis cs:0,12,2)--cycle;
	\fill[gray,opacity=.5] (axis cs:0,0,14)--(axis cs:14,0,0)--(axis cs:10,2,2)--(axis cs:0,2,12)--cycle;
	
  	\node[color=gray] at (axis cs:0,24,0) {$\scalemath{1.75}{0}$};
	
	\node at (axis cs:0,22,2) {$\scalemath{1.75}{15}$};
	\node[color=gray] at (axis cs:2,22,0) {$\scalemath{1.75}{10}$};
	
	\node at (axis cs:0,20,4) {$\scalemath{1.75}{21}$};
	\node at (axis cs:2,20,2) {$\scalemath{1.75}{21}$};
	\node[color=gray] at (axis cs:4,20,0) {$\scalemath{1.75}{16}$};
	
	\node at (axis cs:0,18,6) {$\scalemath{1.75}{24}$};
	\node at (axis cs:2,18,4) {$\scalemath{1.75}{25}$};
	\node at (axis cs:4,18,2) {$\scalemath{1.75}{25}$};
	\node[color=gray] at (axis cs:6,18,0) {$\scalemath{1.75}{22}$};
	
	\node at (axis cs:0,16,8) {$\scalemath{1.75}{27}$};
	\node at (axis cs:2,16,6) {$\scalemath{1.75}{28}$};
	\node at (axis cs:4,16,4) {$\scalemath{1.75}{29}$};
	\node at (axis cs:6,16,2) {$\scalemath{1.75}{29}$};
	\node[color=gray] at (axis cs:8,16,0) {$\scalemath{1.75}{26}$};
	
	\node at (axis cs:0,14,10) {$\scalemath{1.75}{28}$};
	\node at (axis cs:2,14,8) {$\scalemath{1.75}{29}$};
	\node at (axis cs:4,14,6) {$\scalemath{1.75}{30}$};
	\node at (axis cs:6,14,4) {$\scalemath{1.75}{31}$};
	\node at (axis cs:8,14,2) {$\scalemath{1.75}{31}$};
	\node[color=gray] at (axis cs:10,14,0) {$\scalemath{1.75}{30}$};
	
	\node at (axis cs:0,12,12) {$\scalemath{1.75}{29}$};
	\node at (axis cs:2,12,10) {$\scalemath{1.75}{30}$};
	\node at (axis cs:4,12,8) {$\scalemath{1.75}{31}$};
	\node at (axis cs:6,12,6) {$\scalemath{1.75}{32}$};
	\node at (axis cs:8,12,4) {$\scalemath{1.75}{32}$};
	\node at (axis cs:10,12,2) {$\scalemath{1.75}{32}$};
	\node[color=gray] at (axis cs:12,12,0) {$\scalemath{1.75}{32}$};
	
	\node[color=gray] at (axis cs:0,10,14) {$\scalemath{1.75}{29}$};
	\node[color=gray] at (axis cs:2,10,12) {$\scalemath{1.75}{30}$};
	\node[color=gray] at (axis cs:4,10,10) {$\scalemath{1.75}{31}$};
	\node[color=gray] at (axis cs:6,10,8) {$\scalemath{1.75}{32}$};
	\node[color=gray] at (axis cs:8,10,6) {$\scalemath{1.75}{33}$};
	\node[color=gray] at (axis cs:10,10,4) {$\scalemath{1.75}{33}$};
	\node[color=gray] at (axis cs:12,10,2) {$\scalemath{1.75}{33}$};
	\node[color=gray] at (axis cs:14,10,0) {$\scalemath{1.75}{33}$};
	
	\addplot3[color=red,ultra thick] coordinates {(0,12,2)(0,10,4)(6,4,4)(6,2,6)};
	
	\addplot3[color=black, thick] coordinates {(0,2,12)(10,2,2)};
	\addplot3[color=black, thick] coordinates {(0,12,2)(10,2,2)};

  \end{ternaryaxis}
\end{tikzpicture}}
\endminipage
\caption{The revealed hive values and break path after excavation.}
\end{figure}

We close this section by defining $SL_m$ hives (see \cite{GS, Le1} for this definition). Recall that the $SL_m$ weight lattice is $\mathbb Z^{m-1}\cong\mathbb Z^m/(1,\ldots,1)$. Given an $SL_m$ dominant weight $\lambda$ represented by $(\lambda_1,\ldots,\lambda_m)$ define $\hat \lambda=(\lambda_1,\ldots,\lambda_m)-\frac{\sum \lambda_i}{m}(1,\ldots,1)$. Then $\hat \lambda\in \frac{1}{m}\mathbb Z^m$ doesn't depend on the representative for $\lambda$ and $\lvert \hat\lambda\rvert=0$.

An $SL_m$ $n$-hive is an assignment $f_{i_1,\ldots,i_n}\in \frac{1}{m}\mathbb Z$ to each point of $\Delta^n_m$ such that the rhombus inequalities are satisfied on every two-dimensional face and the octahedron recurrence holds. An $SL_m$ $n$-hive has type $(\lambda^1,\ldots,\lambda^n)$ if for all $i$ the differences of the hive values along the edge from $me_i$ to $me_{i+1}$ give $\hat \lambda^i$. We have the analogue of Theorem \ref{thm:hives} in the $SL_m$ setting.

\section{Cyclic Sieving on Rectangular $\delta$-semistandard Tableaux}\label{sec:CSP}
\subsection{Rotation of the Satake Basis}
In this section, we recall the geometric Satake equivalence, describe the Satake basis for $\Inv(\vec\lambda)$, show that it is indexed by $\delta$-semistandard tableaux, and prove that rotation of tensor factors coincides (up to sign) with $\delta$-promotion of the tableaux. We accomplish the last goal by embedding $\Inv(\vec\lambda)$ in $\Inv(\vec\mu)$ where $\vec\mu$ is a minuscule sequence, and applying the corresponding result for minuscule sequences from \cite{FK}. 

\subsubsection{The Geometric Satake Equivalence}
The geometric Satake equivalence of Lusztig \cite{Lus}, Ginzburg \cite{Gin}, Beilinson--Drinfeld \cite{BD}, Mirkovi\'c--Vilonen \cite{MV} is an equivalence between the tensor category of representations of $G$ and the tensor category of perverse sheaves on the affine Grassmannian $Gr_{G^L}$ for the Langlands dual group $G^L$. In the present case, $G=SL_m$ and $G^L=PGL_m$, so let $Gr$ denote $Gr_{PGL_m}$. Let $\mathcal K=\mathbb C((t))$ and $\mathcal O=\mathbb C[[t]]$. As a set the affine Grassmannian is the quotient $Gr=PGL_m(\mathcal K)/PGL_m(\mathcal O)$, which can be thought of as the set of $\mathcal O$-lattices up to homothety. The affine Grassmannian is an ind-variety, an inductive limit of finite dimensional varieties. 

Let $\lambda$ be a dominant weight of $SL_m$ (equivalently a dominant coweight of $PGL_m$) and let $t^\lambda\in Gr$ be the class of the matrix with entries $t^{\lambda_i}$ along the diagonal. Then $Gr$ is stratified by the $PGL_m(\mathcal O)$-orbits, which are indexed by dominant $SL_m$-weights and are equal to $Gr(\lambda)=PGL_m(\mathcal O)\cdot t^{\lambda}$. The closure is $\overline{Gr}(\vec\lambda)=\cup_{\mu\leq \lambda} Gr(\mu)$ where weights are ordered by the usual dominance order, that is $\mu\leq \lambda$ if $\lambda-\mu$ is a nonnegative integer sum of simple roots. This stratification defines a dominant-weight-valued distance function between points of $Gr$. Given two points $p,q\in Gr$ there is a $g$ in $PGL_m(\mathcal K)$ such that $(g\cdot p,g\cdot q)=(t^0,t^\lambda)$ for a unique dominant weight $\lambda$. Then define $d(p,q)=\lambda$. Put differently, the $H$-orbits on $K/H$ are in bijection with $K$-orbits on $K/H\times K/H$ for any group $K$ and subgroup $H$. 

Let $\vec\lambda$ be an arbitrary sequence of dominant weights. Under the geometric Satake equivalence, the invariant space $\Inv(\vec{\lambda})$ can be interpreted as follows. Consider the twisted product space 
\begin{align*}
\widetilde{Gr}(\vec\lambda):=&\overline{Gr(\lambda^1)}\widetilde\times\overline{Gr(\lambda^2)}\widetilde\times\cdots\widetilde\times \overline{Gr(\lambda^n)}\\
=&\left\{([g_1],\ldots,[g_n])\in Gr^n\mid d(t^0,[g_1])\leq \lambda^1, d([g_i],[g_{i+1}])\leq\lambda^{i+1}\right\}.
\end{align*}
We will also need the open stratum $Gr(\vec\lambda)$ where the inequalities are replaced by equalities. The convolution morphism $l:\widetilde{Gr}(\vec\lambda)\rightarrow Gr$ sends a point to the last factor in the twisted product. The fiber over $t^0$ is called the Satake fiber, which we denote by
\begin{align*}
P(\vec{\lambda})=\left\{([g_0]=t^0=[g_n],[g_1],\ldots,[g_{n-1}]) \in Gr^n\mid d([g_{i-1}],[g_{i}])\leq\lambda^i\right\}.
\end{align*}
Then the invariant space $\Inv(\vec{\lambda})$ is identified with the top homology $H_{\text{top}}(P(\vec\lambda))$ under the geometric Satake equivalence. Note that $\widetilde{Gr}(\vec\lambda)$ has complex dimension $\langle \lvert \vec\lambda \rvert,2\rho\rangle$ and $P(\vec\lambda)$ has complex dimension $\langle \lvert \vec\lambda \rvert,\rho\rangle$. Proofs of the second part of the following theorem can be found in \cite[Prop.~3.1]{Hai1} and \cite[Lem.~4.6]{HS}.

\begin{Thm}{\cite{Lus,Gin,BD,MV}}
The category of $G^L(\mathcal O)$-equivariant perverse sheaves on $Gr$ is equivalent to the tensor category of representations of $G$. Under this equivalence the invariant space is isomorphic the top Borel--Moore homology of the Satake fiber,
\begin{align*}
\Inv(\vec\lambda)\cong H_{top}(P(\vec\lambda)).
\end{align*}
Hence, the top-dimensional components of $P(\vec\lambda)$ give a basis called the Satake basis.
\end{Thm}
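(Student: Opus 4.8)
The plan is to read this as the geometric Satake equivalence, whose first assertion is the deep input of \cite{Lus,Gin,BD,MV} and whose second assertion is a comparatively formal consequence worked out in \cite{Hai1,HS}. For the categorical equivalence I would follow the Tannakian strategy. First equip $\mathrm{Perv}_{G^L(\mathcal O)}(Gr)$ with the convolution monoidal structure: given two equivariant perverse sheaves, form their twisted external product on the convolution diagram and push forward along the multiplication map. The crucial point is that convolution preserves perversity (this is the semismallness input discussed below) and is commutative and associative up to coherent isomorphism. Next, verify that total hypercohomology $\mathbb H^*(Gr,-)=\bigoplus_i H^i(Gr,-)$ is an exact, faithful, monoidal fibre functor to graded vector spaces. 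Tannakian reconstruction then yields a pro-algebraic group whose representation category is $\mathrm{Perv}_{G^L(\mathcal O)}(Gr)$, and a root/weight computation identifies it with the Langlands dual $G$ of $G^L$; here $G^L=PGL_m$ and $G=SL_m$. I regard this first part as cited rather than reproved.

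For the homology identification I would argue as follows. Under the equivalence the trivial representation $V_0$ corresponds to the skyscraper sheaf $\mathbb C_{t^0}$ supported at the base point $t^0=Gr(0)$, while each $V_{\lambda^i}$ corresponds to the intersection cohomology sheaf $\mathrm{IC}_{\lambda^i}$ of the orbit closure $\overline{Gr(\lambda^i)}$. Tensor product goes to convolution, which by construction is computed by pushing the twisted external product of the $\mathrm{IC}_{\lambda^i}$, i.e.\ the IC sheaf of $\widetilde{Gr}(\vec\lambda)$, forward along the convolution morphism $l\colon\widetilde{Gr}(\vec\lambda)\to Gr$. Hence
\[
\Inv(\vec\lambda)=\Hom_G\bigl(V_0,\,V_{\lambda^1}\otimes\cdots\otimes V_{\lambda^n}\bigr)\cong\Hom\bigl(\mathbb C_{t^0},\,\mathrm{IC}_{\lambda^1}*\cdots*\mathrm{IC}_{\lambda^n}\bigr),
\]
and since both arguments are perverse this $\Hom$ equals $H^0$ of the costalk $i_{t^0}^{!}\,l_{*}(\cdots)$ at $t^0$. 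Proper base change along $l$ then rewrites this costalk as a cohomology group of the fibre $l^{-1}(t^0)=P(\vec\lambda)$, the Satake fibre.

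It remains to show this group is the top Borel--Moore homology and that the top-dimensional components supply a basis. The key technical input is semismallness of $l$: the IC sheaves of the orbit closures are pure and even, and the standard dimension estimate on the strata of $\widetilde{Gr}(\vec\lambda)$ gives $\dim l^{-1}(t^0)\le\tfrac12\dim\widetilde{Gr}(\vec\lambda)=\langle\lvert\vec\lambda\rvert,\rho\rangle$, matching the stated dimension of $P(\vec\lambda)$. Semismallness forces $l_{*}(\mathrm{IC}_{\widetilde{Gr}(\vec\lambda)})$ to be perverse and its costalk at $t^0$ to be concentrated in a single cohomological degree, collapsing the $\Hom$ above onto $H^{BM}_{\mathrm{top}}(P(\vec\lambda))$. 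Because the top Borel--Moore homology of a complex variety has a basis indexed by its top-dimensional irreducible components, these components constitute the Satake basis. The precise degree bookkeeping and the semismallness estimate are carried out in \cite[Prop.~3.1]{Hai1} and \cite[Lem.~4.6]{HS}, which I would cite rather than reprove in full.

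The main obstacle is exactly this semismallness/purity step. Once the categorical equivalence is granted, everything is formal manipulation of the fibre functor and adjunction; what genuinely has to be checked is the concentration of the costalk in top degree, equivalently the equidimensionality of $P(\vec\lambda)$ together with the perversity of the convolution. I would therefore isolate the orbit-dimension estimate on $Gr$ as the heart of the argument, since it is what guarantees that $\Inv(\vec\lambda)$ is literally the top homology $H^{BM}_{\mathrm{top}}(P(\vec\lambda))$ rather than a sum spread across several degrees.
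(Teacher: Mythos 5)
Your proposal is correct and takes essentially the same route as the paper, which states this result as cited background --- \cite{Lus,Gin,BD,MV} for the categorical equivalence and \cite[Prop.~3.1]{Hai1}, \cite[Lem.~4.6]{HS} for the identification of $\Inv(\vec\lambda)$ with top Borel--Moore homology --- precisely the inputs your sketch isolates and attributes. The only slip is harmless: semismallness does not force the costalk $i_{t^0}^{!}\,l_{*}(\cdots)$ into a single cohomological degree (the summands $\mathrm{IC}_\mu$ with $\mu\neq 0$ in the decomposition contribute in various positive degrees), but the adjunction only ever extracts $H^{0}$ of that costalk, which is exactly what the cited results identify with $H^{BM}_{\mathrm{top}}(P(\vec\lambda))$.
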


Let $R:\Inv(\vec\lambda)\rightarrow \Inv(R(\vec\lambda))$ denote the linear map on invariant spaces given by rotation of tensor factors. In the next subsection we show that this agrees (up to a sign) with geometric rotation of components, which permutes the components.

\subsubsection{Indexing Components and Rotation}
When the weights $\vec\lambda$ are all minuscule, Haines \cite{Hai1} showed that the polygon space is equidimensional, so each irreducible component $Z$ gives a basis vector $[Z]$ in $H_{\text{top}}(P(\vec\lambda))$. In this case, Fontaine, Kamnitzer, Kuperberg \cite{FKK} showed that the components are indexed by minuscule paths of type $\vec\lambda$, which are equivalent to rectangular $({\bf v},\ldots,{\bf v})$-semistandard tableaux of content specified by $\vec\lambda$. Let $Z_T$ denote the component labelled by such a tableau $T$. Fontaine and Kamnitzer \cite{FK} defined the geometric rotation operator 
\begin{align*}
R_{\text{geom}}:H_{\text{top}}(P(\vec\lambda))\rightarrow H_{\text{top}}(P(R(\vec\lambda)))
\end{align*}
that maps $[Z]$ to the basis vector $R_{geom}([Z])=[Z']$ where
\begin{align*}
Z'=\left\{\left(t^0,[g_1^{-1}g_2],\ldots,[g_1^{-1}g_{n-1}],[g_1^{-1}]\right)\mid (t^0,[g_1],\ldots,[g_{n-1}])\in Z\right\}.
\end{align*}
They proved the following theorem.
\begin{Thm}[{\cite[Theorem 3.4]{FK}}]\label{thm:FK}
Let $\vec\lambda$ be a sequence of minuscule weights. 
\begin{enumerate}
\item
Geometric rotation maps a component labelled by tableau $T$ to the component labelled by the promotion tableau $\partial(T)$, i.e. $R_{geom}([Z_T])=[Z_{\partial(T)}]$. 
\item
The following diagram commutes up to a factor of $(-1)^{\langle \lambda^1,2\rho\rangle}$.
$$
\begin{tikzcd}
\Inv(\vec{\lambda}) \arrow[r, "Satake"] \arrow[d, "R"] & H_{\text{top}}(P(\vec\lambda)) \arrow[d, "R_{geom}"] \\
\Inv(R(\vec{\lambda})) \arrow[r, "Satake"] & H_{\text{top}}(P(R(\vec\lambda)))
\end{tikzcd}
$$
\end{enumerate}
\end{Thm}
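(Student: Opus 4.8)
The plan is to establish part (1) first---the combinatorial-geometric identification of $R_{\text{geom}}$ with promotion---and then deduce part (2) by comparing $R$ with $R_{\text{geom}}$ through the Satake isomorphism and isolating the resulting scalar. For part (1) I would begin from the explicit parametrization of the components $Z_T$ of \cite{FKK}: since each $\lambda^i$ is minuscule the orbit $Gr(\lambda^i)$ is closed, so every relative position occurring in $P(\vec\lambda)$ is an \emph{equality} $d([g_{i-1}],[g_i])=\lambda^i$, and Haines' equidimensionality \cite{Hai1} lets me describe a dense cell of each component by a chain of lattices whose successive relative positions, read as a minuscule Littelmann path, recover the column-strict tableau $T$. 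The first routine task is then to check that geometric rotation, the translate-and-relabel map $([g_1],\ldots,[g_{n-1}])\mapsto([g_1^{-1}g_2],\ldots,[g_1^{-1}g_{n-1}],[g_1^{-1}])$, is an isomorphism $P(\vec\lambda)\xrightarrow{\sim}P(R(\vec\lambda))$ of ind-varieties carrying top-dimensional components to top-dimensional components; this is immediate, since left translation by $g_1^{-1}$ is an automorphism of $Gr$ and the distance function is $PGL_m(\mathcal K)$-invariant, so the edge types are simply cyclically relabelled to match $R(\vec\lambda)$.

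The heart of part (1) is to identify the tableau indexing $R_{\text{geom}}([Z_T])$ as $\partial(T)$. Here I would exploit the decomposition of promotion into local moves from Corollary~\ref{cor:BK decomp}, namely $\partial=t_{n-1}\cdots t_1$, and match each $t_i$ with a local geometric operation on the polygon. Concretely, the crystal structure carried by the components (via MV cycles) makes $\{Z_T\}$ a model for the invariants in $B(\lambda^1)\otimes\cdots\otimes B(\lambda^n)$, and the crystal commutor swapping adjacent factors is known by \cite{Len,vLee} (as recalled in Remark~\ref{rem:Westbury}) to act as the transpose Bender--Knuth involution on column-strict tableaux. I would then show that $R_{\text{geom}}$ is realized by cycling the first factor past all the others via the composite of the adjacent-factor commutors, and that this composite encodes exactly $t_{n-1}\cdots t_1$, whence $R_{\text{geom}}([Z_T])=[Z_{\partial(T)}]$. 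The main obstacle is precisely this bridge: proving that the geometric translation $g_1^{-1}\cdot$ decomposes into the successive commutor moves, rather than merely agreeing with promotion at the level of weights. I expect to handle it by degenerating to the torus-fixed strata and tracking relative positions one edge at a time, reducing to the single-commutor statement where \cite{Len,vLee} apply directly.

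For part (2), under geometric Satake we have $\Inv(\vec\lambda)\cong H_{\text{top}}(P(\vec\lambda))$ with Satake basis $\{[Z_T]\}$, so part (1) already shows that $R_{\text{geom}}$ permutes this basis exactly as $\partial$ permutes tableaux. It remains to compare $R_{\text{geom}}$ with the algebraic rotation $R$ of tensor factors and to pin down the scalar. My plan is to note that both maps intertwine the $SL_m$-action and implement the same cyclic permutation of factors, so by a Schur-type rigidity they agree up to a scalar; the scalar is a global sign coming from the commutativity constraint of the symmetric monoidal fiber functor. The delicate second obstacle is that the Satake fiber functor is symmetric monoidal only after twisting the naive commutativity constraint by the sign rule governed by the principal grading, so moving $V_{\lambda^1}$ from the front of the tensor product to the back contributes the Koszul-type sign $(-1)^{\langle\lambda^1,2\rho\rangle}$. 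I would make this precise by writing the cyclic rotation as a single braiding of $V_{\lambda^1}$ past $V_{\lambda^2}\otimes\cdots\otimes V_{\lambda^n}$ and evaluating the Satake commutativity constraint on the relevant top-homology classes, which produces exactly the claimed factor and completes the theorem.
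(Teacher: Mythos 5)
A preliminary remark: Theorem \ref{thm:FK} is imported from \cite{FK} and is not proved in this paper, so your proposal must be measured against Fontaine--Kamnitzer's argument and against the proofs the paper does give of its sym-alt generalizations, Theorems \ref{thm:geomrotsymalt} and \ref{thm:commutes}, which follow the same template.

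For part (1) there is a genuine gap, and it sits exactly where you locate your ``main obstacle.'' The standard argument (in \cite{FK}, and in Theorem \ref{thm:geomrotsymalt} here) is short: since the distance function is $PGL_m(\mathcal K)$-invariant, the rotated component has generic distances $d([g_1],[g_i])$, and since in the minuscule case a top-dimensional component is determined by its generic distances, one only has to show that the tableau formed by $d([g_1],[g_i])$ is $\partial(T)$ when the tableau formed by $d(t^0,[g_i])$ is $T$ --- a hive/octahedron-recurrence computation (Proposition \ref{prop:main} here via Goncharov--Shen). Your alternative route --- decompose $\partial=t_{n-1}\cdots t_1$ by Corollary \ref{cor:BK decomp}, identify each $t_i$ with a crystal commutor move via \cite{Len,vLee}, and then show $R_{\text{geom}}$ factors into the composite of adjacent commutors --- is precisely the Westbury-style path that Remark \ref{rem:Westbury} flags as requiring ``additional discussion.'' The bridging claim, that left translation by $g_1^{-1}$ decomposes geometrically into successive commutor moves, is a theorem in its own right (it amounts to relating the Satake commutativity constraint to the Henriques--Kamnitzer commutor \cite{HK}), and your proposed method of ``degenerating to torus-fixed strata and tracking relative positions one edge at a time'' cannot establish it: relative positions record only weight-valued data, and matching them edge by edge against the BK moves is exactly the hive computation you are trying to bypass, not a reduction of it.

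For part (2) there is a second, sharper gap: the ``Schur-type rigidity'' step fails. The spaces $\Inv(\vec\lambda)$ and $\Inv(R(\vec\lambda))$ carry trivial $SL_m$-actions, so \emph{every} linear map between them is equivariant, and equivariance forces nothing; moreover you do not know a priori that $R$ carries Satake basis vectors to scalar multiples of Satake basis vectors --- that is the content being proved, not an input. Two maps implementing ``the same permutation up to unknown diagonal scalars'' need not be proportional, so no rigidity argument can reduce the problem to pinning down one global sign. The actual proof must show that $R$ on invariants, viewed as induced by the cyclic-shift isomorphism of tensor functors, corresponds under geometric Satake (with its sign-twisted commutativity constraint) to $R_{\text{geom}}$ on the fiber; this is where \cite{FK} uses the fusion/Beilinson--Drinfeld interpretation of convolution, and where the present paper's Theorem \ref{thm:commutes} leans on \cite[Lem.~4.2]{Fon}. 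You have correctly identified the \emph{source} of the scalar --- the parity twist by $\dim Gr(\lambda^1)=\langle\lambda^1,2\rho\rangle$ in the Satake commutativity constraint --- but the statement that geometric rotation of $P(\vec\lambda)$ realizes that constraint on top homology is asserted in your last sentence rather than proved, and it is the heart of the theorem.
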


Our goal is to prove the analogous theorem when $\vec\lambda$ is a sym-alt sequence. To do so we will embed $\Inv(\vec\lambda)$ into a larger space of invariants $\Inv(\vec\mu)$ where $\mu$ is a minuscule sequence, and apply the previous theorem.

Let us first consider the indexing of components. As in the minuscule case, the top-dimensional components of $P(\vec\lambda)$ are labelled by $\delta$-semistandard tableaux with $(\delta,\gamma)$ corresponding to $\vec\lambda$. To see this, recall a conjecture of Kamnitzer \cite{Kam} that was proved by Goncharov--Shen \cite{GS}, building on the work of Fock--Goncharov \cite{FG}. To state it we recall the Fock--Goncharov functions, although we will not use this definition explicitly (see \cite{Le2} for the following definition). 
\begin{Def}
Let $\vec i=(i_1,\ldots,i_n)$ be a sequence of nonnegative integers such that $\sum i_j=m$. Let $p=([g_0]=t^0,[g_1],\ldots,[g_{n-1}])$ be a point in $P(\vec\lambda)$ and let $L_j$ be a lattice representative for $[g_j]$. Let $v_{j1},\ldots,v_{ji_j}$ be elements of the lattice $L_j$ and consider the values $-val(det(v_{11},\ldots,v_{1i_1},\ldots,v_{n1},\ldots,v_{ni_n}))$ where $val$ returns the smallest power of $t$ in a Laurent series. Then define $f_{\vec i}(p)$ to be the maximum of these values as the $v_{j1},\ldots,v_{ji_j}$ range over elements of the lattice $L_j$. This is not quite well-defined because points of $Gr$ are lattices up to homothety. To fix this, add the term $\frac{1}{m}\sum_{j=1}^n i_j \cdot val(\det(L_j))$.
\end{Def}

\begin{Thm}[\cite{GS}]\label{thm:GS}
Let $\vec\lambda=(\lambda^1,\ldots,\lambda^n)$ be an arbitray sequence of dominant weights. The top-dimensional components of $P(\vec\lambda)$ are in bijection with $SL_m$ $n$-hives of type $\vec\lambda$. More precisely, each top-dimensional component $Z$ contains an open dense subset $Z^o$ on which the Fock--Goncharov functions $f_{\vec i}(p)$ are constant as $p$ varies in $Z^o$, and taken together these values form a hive. Furthermore, the successive differences of the edge values of the hive along the $(i,j)$ edge give the weight-valued distance $d([g_{i}],[g_{j}])$ for a point $p=(t^0,[g_1],\ldots,[g_{n-1}])$.
\end{Thm}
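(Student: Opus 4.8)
The plan is to follow the cluster-theoretic approach of Fock--Goncharov \cite{FG} and Goncharov--Shen \cite{GS}: realize each Fock--Goncharov function as a generically constant valuation on a component, verify the hive relations by tropicalizing determinantal identities, and then match the two sides by a cardinality count. First I would argue that each $f_{\vec i}$ is a constructible, $\tfrac1m\mathbb Z$-valued function on $P(\vec\lambda)$, since on any finite-dimensional approximation of $Gr$ it is assembled from valuations of determinants of Laurent-series matrices, and such valuations are lower semicontinuous. As $\vec i$ ranges over the finite index set $\Delta^n_m$ there are only finitely many such functions, so their common locus of genericity is a dense open subset $Z^o$ of each top-dimensional component $Z$, and on $Z^o$ every $f_{\vec i}$ takes a single value. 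This produces the assignment $Z\mapsto(f_{\vec i})_{\vec i\in\Delta^n_m}$.

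Second I would check that these values satisfy the defining hive relations. The rhombus inequalities on each two-dimensional face reduce to the interlacing (dominance) of relative positions in $Gr$ together with the tropical Pl\"ucker inequalities among the relevant minors. The octahedron recurrence \eqref{oct rec} is the tropicalization of the three-term Pl\"ucker (Desnanot--Jacobi) identity $P_{ac}P_{bd}=P_{ab}P_{cd}+P_{ad}P_{bc}$ satisfied by the corresponding cluster variables: applying $-\mathrm{val}$ converts this into the max-plus relation $f_{ac}+f_{bd}=\max(f_{ab}+f_{cd},\,f_{ad}+f_{bc})$. In general one only gets $\leq$ here, and the crucial point, where positivity enters, is that on the open stratum $Gr(\vec\lambda)$ the two terms on the right have distinct valuations generically, so no cancellation occurs and the inequality is promoted to the stated equality.

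Third I would upgrade $Z\mapsto(f_{\vec i})$ to a bijection onto $SL_m$ $n$-hives of type $\vec\lambda$. Restricting to $\vec i$ supported on a pair of indices $\{i,j\}$, the values $f_{\vec i}$ compute the elementary divisors of the lattice pair $(L_i,L_j)$, so their successive differences along the $(i,j)$-edge recover the weight-valued distance $d([g_i],[g_j])$; along the boundary edges this pins the type to $\vec\lambda$ and gives the final assertion of the theorem. Injectivity of $Z\mapsto(f_{\vec i})$ holds because the $f_{\vec i}$ restrict to a cluster coordinate chart on the open configuration space, so the generic valuations recover the component. Surjectivity is then automatic by counting: by Theorem \ref{thm:hives} the number of $n$-hives of type $\vec\lambda$ equals the multiplicity of $V_\mu$ in the tensor product, which under geometric Satake equals the number of top-dimensional components of $P(\vec\lambda)$, so an injection between finite sets of equal size is a bijection.

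The step I expect to be the main obstacle is the generic equality in the octahedron recurrence: proving that the tropicalized Pl\"ucker relation holds with equality, and not merely $\leq$, requires ruling out cancellation in the valuation of a sum, which rests on the positivity of the underlying cluster variables and on the identification of $Z^o$ with a single positive cluster chart. Establishing this positivity together with the chart identification is the technical heart of \cite{FG,GS}; granting it, the remaining steps are routine bookkeeping with valuations and the counting argument above.
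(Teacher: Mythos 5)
First, a point of comparison: the paper contains no proof of this statement. Theorem \ref{thm:GS} is imported wholesale from Goncharov--Shen \cite{GS} (it is the resolution of a conjecture of Kamnitzer \cite{Kam}), so there is no internal argument to measure your proposal against; what can be assessed is whether your sketch would stand on its own. It does correctly reconstruct the overall shape of the Goncharov--Shen strategy---generic constancy of the constructible functions $f_{\vec i}$ on each irreducible component, tropicalization of Pl\"ucker-type cluster relations to obtain the hive conditions, identification of edge data with the weight-valued distances via elementary divisors, and a count against Theorem \ref{thm:hives}. But as you yourself concede in the final paragraph, the technical heart (positivity of the cluster variables and the identification of $Z^o$ with a positive chart) is simply granted by citation to \cite{FG,GS}, which makes the proposal a proof outline of the cited result rather than a proof.

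Two steps are genuinely gapped beyond that deferral. (a) Your mechanism for promoting the tropical inequality in \eqref{oct rec} to an equality is misstated: you claim that on the open stratum ``the two terms on the right have distinct valuations generically.'' This cannot be arranged by genericity, because hives in which the two sums $f_{ab}+f_{cd}$ and $f_{ad}+f_{bc}$ coincide (flat octahedra) genuinely occur, and for a component carrying such a hive the equality case holds on all of $Z^o$, not on a negligible locus. When the two valuations agree, only positivity---no cancellation of leading terms in a sum of series with positive leading coefficients in the appropriate positive structure---rescues the equality; your second paragraph and your fourth paragraph are inconsistent on this point, and the correct statement is the one in the fourth. (b) Injectivity of $Z\mapsto(f_{\vec i})$ is asserted via ``the generic valuations recover the component,'' but this is precisely the content of the Goncharov--Shen duality argument: one must show that the locus where all $f_{\vec i}$ take prescribed values is contained, densely, in a unique top-dimensional component. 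Note the paper itself warns (just after Theorem \ref{thm:GS}) that for general $\vec\lambda$ distinct components can share all generic \emph{distances}, so nothing short of the full hive separating components will do, and that separation is the theorem, not a routine step. Finally, your surjectivity-by-counting invokes Theorem \ref{thm:hives} for general $n$, whose proof in the literature is attributed to \cite{Kam,GS} itself; to avoid circularity you would need to rest the count on Kamnitzer's independent argument in \cite{Kam} rather than on \cite{GS}.
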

Hence, the distance $d([g_{i}],[g_{j}])$ is constant on points $p=(t^0,[g_1],\ldots,[g_{n-1}])$ in $Z^o$. We call these the \emph{generic distances of a component}. For arbitrary $\vec\lambda$ two different components can have the same generic distances for all $i,j$, but in the sym-alt setting this is impossible. Put differently, when $\vec\lambda$ is a sym-alt sequence, the generic distances, or equivalently the hive values along the edges, are enough to reconstruct the entire hive. In fact, the distances $d(t^0,[g_i])$ suffice. The following corollary is discussed in detail for the minuscule case in \cite{Akh}.
\begin{Cor}
Let $\vec\lambda$ be a sym-alt sequence and $(\delta,\gamma)$ the corresponding orientation-content pair. The generic distances $d(t^0,[g_i])$ of each top-dimensional component of $P(\vec\lambda)$ form a $\delta$-semistandard tableau with content $\gamma$. This is a bijection between components and $\delta$-semistandard tableaux with content $\gamma$.
\end{Cor}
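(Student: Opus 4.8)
The plan is to deduce the corollary from Theorem~\ref{thm:GS} together with the hive--tableau correspondence already set up after Theorem~\ref{thm:hives}, reading everything off the fan triangulation of the boundary $n$-gon based at the vertex corresponding to $t^0$. By Theorem~\ref{thm:GS} the top-dimensional components of $P(\vec\lambda)$ are in bijection with $SL_m$ $n$-hives of type $\vec\lambda$, and on the dense locus $Z^o$ of a component the generic distance $d([g_i],[g_j])$ is the successive difference of the hive values along the $(me_i,me_j)$ edge. Identifying the vertex $me_1$ with $[g_0]=t^0$, the generic distances $d(t^0,[g_i])$ are exactly the fan-edge weights $\alpha_{1,i+1}$ emanating from $me_1$, so it suffices to show that these weights assemble into a $\delta$-semistandard tableau of shape $\mu$ and content $\gamma$, and that the resulting map is a bijection.

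First I would recover the tableau. Each triangle $(me_1,me_k,me_{k+1})$ of the fan is a $3$-hive whose boundary edges carry $\alpha_{1,k}$, $\lambda^k$, and $\alpha_{1,k+1}$; since $\lambda^k=\sigma_{\gamma_k}$ or $\omega_{\gamma_k}$, Lemmas~\ref{lem:sym} and \ref{lem:alt} (the Pieri rule) force $\alpha_{1,k+1}$ to differ from $\alpha_{1,k}$ by a horizontal strip of size $\gamma_k$ when $\delta_k=\mathbf h$ and a vertical strip when $\delta_k=\mathbf v$. Normalizing the $SL_m$ hive so that $\alpha_{1,1}=\vec 0$, the chain $\vec 0=\alpha_{1,1}\subset\alpha_{1,2}\subset\cdots\subset\alpha_{1,n}$ lifts to an honest nested sequence of partitions, and the closing relation $[g_n]=[g_0]=t^0$ supplies one last $\lambda^n$-strip $\mu/\alpha_{1,n}$; the terminal shape is forced to be the rectangle $\mu$ because $\widehat{\mu}=\vec 0$ together with $\lvert\mu\rvert=\lvert\gamma\rvert$ and at most $m$ rows pin it down. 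Filling the cells of $\alpha_{1,k+1}/\alpha_{1,k}$ (and of $\mu/\alpha_{1,n}$) with $k$ (resp.\ $n$) then gives a $\delta$-semistandard tableau of content $\gamma$, which is precisely the $GL_m$ fan construction used after Theorem~\ref{thm:hives}, now run for the closed-up $SL_m$ $n$-hive.

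For bijectivity I would argue that the fan data reconstructs the entire hive. Each fan triangle is a $3$-hive of a Pieri type $(\alpha_{1,k},\lambda^k;\alpha_{1,k+1})$, and by Theorem~\ref{thm:hives} with the multiplicity-freeness of $V_{\lambda^k}\otimes V_{\alpha_{1,k}}$ for symmetric or alternating $\lambda^k$, there is at most one $3$-hive with these boundary values; gluing the uniquely determined triangles recovers the whole $n$-hive. Hence the chain $(\alpha_{1,k})_k$ determines the hive, and conversely every $\delta$-semistandard tableau of shape $\mu$ and content $\gamma$ produces such a chain and thus a unique $SL_m$ $n$-hive of type $\vec\lambda$. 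Composing with the bijection of Theorem~\ref{thm:GS} yields the claimed bijection between components and tableaux, exactly as in the minuscule case treated in \cite{Akh}.

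The main obstacle is the bookkeeping passing between $SL_m$ and $GL_m$: the hive values live in $\tfrac1m\mathbb Z$ and the edge data only records the traceless weights $\widehat{\lambda^i}$, so one must fix the normalization $\alpha_{1,1}=\vec 0$, check that building up by strips yields genuine (non-negative) partitions rather than bare $SL_m$-weights, and verify that the closing constraint forces the terminal shape to be exactly the rectangle $\mu$ and not some other partition of the same size. Once this is handled, the remaining steps---that each fan triangle has Pieri type and that multiplicity-freeness gives the uniqueness used above---are immediate from Lemmas~\ref{lem:alt} and \ref{lem:sym} and Theorem~\ref{thm:hives}.
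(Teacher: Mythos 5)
Your proposal is correct and follows essentially the same route as the paper's proof: Theorem~\ref{thm:GS} to pass from components to $n$-hives, the fan triangulation based at $me_1$ with the Pieri rule (Lemmas~\ref{lem:alt} and~\ref{lem:sym}) to read the chain $\alpha_{1,2}\subset\cdots\subset\alpha_{1,n}$ as a $\delta$-semistandard tableau of content $\gamma$, and multiplicity-freeness of tensoring with $V_{\omega_k}$ or $V_{\sigma_k}$ to see that the $(1,i)$-edge values determine each fan $3$-hive and hence, via the octahedron recurrence, the entire $n$-hive. Your additional bookkeeping about the $SL_m$ normalization $\alpha_{1,1}=\vec 0$ and the terminal rectangular shape is a detail the paper leaves implicit, but the argument is the same.
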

\begin{proof}
By the previous Theorem each component corresponds to a $n$-hive and for all $i$ the distance $d(t^0,[g_i])$ is given by the differences along the $(1,i)$-edge. 

For any triangulation $T$ of the $n$-gon, the face hive values of the faces of $T$ determine the entire $n$-hive by the octahedron recurrence. Let $T$ be the fan triangulation with edges $(1,i)$ for all $i$, so that each triangle contains an external edge of the $n$-gon. Since each external edge is labelled by a $\omega_k$ or $\sigma_k$, by the Pieri rule the weights $d(t^0,[g_1]),\ldots,d(t^0,[g_n])$ form a $\delta$-semistandard tableau of content $\gamma$. Since $3$-hives count tensor product multiplicities and tensoring with $V_{\omega_k}$ or $V_{\sigma_k}$ is multiplicity free, the edge hive values along the $(1,i)$ edges uniquely determine the $3$-hives in the fan triangulation, which, as already mentioned, determine the entire $n$-hive. 
\end{proof}

Define geometric rotation as in the minuscule case. Theorem \ref{thm:GS} together with Proposition \ref{prop:main} yield the analogue of Theorem \ref{thm:FK}, part $(1)$ (and also gives an alternative proof).
\begin{Thm}\label{thm:geomrotsymalt}
Let $\vec\lambda$ be a sym-alt sequence and $(\delta,\gamma)$ the corresponding orientation-content pair. Geometric rotation $R_{geom}:H_{top}(P(\vec\lambda))\rightarrow H_{top}(P(R(\vec\lambda)))$ maps a component labelled by tableau $T$ to the component labelled by tableau $\partial_\delta(T)$, i.e. $R_{geom}([Z_T])=[Z_{\partial_\delta(T)}]$.
\end{Thm}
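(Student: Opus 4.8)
The plan is to argue entirely on the geometric side, transporting the action of $R_{geom}$ on components to an action on $SL_m$ $n$-hives via the Goncharov--Shen description (Theorem \ref{thm:GS}), and then to identify that action with $\delta$-promotion using the local analysis of Proposition \ref{prop:main}. First I would observe that $R_{geom}$ is induced by an isomorphism of varieties $P(\vec\lambda)\to P(R(\vec\lambda))$: it is simultaneous left multiplication by $g_1^{-1}$ followed by a cyclic relabeling of the factors. Being an isomorphism, it carries top-dimensional components bijectively to top-dimensional components, so $R_{geom}([Z_T])=[Z_{T'}]$ for a well-defined tableau $T'$, and it remains only to compute $T'$.

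The crux is to show that the hive attached to $Z'=R_{geom}(Z)$ is the cyclic rotation of the hive attached to $Z$. The key point is that the Fock--Goncharov functions $f_{\vec i}$ are invariant under simultaneous left multiplication $[g_j]\mapsto[hg_j]$ by a fixed $h\in PGL_m(\mathcal K)$: since $\sum_j i_j=m$, replacing every representative lattice $L_j$ by $hL_j$ multiplies each determinant $\det(v_{11},\ldots,v_{ni_n})$ by $\det h$, contributing $\mathrm{val}(\det h)$ to $-\mathrm{val}(\det(\cdots))$, and this is cancelled exactly by the normalizing term $\tfrac1m\sum_j i_j\,\mathrm{val}(\det L_j)$. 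Hence the generic value of $f_{\vec i}$ on $Z'$ agrees with its generic value on $Z$ after the cyclic reindexing of the points, so the hive of $Z'$ is obtained from that of $Z$ by rotating the labels of the $n$-gon by one step. Equivalently, by $PGL_m$-invariance of the distance function the generic distances of $Z'$ satisfy $d([h_i],[h_j])=d([g_{i+1}],[g_{j+1}])$, indices taken cyclically with $[g_0]=[g_n]=t^0$.

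Finally I would read off $T'$. By the Corollary preceding the theorem, $T'$ is encoded by the nested partitions $d([h_0],[h_i])$ for $i=1,\ldots,n-1$, and the previous step gives $d([h_0],[h_i])=d([g_1],[g_{i+1}])$. In the staircase diagram of the weights $\alpha_{ij}=d([g_{i-1}],[g_{j-1}])$ used in the proof of Theorem \ref{thm:periodicity}, the tableau $T$ is the first row, and this sequence $d([g_1],[g_{i+1}])$ is precisely the second row. Proposition \ref{prop:main} shows that the partitions around each unit square of that diagram are related by the $\delta$-BK involution $t_i$, so passing from the first row to the second applies $t_{n-1}\cdots t_1$; by Corollary \ref{cor:BK decomp} this is exactly $\partial_\delta$. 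Therefore $T'=\partial_\delta(T)$, as claimed.

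I expect the main obstacle to be the second step: justifying rigorously that geometric rotation acts on the Goncharov--Shen hives by honest cyclic rotation. This requires the invariance computation for the Fock--Goncharov functions together with care about the homothety normalization that makes them well-defined, and it requires reconciling the cyclic $SL_m$-hive picture (in which rotation is literal) with the truncated, shifted $GL_m$-staircase appearing in the periodicity argument. Working instead with the $SL_m$ hives described at the close of Section \ref{sec:periodicity}, whose values lie in $\tfrac1m\mathbb Z$, avoids the shift-and-merge bookkeeping and renders the rotation manifestly cyclic, so I would phrase steps two and three in that setting.
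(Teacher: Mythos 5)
Your proposal is correct and takes essentially the same route as the paper's own (very terse) proof: Theorem \ref{thm:GS} attaches a hive to each component, the $G(\mathcal K)$-invariance of the distance function---which you verify explicitly via the Fock--Goncharov determinant and normalization computation---shows that the hive of the rotated component is the cyclic rotation of the original one, and repeated application of the $SL_m$ analogue of Proposition \ref{prop:main}, together with Corollary \ref{cor:BK decomp}, identifies the generic distances $d([g_1],[g_i])$ with the tableau $\partial_\delta(T)$. The paper's three-sentence argument is exactly this, with the invariance computation and the staircase second-row bookkeeping left implicit, so your write-up is a correct expansion rather than a different proof.
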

\begin{proof}
By Theorem \ref{thm:GS} the component $Z_T$ contains an open dense set on which the Fock--Goncharov functions form a hive. By repeated applications (of the $SL_m$ analogue) of Proposition \ref{prop:main} the generic distances $d([g_1],[g_i])$ form the $\delta$-semistandard tableau $\partial_\delta(T)$ with content given by $R(\vec\lambda)$. Since the distance function is $G(\mathcal K)$-invariant, the component $R_{geom}(Z_T)$ is $Z_{\partial_\delta(T)}$.
\end{proof}

Now that we have established that components of $P(\vec\lambda)$ are labelled by $\delta$-semistandard tableaux and geometric rotation corresponds to $\delta$-promotion, our goal is to prove the analogue of Theorem \ref{thm:FK}, part (2). \begin{Thm}\label{thm:commutes}
Let $\vec\lambda=(\lambda^1,\ldots,\lambda^n)$ be a sym-alt sequence. The following diagram commutes up to a factor of $(-1)^{\langle \lambda^1,2\rho\rangle}$.
$$
\begin{tikzcd}
\Inv(\vec{\lambda}) \arrow[r, "Satake"] \arrow[d, "R"] & H_{\text{top}}(P(\vec\lambda)) \arrow[d, "R_{geom}"] \\
\Inv(R(\vec{\lambda})) \arrow[r, "Satake"] & H_{\text{top}}(P(R(\vec\lambda)))
\end{tikzcd}
$$
\end{Thm}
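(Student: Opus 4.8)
The plan is to reduce to the minuscule case of Fontaine--Kamnitzer (Theorem~\ref{thm:FK}(2)) by embedding $\Inv(\vec\lambda)$ into the invariants of a minuscule sequence. Both $V_{\sigma_k}$ and $V_{\omega_k}$ occur as direct summands of $V_{\omega_1}^{\otimes k}$, namely the images of the symmetrizer and of the antisymmetrizer, so writing each sym-alt weight as $\lambda^i=\sigma_{j_i}$ or $\lambda^i=\omega_{j_i}$, there is a canonical $SL_m$-equivariant inclusion $V_{\lambda^i}\hookrightarrow V_{\omega_1}^{\otimes j_i}$. Letting $\vec\mu=(\omega_1,\ldots,\omega_1)$ be the minuscule sequence with $N=\lvert\gamma\rvert$ factors and tensoring these inclusions yields a split $SL_m$-equivariant embedding $\iota\colon\Inv(\vec\lambda)\hookrightarrow\Inv(\vec\mu)$.

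First I would record the algebraic compatibility of rotation with $\iota$. The operator $R$ on $\Inv(\vec\lambda)$ moves the single factor $V_{\lambda^1}$ to the end, while $R_\mu$ on $\Inv(\vec\mu)$ moves one copy of $\omega_1$ at a time. Since rotation of tensor factors is the corresponding permutation with no sign, applying $R_\mu$ a total of $j_1$ times moves the first block of $j_1$ copies of $\omega_1$ to the tail, and this restricts under $\iota$ to $R$; that is, $R_\mu^{j_1}\circ\iota=\iota\circ R$, with no intervening sign.

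Next I would transport this through geometric Satake. As it is a tensor functor, Satake carries $\iota$ to a split injection on top homology $\iota_*\colon H_{\mathrm{top}}(P(\vec\lambda))\hookrightarrow H_{\mathrm{top}}(P(\vec\mu))$ compatible with the Satake isomorphism and with $\iota$. Geometrically this is realized by grouping the lattices $[h_1],\ldots,[h_{N-1}]$ of a point of $P(\vec\mu)$ into blocks at the partial sums $J_i=j_1+\cdots+j_i$ and restricting to the stratum on which the distance across the $i$th block equals $\lambda^i$; this locus recovers a dense subset of $P(\vec\lambda)$, and the summand $V_{\lambda^i}\subset V_{\omega_1}^{\otimes j_i}$ is exactly the piece cut out by this distance condition. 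The crucial step is then to check that geometric rotation respects the blocking: from the explicit formula for $R_{\mathrm{geom}}$ (conjugate every lattice by $g_1^{-1}$ and cycle), applying $R_{\mathrm{geom},\mu}$ exactly $j_1$ times sends the first $j_1$ lattices to the tail after the common twist and preserves the stratum where the block distances equal the $\lambda^i$, hence restricts to $R_{\mathrm{geom},\lambda}$ on the image of $\iota_*$, giving $R_{\mathrm{geom},\mu}^{j_1}\circ\iota_*=\iota_*\circ R_{\mathrm{geom},\lambda}$. I expect this compatibility, i.e. isolating the correct summand and verifying that iterated geometric rotation descends through the blocking, to be the main obstacle.

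Finally I would assemble the signs. Theorem~\ref{thm:FK}(2) applied to the minuscule sequence $\vec\mu$ gives $R_{\mathrm{geom},\mu}\circ\mathrm{Satake}=(-1)^{\langle\omega_1,2\rho\rangle}\,\mathrm{Satake}\circ R_\mu$, since every factor of $\vec\mu$ equals $\omega_1$. Composing $j_1$ times and feeding in the two compatibilities above yields $R_{\mathrm{geom},\lambda}\circ\mathrm{Satake}=(-1)^{j_1\langle\omega_1,2\rho\rangle}\,\mathrm{Satake}\circ R$. It then remains to verify $j_1\langle\omega_1,2\rho\rangle\equiv\langle\lambda^1,2\rho\rangle\pmod 2$. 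When $\lambda^1=\sigma_{j_1}=j_1\omega_1$ this is an equality. When $\lambda^1=\omega_{j_1}$ one computes $\langle\omega_{j_1},2\rho\rangle=j_1(m-j_1)$ against $j_1\langle\omega_1,2\rho\rangle=j_1(m-1)$, whose difference $j_1(j_1-1)$ is even; this parity comparison is independent of the representative chosen for $2\rho$ modulo $(1,\ldots,1)$ since both weights have total size $j_1$. Hence the two signs agree and the diagram commutes up to $(-1)^{\langle\lambda^1,2\rho\rangle}$.
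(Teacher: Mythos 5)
There is a genuine gap, and it lies exactly where you predicted: the geometric compatibility of the blocking with the Satake basis. Your choice $\vec\mu=(\omega_1,\ldots,\omega_1)$ decomposes \emph{every} factor, including the minuscule ones $\omega_{j}$ with $j\geq 2$, and for those blocks the geometry does not behave as your sketch assumes. The convolution of $j$ copies of $Gr_{\omega_1}$ has image $\overline{Gr(j\omega_1)}=\overline{Gr(\sigma_j)}$, not $\overline{Gr(\omega_j)}$; over the stratum where the block distance equals $\omega_j$, a point of $P(\vec\lambda)$ has as fiber the set of chains $L_0\subset L_1\subset\cdots\subset L_j$ of elementary modifications with $L_j/L_0\cong\mathbb{C}^j$ (with $t$ acting by zero, so every intermediate subspace is automatically a lattice), i.e.\ a full flag variety $Fl_j$ of dimension $j(j-1)/2>0$. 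So the locus you restrict to does not ``recover a dense subset of $P(\vec\lambda)$''; it is a flag bundle over it. Concretely, $\dim P(\vec\mu)-\dim P(\vec\lambda)=\sum_{\delta_i=\mathbf{v}} j_i(j_i-1)/2>0$ whenever some $j_i\geq 2$ is vertical (since $\langle\omega_j,2\rho\rangle=j(m-j)<j(m-1)=j\langle\omega_1,2\rho\rangle$), so the top homology groups sit in different degrees, pushforward in top degree is zero, and there is no way to realize $\iota_*$ as an isomorphism onto a coordinate subspace spanned by component classes. Without knowing that $\iota_*$ takes the component basis to (signed) component classes, you cannot apply Theorem \ref{thm:FK} on the $\vec\mu$ side and descend $R_{geom,\mu}^{j_1}$ to $R_{geom,\lambda}$; defining $\iota_*$ tautologically as $\mathrm{Satake}\circ\iota\circ\mathrm{Satake}^{-1}$ gives a split injection but carries none of the geometric content your argument needs.

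The paper's proof avoids this by decomposing \emph{only} the symmetric factors: $\vec\mu$ replaces each $\sigma_k$ by $k$ copies of $\omega_1$ and keeps each $\omega_j$ as a single factor (it is already minuscule). Then the projection $\phi:\widetilde{Gr}(\vec\mu)\to\widetilde{Gr}(\vec\lambda)$ is birational — over the open stratum $Gr(\vec\lambda)$ the interpolating chain of $\omega_1$-steps between lattices at distance exactly $\sigma_k$ is \emph{unique}, in contrast to the $\omega_j$ case above — and a theorem of Haines guarantees every top-dimensional component of $P(\vec\lambda)$ meets the open stratum, so $\phi_*$ restricts to an isomorphism from the subspace $W$ spanned by components meeting $\phi^{-1}(Gr(\vec\lambda))$. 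The remaining subtlety, that this geometric isomorphism matches the algebraic inclusion $\psi:\Inv(\vec\lambda)\to\Inv(\vec\mu)$ under Satake (the top and bottom faces of the paper's cube), is not formal and is cited to \cite[Lem.~4.2]{Fon}; your sketch has no substitute for this, and your stratum-restriction mechanism cannot supply one for the $\omega_j$ blocks for the dimension reasons above. One further difference: the paper rotates only $k$ times with $k=1$ when $\lambda^1$ is minuscule, so its sign is $(-1)^{\langle\mu^1+\cdots+\mu^k,2\rho\rangle}=(-1)^{\langle\lambda^1,2\rho\rangle}$ on the nose, whereas your parity computation $j_1(m-1)\equiv j_1(m-j_1)\pmod 2$ — which is correct as far as it goes — is only needed because of the problematic full decomposition. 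Your algebraic identity $R_\mu^{j_1}\circ\iota=\iota\circ R$ and the multiplicity-one observations are fine; the proof strategy should be repaired by adopting the asymmetric decomposition.
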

\begin{proof}
Let $\vec\mu$ be the same sequence as $\vec\lambda$, but with each $\sigma_k$ replaced with $k$ many $\omega_1$'s. More precisely, $\vec\mu=((\mu^{11},\ldots,\mu^{1k_1}),\ldots,(\mu^{n1},\ldots,\mu^{nk_n}))$ where $k_j=1$ and $\mu^{j1}=\lambda^j$ if $\lambda^j$ is minuscule and $\mu^{j1}=\cdots=\mu^{jk}=\omega_1$ if $\lambda^j=\sigma_k$. In particular, $\lvert \vec\lambda\rvert=\lvert\vec\mu\rvert$. We have an inclusion of invariants $\psi:\Inv(\vec\lambda)\rightarrow \Inv(\vec\mu)$. We also have the following projection morphism 
\begin{align*}
\phi:\widetilde{Gr}(\vec\mu)=\left(Gr_{\mu^{11}}\widetilde\times\cdots\widetilde\times Gr_{\mu^{1{k_1}}}\right)\widetilde\times\cdots\widetilde\times \left(Gr_{\mu^{n1}}\widetilde\times\cdots\widetilde\times Gr_{\mu^{n{k_n}}}\right)\rightarrow \widetilde{Gr}(\vec\lambda), 
\end{align*}
which commutes with the convolution morphisms $l$ and $m$.
\begin{equation*}
\begin{tikzcd}
\widetilde{Gr}(\vec\mu)  \arrow[d, "\phi"] \arrow[dr, "m"]&\\
\widetilde{Gr}(\vec\lambda) \arrow[r,"l"]& Gr_{\lvert \vec\lambda\rvert}
\end{tikzcd}
\end{equation*}
The morphism $\phi$ restricts to $P(\vec\mu)\rightarrow P(\vec\lambda)$ and induces a pushforward on homology $\phi_*:H_{top}(P(\vec\mu))\rightarrow H_{top}(P(\vec\lambda))$. Note that $\widetilde{Gr}(\vec\mu)$ is smooth, and $\phi$ is birational and an isomorphism over the open stratum $Gr(\vec\lambda)$. Set $U=\phi^{-1}(Gr(\vec\lambda))$. The map $\phi^{-1}$ restricted to $Gr(\vec\lambda)$ is easy to describe. Suppose $\lambda^j=\sigma_k$, so that for a point $g=(t^0,[g_1],\ldots,[g_{n-1}])$ we have $d([g_{j-1}],[g_j])=\sigma_k$. Then there is a unique set of points $[h_{j1}],[h_{j2}]\ldots,[h_{jk+1}]$ such that $d([h_{ji}],[h_{ji+1}])=\omega_1$ for all $i$ and $[h_{j1}]=[g_{j-1}]$ and $[h_{jk+1}]=[g_j]$.

Haines \cite{Hai2} showed that every top-dimensional component of $P(\vec\lambda)$ intersects the open stratum $Gr(\vec\lambda)$, so any component of $P(\vec\mu)$ that intersect $U$ has an open dense set that maps isomorphically to an open dense subset of a top-dimensional component of $P(\vec\lambda)$. All other components of $P(\vec\mu)$ map to lower dimensional components of $P(\vec\lambda)$, so the map $\phi_*:H_{top}(P(\vec\mu))\rightarrow H_{top}(P(\vec\lambda))$ sends any component disjoint from $U$ to zero. Letting $W$ be the subspace of $H_{top}(P(\vec\mu))$ spanned by the Satake basis vectors corresponding to components that meet $U$, we see that $\phi_*$ is an isomorphism between $W$ and $H_{top}(P(\vec\lambda))$. We'll think of $\phi_*$ as restricted to $W$. 

Suppose that $\lambda^1=\sigma_k$ (or if $\lambda^1$ is minuscule, then let $k=1$). We claim that the left face of the following cube commutes because the other faces commute (up to appropriate sign). The maps coming out of the page are from the geometric Satake isomorphism.

\begin{equation*}
\begin{tikzcd}[row sep=scriptsize, column sep=scriptsize]
& H_{\text{top}}(P(\vec\lambda)) \arrow[dl] \arrow[rr, "\phi_*^{-1}" near start] \arrow[dd, "R_{geom}" near start] & & W\subset H_{\text{top}}(P(\vec\mu)) \arrow[dl] \arrow[dd, "R_{geom}^k" near start] \\
\Inv(\vec\lambda) \arrow[rr, "\psi" near end] \arrow[dd, "R" near start] & & \Inv(\vec\mu) \\
& H_{\text{top}}(P(R(\vec\lambda))) \arrow[dl] \arrow[rr, "\phi_*^{-1}" near start] & & W'\subset H_{\text{top}}(P(R^k(\vec\mu))) \arrow[dl] \\
\Inv(R(\vec\lambda)) \arrow[rr, "\psi" near end] & & \Inv(R^k(\vec\mu)) \arrow[from=uu,  "R^k" near start]\\
\end{tikzcd}
\end{equation*}

Note that the front face commutes since rotating tensor factors $k$ times preserves direct summands within those $k$ factors. The right face is an instance of the Theorem \ref{thm:FK} applied $k$ times, so it commutes up to $(-1)^{\langle \mu^1+\cdots+\mu^k, 2\rho\rangle}=(-1)^{\langle \lambda^1, 2\rho\rangle}$. The back face commutes by the description of $\phi^{-1}$ restricted to the open stratum $Gr(\vec\lambda)$. That the top and bottom faces commute is a bit more subtle and is proved in \cite[Lem.~4.2]{Fon}.
\end{proof}

Note that the above argument can be applied to an arbitrary sequence of dominant weights $\vec{\nu}$ where in this case $\nu^1$ is decomposed as a sum of fundamental weights, not necessarily all $\omega_1$. As we were finishing this manuscript, Joel Kamnitzer pointed out the recent preprint of Baumann, Gaussent, and Littelmann to us. In \cite{BGL20}, Theorem 6.1, they prove that geometric rotation takes the Satake basis of $\Inv (\vec \nu)$ to $\Inv (R(\vec \nu))$ for an arbitrary sequence of $n$ weights $\vec \nu$. Their proof uses the crystal structure on $V_{\nu^1}\otimes \ldots \otimes V_{\nu^n}$, and, as far as we could tell, uses ideas different from ours, so we decided to still include our proof of Theorem \ref{thm:geomrotsymalt} above.

\subsection{Diagonalization Via the Fusion Product}
The diagonalization of the action of $R^r$ follows exactly as in \cite[\S~4.2]{FK} (see also \cite[\S~8.3]{Wes}), so we will be brief. Consider the current algebra $\mathfrak{sl_m}[t]=\mathfrak{sl_m}\otimes \mathbb C[t]$. To each $\mathfrak sl_m$ representation $V$ and $z\in\mathbb C$ is associated the evaluation representation $V^z$ of $\mathfrak{sl_m}[t]$ where $X\otimes f(t)\cdot v=f(z)X\cdot v$ for $X\in\mathfrak{sl}_m$, $f(t)\in \mathbb C[t]$, $v\in V$. 
We need the graded tensor product, usually called the fusion product, as defined by Feigin and Loktev \cite{FL}. Fix a sym-alt sequence $\vec\lambda$ and for each $i$ let $v_i\in V_{\lambda^i}$ be a highest weight vector. Consider the $\mathfrak sl_m[t]$ representation $V_{\lambda^1}^{z_1}\otimes\cdots\otimes V_{\lambda^n}^{z_n}$ for $z_i\in \mathbb C$ where $\mathfrak{sl}_m[t]$ acts via the coproduct. For pairwise distinct $z_i$ the vector $v_1\otimes \cdots\otimes v_n$ is a cyclic vector, so that $U(\mathfrak{sl}_m[t])\left(v_1\otimes\cdots\otimes v_n\right)\cong V_{\lambda^1}^{z_1}\otimes\cdots\otimes V_{\lambda^n}^{z_n}$.

The algebra $U(\mathfrak{sl_m}[t])$ is graded by the degree of $t$ with filtered piece $U^{\leq k}$ consisting of degree $k$ or less. This induces a filtration on $V_{\lambda^1}^{z_1}\otimes\cdots\otimes V_{\lambda^n}^{z_n}$ given by $\mathcal F^{\leq k}=U^{\leq k}\left( v_1\otimes\cdots\otimes v_n\right)$ where we set $\mathcal F^{-1}=0$. Then define the fusion product to be the associated graded of this filtration:
\begin{align*}
V_{\lambda^1}^{z_1}\star\cdots \star V_{\lambda^n}^{z_n}:=\oplus \mathcal F^{\leq k}/\mathcal F^{\leq k-1}.
\end{align*}
Each graded component is an $\mathfrak sl_m$ representation and as an $\mathfrak sl_m$ representation we have the decomposition
\begin{align*}
V_{\lambda^1}^{z_1}\star\cdots \star V_{\lambda^n}^{z_n}\cong \oplus_\mu \oplus_{n\geq 0} V_\mu^{\oplus K_{\mu,\vec\lambda}[n]},
\end{align*}
a graded version of the tensor product $V_{\lambda^1}\otimes\cdots\otimes V_{\lambda^n}$. Note that Feigin and Loktev conjectured that the multiplicities do not depend on the parameters as long as they are pairwise distinct, which was proved for multiples of fundamental weights in \cite{AKS}. The isotypic component for each $\mu$ is graded and we define the polynomial $K_{\mu,\vec\lambda}(q)=\sum_n K_{\mu,\vec\lambda}[n]q^n$. It was shown in \cite{AKS} that this polynomial agrees with the generalized Kostka polynomial of \cite{SW,SS,KS}.

Let $\vec\lambda$ be a sym-alt sequence and $r$ is the smallest integer such that $R^r(\vec\lambda)=\vec\lambda$ and set $l=n/r$. Let $W$ be the corresponding fusion product with cyclic vector $v_1\otimes\cdots\otimes v_n$. There is a rotation map $R^r:W\rightarrow W$ given by rotation of tensor factors $r$ times, $R^r(v_1\otimes\cdots\otimes v_n)=v_{r+1}\otimes\cdots v_{r-1}\otimes v_r$. Let $\zeta$ be a primitive $l$th root of unity.

\begin{Prop}[\cite{FK,Wes}]
Let $r$ be as above, $l=n/r$, and $\eta$ a primitive $n$th root of unity such that $\eta^r=\zeta$. Let $z_i=\eta^i$. With the notation as above, $R^r$ acts on the $k$th graded piece as $\eta^{kr}=\zeta^k$.
\end{Prop}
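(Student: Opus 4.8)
The plan is to show that rotating the tensor factors, once read on the associated graded, is \emph{literally} the scaling automorphism of the current algebra that multiplies $t$ by $\zeta$, and that this scaling acts on the degree-$k$ piece by $\zeta^k$. First I would record the two structural inputs. Write $M(z_1,\ldots,z_n)=V_{\lambda^1}^{z_1}\otimes\cdots\otimes V_{\lambda^n}^{z_n}$ for the underlying $\mathfrak{sl}_m[t]$-module, with cyclic vector $w=v_1\otimes\cdots\otimes v_n$ and filtration $\mathcal F^{\leq k}=U^{\leq k}\cdot w$, so that $W=\operatorname{gr}M(z)$. Let $\theta_a$ be the Hopf-algebra automorphism of $U(\mathfrak{sl}_m[t])$ with $\theta_a(X\otimes t^d)=a^d(X\otimes t^d)$. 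A one-line check gives $\rho_z\circ\theta_a=\rho_{az}$ on each evaluation module, and since $\theta_a$ respects the coproduct, $\theta_a^{*}M(z_1,\ldots,z_n)=M(az_1,\ldots,az_n)$ on the \emph{same} underlying vector space. Because $\theta_a$ preserves each graded piece of $U$, hence the degree filtration $U^{\leq k}$, and fixes $w$, the modules $M(z)$ and $M(az)$ share one and the same filtration $\mathcal F^{\bullet}$, and on $\operatorname{gr}$ the action of a homogeneous degree-$d$ element in $M(az)$ equals $a^d$ times its action in $M(z)$.

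Next I would specialize $z_i=\eta^i$ and identify rotation with scaling. The operator $R^r$ permutes tensor factors by $r$; since $R^r(\vec\lambda)=\vec\lambda$ the underlying vector space $V_{\lambda^1}\otimes\cdots\otimes V_{\lambda^n}$ is preserved, and by cocommutativity of the coproduct the permutation $\sigma_r\colon u_1\otimes\cdots\otimes u_n\mapsto u_{1+r}\otimes\cdots\otimes u_{n+r}$ is a module isomorphism from $M(z)$ onto $M(z_{1+r},\ldots,z_{n+r})$. With $z_i=\eta^i$ one has $z_{i+r}=\eta^r z_i=\zeta z_i$, so the target is precisely $M(\zeta z)=\theta_\zeta^{*}M(z)$, while $r$-periodicity of the weights forces $\sigma_r(w)=w$. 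By the previous paragraph $\sigma_r$ therefore maps $\mathcal F^{\leq k}$ to itself, so it descends to a degree-preserving operator $R^r=\overline{\sigma_r}$ on $W=\operatorname{gr}M(z)$ fixing $\overline w\in\operatorname{gr}^{0}$.

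Finally I would compute on a generating set. Since $M(z)$ is cyclic, $\operatorname{gr}^{k}=(\operatorname{gr}U)_k\cdot\overline w$, so any $x\in\operatorname{gr}^{k}$ has the form $\overline g\cdot_{z}\overline w$ with $\overline g$ homogeneous of degree $k$. Using that $\overline{\sigma_r}$ intertwines the $\cdot_z$- and $\cdot_{\zeta z}$-actions and fixes $\overline w$, together with $\overline g\cdot_{\zeta z}\overline w=\overline{\theta_\zeta(g)}\cdot_{z}\overline w=\zeta^{k}\,\overline g\cdot_{z}\overline w$, I obtain $R^r(x)=\overline{\sigma_r}(\overline g\cdot_z\overline w)=\overline g\cdot_{\zeta z}\overline w=\zeta^{k}x$. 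As these elements span $\operatorname{gr}^{k}$, the operator $R^r$ acts there by $\zeta^{k}=\eta^{kr}$, as claimed.

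The crux—and the step I expect to be the main obstacle—is the reconciliation in the second paragraph: one must verify that rotating the parameters $z_i\mapsto\zeta z_i$ is the \emph{same} operation as the grading-scaling $\theta_\zeta$, and that both leave the filtration $\mathcal F^{\bullet}$ and the cyclic vector $w$ untouched, so that $R^r$ is genuinely a single endomorphism of $W$ rather than a map between two a priori distinct fusion products. Everything else is bookkeeping with the cyclic vector and the observation that $z_i=\eta^i$ are pairwise distinct $n$th roots of unity, so that the graded multiplicities computed here are the Feigin--Loktev (equivalently, generalized Kostka) ones of \cite{AKS}.
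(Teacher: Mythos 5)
Your proposal is correct and takes essentially the same approach as the paper: your intertwining identity $\sigma_r\circ\rho_z(g)=\rho_z(\theta_\zeta(g))\circ\sigma_r$ together with $\sigma_r(w)=w$ is precisely the paper's direct computation $R^r\left(Xt^k\, v_1\otimes\cdots\otimes v_n\right)=\eta^{kr}\, Xt^k\, v_1\otimes\cdots\otimes v_n$, repackaged through the scaling automorphism $\theta_\zeta$ of $U(\mathfrak{sl}_m[t])$. Your additional verifications --- that $\theta_\zeta$ preserves the filtration $\mathcal F^{\leq k}$, that the indices wrap consistently because $\eta^n=1$, and that the scalar action descends to each graded piece --- make explicit steps the paper leaves implicit, but the underlying mechanism is identical.
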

\begin{proof}
This is a direct computation.
\begin{align*}
R^r\left(Xt^k v_1\otimes\cdots\otimes v_n\right)&=R^r\left(\sum \eta^{kj}v_1\otimes\cdots\otimes Xv_j\otimes\cdots\otimes v_n\right)\\
&=\sum \eta^{kj}v_{r+1}\otimes\cdots \otimes Xv_j\otimes\cdots\otimes v_r\\
&=\eta^{kr} (Xt^k R^r(v_1\otimes\cdots\otimes v_n))\\
&=\eta^{kr} (Xt^k v_1\otimes\cdots\otimes v_n)\\
\end{align*}
\end{proof}

Putting everything together, we see that promotion on $RT_m(\delta,\gamma)$ corresponds to the linear map $(-1)^{\langle\lambda^1+\ldots+\lambda^r,2\rho\rangle}R^r$ on $\Inv(\vec\lambda)$. The trace of $R^{dr}$ on $\Inv(\vec\lambda)$ is $K_{\nu,\vec\lambda}(\zeta^d)$. Hence the trace of $(-1)^{d\langle\lambda^1+\ldots+\lambda^r,2\rho\rangle}R^{dr}$ is 
\begin{align*}
(-1)^{d\langle\lambda^1+\ldots+\lambda^r,2\rho\rangle}K_{\mu,\vec\lambda}(\zeta^d)&=(\zeta^{l/2})^{d\langle\lambda^1+\ldots+\lambda^r,2\rho\rangle}K_{\mu,\vec\lambda}(\zeta^d)\\
&=(\zeta^d)^{\langle\lvert\vec\lambda\rvert,\rho\rangle}K_{\mu,\vec\lambda}(\zeta^d).
\end{align*}
Hence, the cyclic sieving polynomial is $q^{\langle\lvert\vec\lambda\rvert,\rho\rangle}K_{\mu,\vec\lambda}(q)$.

\begin{Rem}
We conclude with the observation that the proofs of sections 4.1 and 4.2 go through without change when $\vec\lambda$ is a sequence of arbitrary dominant weights (in the proof of Theorem \ref{thm:commutes} decompose each $\lambda^i$ as a sum of minuscule weights given by its columns). In this case, there is a cyclic sieving phenomenon where the set $X$ is the set of hives with boundary $\vec\lambda$ and the polynomial is an appropriate $q$-multiple of the corresponding graded multiplicity polynomial defined by the fusion product. It would be interesting if there is a statistic on hives computing these polynomials. 
\end{Rem}

\subsection{Example Computation with the Fusion Product}

As an example, we'll use the fusion product to compute a generalized Kostka polynomial. Let $\mathfrak{g}=\mathfrak{sl}_2$ with the usual basis $e,f,h$, let $\vec\lambda=((1,0),(1,0),(1,0),(1,0))$ and $\mu=(2,2)$. Note that this example is already covered by Rhoades's result \cite{Rho}. Then $K_{\mu,\vec \lambda}(q)=q^2+q^4$. We will show that this is equal to the graded multiplicity of the trivial representation in
\[
  V(1)\star V(1)\star V(1) \star V(1)
\]
Let $\{v_+,v_-\}$ be the standard basis of $V(1)\cong \mathbb{C}^2$, so that $f(v_+)=v_-$. To compute the fusion product, we will use the filtration coming from the highest weight vector \mbox{$v_+\otimes v_+ \otimes v_+ \otimes v_+$} and the vector $z=(0,1,2,3)$. We will use the shorthand $v_{pqrs}$ to mean the vector $v_p\otimes v_q\otimes v_r\otimes v_s$, so the highest weight vector is $v_{++++}$.

%Using standard $\mathfrak{sl}_2$-representation theory, we compute
% \[
%   V(1)\otimes V(1)\otimes V(1) \otimes V(1) \cong V(4)\oplus V(2)^{\oplus 3} \oplus V(0)^{\oplus 2}.
% \]
Consider the $0$-weight space of the tensor product $V(1)^{\otimes 4}$. It is $6$-dimensional, spanned by the vectors
\[
  \mathcal{B}=\left\{ v_{--++}, v_{-+-+}, v_{-++-}, v_{+--+}, v_{+-+-}, v_{++--}  \right\}.
\]
To find the space of invariants, we are looking for vectors killed by $e\in \mathfrak{sl}_2$. The two-dimensional invariant space is spanned by the vectors (expressed in the basis $\mathcal{B}$)
\[
  \begin{bmatrix}
    1 & 0 & -1 & -1 & 0 & 1
  \end{bmatrix}
  \text{ and }
  \begin{bmatrix}
    0 & 1 & -1 & -1 & 1 & 0
  \end{bmatrix}.
\]
To reach the $0$-weight space from the high weight vector $v_{++++}$, we will need to apply elements of the form $(p(t)f)$ twice. We compute the images of these maps explicitly, and express them in the $\mathcal{B}$-basis.
\begin{align*}
  (f)(f)(v_{++++}) &=
                     \begin{bmatrix}
                       2 & 2 & 2 & 2 & 2 & 2
                     \end{bmatrix}\\
  (tf)(f)(v_{++++})=(f)(tf)(v_{++++}) &=
                                        \begin{bmatrix}
                                          1 & 2 & 3 & 3 & 4 & 5
                                        \end{bmatrix}\\
  (t^2f)(f)(v_{++++})=(f)(t^2f)(v_{++++}) &=
                                            \begin{bmatrix}
                                              1 & 4 & 9 & 5 & 10 & 13
                                            \end{bmatrix}\\
  (tf)(tf)(v_{++++})&=
                      \begin{bmatrix}
                        0 & 0 & 0 & 4 & 6 & 12
                      \end{bmatrix}\\
  (t^3f)(f)(v_{++++})=(f)(t^3f)(v_{++++})&=
                                           \begin{bmatrix}
                                             1 & 8 & 27 & 9 & 28 & 35
                                           \end{bmatrix}\\
  (t^2f)(tf)(v_{++++})=(tf)(t^2f)(v_{++++})&=
                                             \begin{bmatrix}
                                               0 & 0 & 0 & 6 & 12 & 30
                                             \end{bmatrix}\\
  (t^3f)(tf)(v_{++++})=(tf)(t^3f)(v_{++++})&=
                                             \begin{bmatrix}
                                               0 & 0 & 0 & 10 & 30 & 78
                                             \end{bmatrix}\\
  (t^2f)(t^2f)(v_{++++})&=
                          \begin{bmatrix}
                            0 & 0 & 0 & 8 & 18 & 72
                          \end{bmatrix}                                                                    
\end{align*}
We see that the span of the vectors  $\left\{(f)(f)(v_{++++}),(tf)(f)(v_{++++}) \right\}$ does not intersect the invariant space, but in degree $2$ we have
\begin{align*}
  &11(f)(f)(v_{++++})-18(f)(tf)(v_{++++})+3(f)(t^2f)(v_{++++})+3(tf)(tf)(v_{++++})  \\
  =&\begin{bmatrix}
    7 & -2 & -5 & -5 & -2 & 7
  \end{bmatrix}
\end{align*}
which we see is in the invariant space. By direct computation, the intersection of $\mathcal{U}^{\leq 3}$ with the invariant space is still $1$-dimensional, but notice that the set
\begin{align*}
  &\left\{ (f)(f)(v_{++++}),(tf)(f)(v_{++++}), (t^2f)(f)(v_{++++}),(tf)(tf)(v_{++++})  \right. \\ & \left.  (t^3f)(f)(v_{++++}), (t^3f)(tf)(v_{++++}) \right\}
\end{align*}
is linearly independent, therefore the other copy of the trivial representation is in degree $4$.

\bibliographystyle{alpha}
\bibliography{biblio}

\begin{thebibliography}{{The}16}

\bibitem[Akh17]{Akh}
Tair Akhmejanov.
\newblock Growth diagrams and minuscule polygon configurations in the affine
  {G}rassmannian.
\newblock {\em {\tt arXiv:1706.03157v3 [math.CO]}}, 2017.

\bibitem[AKS06]{AKS}
Eddy Ardonne, Rinat Kedem, and Michael Stone.
\newblock Fermionic characters and arbitrary highest-weight integrable
  {$\widehat{\mathfrak{sl}}_{r+1}$}-modules.
\newblock {\em Comm. Math. Phys.}, 264(2):427--464, 2006.

\bibitem[BD96]{BD}
A.~A. Beilinson and V.~G. Drinfeld.
\newblock Quantization of {H}itchin's fibration and {L}anglands' program.
\newblock In {\em Algebraic and geometric methods in mathematical physics
  ({K}aciveli, 1993)}, volume~19 of {\em Math. Phys. Stud.}, pages 3--7. Kluwer
  Acad. Publ., Dordrecht, 1996.

\bibitem[BGL20]{BGL20}
Pierre Baumann, St{\'e}phane Gaussent, and Peter Littelmann.
\newblock Bases of tensor products and geometric satake correspondence, 2020.

\bibitem[BK72]{BK}
Edward~A. Bender and Donald~E. Knuth.
\newblock Enumeration of plane partitions.
\newblock {\em J. Combinatorial Theory Ser. A}, 13:40--54, 1972.

\bibitem[FG06]{FG}
Vladimir Fock and Alexander Goncharov.
\newblock Moduli spaces of local systems and higher {T}eichm\"{u}ller theory.
\newblock {\em Publ. Math. Inst. Hautes \'{E}tudes Sci.}, 103(1):1--211, 2006.

\bibitem[FK14]{FK}
Bruce Fontaine and Joel Kamnitzer.
\newblock Cyclic sieving, rotation, and geometric representation theory.
\newblock {\em Selecta Math. (N.S.)}, 20(2):609--625, 2014.

\bibitem[FKK13]{FKK}
Bruce Fontaine, Joel Kamnitzer, and Greg Kuperberg.
\newblock Buildings, spiders, and geometric {S}atake.
\newblock {\em Compos. Math.}, 149(11):1871--1912, 2013.

\bibitem[FL99]{FL}
B.~Feigin and S.~Loktev.
\newblock On generalized {K}ostka polynomials and the quantum {V}erlinde rule.
\newblock In {\em Differential topology, infinite-dimensional {L}ie algebras,
  and applications}, volume 194 of {\em Amer. Math. Soc. Transl. Ser. 2}, pages
  61--79. Amer. Math. Soc., Providence, RI, 1999.

\bibitem[Fon12]{Fon}
Bruce Fontaine.
\newblock Generating basis webs for {${\rm SL_n}$}.
\newblock {\em Adv. Math.}, 229(5):2792--2817, 2012.

\bibitem[Gan80]{Gan}
Emden~R. Gansner.
\newblock On the equality of two plane partition correspondences.
\newblock {\em Discrete Math.}, 30(2):121--132, 1980.

\bibitem[Gin95]{Gin}
Victor Ginzburg.
\newblock Perverse sheaves on a loop group and langlands' duality.
\newblock {\em {\tt arXiv:alg-geom/9511007}}, 1995.

\bibitem[GS15]{GS}
A.~Goncharov and L.~Shen.
\newblock Geometry of canonical bases and mirror symmetry.
\newblock {\em Invent. Math.}, 202(2):487--633, 2015.

\bibitem[Hai92]{Haiman}
Mark~D. Haiman.
\newblock Dual equivalence with applications, including a conjecture of
  {P}roctor.
\newblock {\em Discrete Math.}, 99(1-3):79--113, 1992.

\bibitem[Hai03]{Hai1}
Thomas~J. Haines.
\newblock Structure constants for {H}ecke and representation rings.
\newblock {\em Int. Math. Res. Not.}, 2003(39):2103--2119, 01 2003.

\bibitem[Hai06]{Hai2}
Thomas~J. Haines.
\newblock Equidimensionality of convolution morphisms and applications to
  saturation problems.
\newblock {\em Adv. Math.}, 207(1):297--327, 2006.

\bibitem[HK06]{HK}
Andr\'{e} Henriques and Joel Kamnitzer.
\newblock Crystals and coboundary categories.
\newblock {\em Duke Math. J.}, 132(2):191--216, 2006.

\bibitem[HS15]{HS}
Jiuzu Hong and Linhui Shen.
\newblock Tensor invariants, saturation problems, and {D}ynkin automorphisms.
\newblock {\em Adv. Math.}, 285:629--657, 2015.

\bibitem[Kam07]{Kam}
J.~Kamnitzer.
\newblock Hives and the fibres of the convolution morphism.
\newblock {\em Selecta Math. (N.S.)}, 13(3):483--496, 2007.

\bibitem[KS02]{KS}
Anatol~N. Kirillov and Mark Shimozono.
\newblock A generalization of the {K}ostka-{F}oulkes polynomials.
\newblock {\em J. Algebraic Combin.}, 15(1):27--69, 2002.

\bibitem[KT99]{KT}
A.~Knutson and T.~Tao.
\newblock The honeycomb model of {GL}$_n(\mathbb{C})$ tensor products {I}:
  Proof of the saturation conjecture.
\newblock {\em J. Amer. Math. Soc.}, 12(4):1055--1090, 1999.

\bibitem[Le16]{Le1}
Ian Le.
\newblock Higher laminations and affine buildings.
\newblock {\em Geom. Topol.}, 20(3):1673--1735, 2016.

\bibitem[Le17]{Le2}
Ian Le.
\newblock {Intersection Pairings for Higher Laminations}.
\newblock {\em arXiv e-prints}, page arXiv:1708.00780, July 2017.

\bibitem[Len08]{Len}
Cristian Lenart.
\newblock On the combinatorics of crystal graphs. {II}. {T}he crystal commutor.
\newblock {\em Proc. Amer. Math. Soc.}, 136(3):825--837, 2008.

\bibitem[Lus83]{Lus}
G.~Lusztig.
\newblock Singularities, character formulas, and a {$q$}-analog of weight
  multiplicities.
\newblock In {\em Analysis and topology on singular spaces, {II}, {III}
  ({L}uminy, 1981)}, volume 101 of {\em Ast\'erisque}, pages 208--229. Soc.
  Math. France, Paris, 1983.

\bibitem[MV07]{MV}
I.~Mirkovi\'c and K.~Vilonen.
\newblock Geometric {L}anglands duality and representations of algebraic groups
  over commutative rings.
\newblock {\em Ann. of Math. (2)}, 166(1):95--143, 2007.

\bibitem[Rho08]{Rho}
Brendon Rhoades.
\newblock Cyclic sieving, promotion, and representation theory.
\newblock In {\em 20th {A}nnual {I}nternational {C}onference on {F}ormal
  {P}ower {S}eries and {A}lgebraic {C}ombinatorics ({FPSAC} 2008)}, Discrete
  Math. Theor. Comput. Sci. Proc., AJ, pages 393--409. Assoc. Discrete Math.
  Theor. Comput. Sci., Nancy, 2008.

\bibitem[RSW04]{RSW}
V.~Reiner, D.~Stanton, and D.~White.
\newblock The cyclic sieving phenomenon.
\newblock {\em J. Combin. Theory Ser. A}, 108(1):17--50, 2004.

\bibitem[Sag11]{Sag}
Bruce~E. Sagan.
\newblock The cyclic sieving phenomenon: a survey.
\newblock In {\em Surveys in combinatorics 2011}, volume 392 of {\em London
  Math. Soc. Lecture Note Ser.}, pages 183--233. Cambridge Univ. Press,
  Cambridge, 2011.

\bibitem[SS01]{SS}
Anne Schilling and Mark Shimozono.
\newblock Fermionic formulas for level-restricted generalized {K}ostka
  polynomials and coset branching functions.
\newblock {\em Comm. Math. Phys.}, 220(1):105--164, 2001.

\bibitem[Sta09]{Sta}
Richard~P. Stanley.
\newblock Promotion and evacuation.
\newblock {\em Electron. J. Combin.}, 16(2, Special volume in honor of Anders
  Bj\"{o}rner):Research Paper 9, 24, 2009.

\bibitem[Ste02]{Ste}
John~R. Stembridge.
\newblock A concise proof of the {L}ittlewood-{R}ichardson rule.
\newblock {\em Electron. J. Combin.}, 9(1):Note 5, 4, 2002.

\bibitem[SW99]{SW}
Anne Schilling and S.~Ole Warnaar.
\newblock Inhomogeneous lattice paths, generalized {K}ostka polynomials and
  {$A_{n-1}$} supernomials.
\newblock {\em Comm. Math. Phys.}, 202(2):359--401, 1999.

\bibitem[Sage]{SageMath}
{The Sage Developers}.
\newblock {\em {S}ageMath, the {S}age {M}athematics {S}oftware {S}ystem
  ({V}ersion 7.4)}, 2016.
\newblock {\tt https://www.sagemath.org}.

\bibitem[vL98]{vLee}
Marc A.~A. van Leeuwen.
\newblock An analogue of jeu de taquin for {L}ittelmann's crystal paths.
\newblock {\em S{\'e}m. Lothar. Combin.}, 41:Art.\ B41b, 23 pp.\, 1998.

\bibitem[Wes16]{Wes}
Bruce~W. Westbury.
\newblock Invariant tensors and the cyclic sieving phenomenon.
\newblock {\em Electron. J. Combin.}, 23(4):Paper 4.25, 40, 2016.

\bibitem[Wes18]{WesLoc}
Bruce~W. Westbury.
\newblock Coboundary categories and local rules.
\newblock {\em Electron. J. Combin.}, 25(4):Paper No. 4.9, 22, 2018.

\end{thebibliography}
 
\end{document}